\theoremstyle{plain}
\newtheorem{theorem}{Theorem}[section]
\newtheorem{proposition}[theorem]{Proposition}
\newtheorem{lemma}[theorem]{Lemma}
\newtheorem{corollary}[theorem]{Corollary}
\newtheorem{assumption}[theorem]{Assumption}
\theoremstyle{definition}
\newtheorem{definition}[theorem]{Definition}
\newtheorem{notation}[theorem]{Notation}
\theoremstyle{remark}
\newtheorem{remark}[theorem]{Remark}
\newcommand{\sheaf}[1]{\mathscr{#1}}
\newcommand{\PP}{\sheaf{P}}
\newcommand{\LL}{\sheaf{L}}
\newcommand{\OO}{\sheaf{O}}
\newcommand{\EE}{\sheaf{E}}
\newcommand{\FF}{\sheaf{F}}
\newcommand{\GG}{\sheaf{G}}
\newcommand{\II}{\sheaf{I}}
\newcommand{\HH}{\sheaf{H}}
\newcommand{\NN}{\sheaf{N}}
\newcommand{\ZZ}{\sheaf{Z}}
\newcommand{\WW}{\sheaf{W}}
\newcommand{\BB}{\sheaf{B}}
\newcommand{\CC}{\sheaf{C}}
\newcommand{\YY}{\sheaf{Y}}
\newcommand{\XX}{\sheaf{X}}
\newcommand{\sA}{\sheaf{A}}
\newcommand{\UU}{\sheaf{U}}
\newcommand{\F}{\mathbb F}
\newcommand{\Z}{\mathbb Z}
\renewcommand{\P}{\mathbb P}
\DeclareMathOperator{\grad}{grad}
\begin{document}

\title[Explicit bounds on common projective torsion points]{Explicit bounds on common projective torsion points of elliptic curves}

\author[B\"ohning]{Christian B\"ohning}
\address{Christian B\"ohning, Mathematics Institute, University of Warwick\\
Coventry CV4 7AL, England}
\email{C.Boehning@warwick.ac.uk}

\author[Bothmer]{Hans-Christian Graf von Bothmer}
\address{Hans-Christian Graf von Bothmer, Fachbereich Mathematik der Universit\"at Hamburg\\
Bundesstra\ss e 55\\
20146 Hamburg, Germany}
\email{hans.christian.v.bothmer@uni-hamburg.de}

\author[Hubbard]{David Hubbard}
\address{David Hubbard, Mathematics Institute, University of Warwick\\
Coventry CV4 7AL, England}
\email{David.Hubbard@warwick.ac.uk}


\begin{abstract}
Suppose $E_1, E_2$ are elliptic curves (over the complex numbers) together with double coverings $\pi_i \colon E_i \to \P^1$ ramified in the two-torsion points of $E_i$. Let $E_i[\infty]$ be the torsion points on $E_i$. In \cite{BFT18}, Bogomolov, Fu and Tschinkel ask if the number of points in $\pi_1 (E_1[\infty]) \cap \pi_2 (E_2[\infty])$ is uniformly bounded in the case when the branch loci of the $\pi_i$ do not coincide.  Very recently this was answered affirmatively \cite{DKY20, Kueh21, Gao21, DGH21, GGK21} and also \cite{Poi22-1, Poi22-2}, but realistic effective bounds are unknown. 

In this article we obtain such bounds for common projective torsion points on elliptic curves under some mild extra assumptions on the reduction type of the input data at given primes. The method is based on Raynaud's original groundbreaking work on the Manin-Mumford conjecture \cite{Ray83-1, Ray83-2}. In particular, we generalise several of his results to cases of bad reduction using techniques from logarithmic algebraic geometry. 
\end{abstract}

\maketitle

\tableofcontents

\section{Introduction and basic setup}\label{sIntroduction}

The following question in the theory of unlikely intersections, which was raised in \cite{BFT18} and is closely related to the uniform Manin-Mumford conjecture, has recently attracted a lot of attention: suppose $E_i$, $i=1,2$ are elliptic curves over the complex numbers (=one-dimensional abelian varieties), together with \emph{standard projections} to $\P^1$, $\pi_i \colon E_i \to \P^1$. Here and in the following by a standard projection we will mean a degree $2$ morphism $\pi_i \colon E_i \to \P^1$ that identifies each point on $E_i$ with its inverse, hence is ramified in the four $2$-torsion points $E_i[2]$ of $E_i$. Suppose furthermore that the branch points in $\P^1$ of these two double covers do not coincide as subsets of points in $\P^1$. Write $E_i[\infty]$ for the torsion points on $E_i$ (of arbitrary order). What is the smallest $C$ such that under these hypotheses one can conclude
\[
\left| \pi_1(E_1[\infty]) \cap \pi_2 (E_2[\infty]) \right| \le C \quad ?
\]
It is not too difficult to deduce that for any given $(E_i, \pi_i)$, the set $\pi_1(E_1[\infty]) \cap \pi_2 (E_2[\infty])$ is finite. In fact, this already follows from Raynaud's result \cite{Ray83-1} that the torsion points of a complex abelian variety $A$ that lie on some curve $C\subset A$ that is not elliptic are finite in number. Indeed, consider the four to one covering
\[
\pi_1\times \pi_2 \colon E_1 \times E_2 \to \P^1 \times \P^1
\]
and the preimage of the diagonal 
\[
X = (\pi_1\times \pi_2)^{-1} (\Delta ). 
\]
Then it is easy to see that under the assumption that the sets of branch points of $\pi_1, \pi_2$ do not coincide, this curve is irreducible and not elliptic. 

\medskip

Recently, several authors \cite{DKY20, Kueh21, Gao21, DGH21, GGK21} finally managed to show, as a corollary of their work, that one can choose one constant $C$ that works for all pairs $(E_i, \pi_i)$ above at once (i.e., \emph{uniformity} holds). Poineau in \cite{Poi22-1, Poi22-2} also proved this using a different technique using Berkovich spaces over the integers and dynamics of Latt\`{e}s maps. 

To the best of our knowledge, these approaches have so far failed to determine the minimal possible $C$ above and not yielded \emph{effective realistic} bounds. However, one knows pairs $(E_i, \pi_i)$ where $|\pi_1(E_1[\infty]) \cap \pi_2 (E_2[\infty])|$ is comparatively large \cite{BF17,FS19}. The current record (in \cite{FS19}), as far as we are aware, is $34$. 

\medskip

In this work, we propose to obtain such effective realistic bounds for the common torsion points $\pi_1(E_1[\infty]) \cap \pi_2 (E_2[\infty])$, or some large subset of this set, under some mild extra assumptions on the curves $E_i$, taking our point of departure from the methods used by Raynaud's in \cite{Ray83-1}, \cite{Ray83-2}. 

In particular, we generalise several arguments by Raynaud to the log smooth setting. 


\medskip

The road map of the paper is as follows: in Section \ref{sGoodReduction} we obtain explicit bounds on common projective torsion points of order coprime to $p$ for two elliptic curves together with standard projections that have good reduction at a given place of some number field lying over a given prime $p$. This is the content of Theorem \ref{tCoarseBounds}. 

We refine these bounds in Section \ref{sRefinements} in Proposition \ref{pBothSupersingular} and Proposition \ref{pRefinementFrobLiftability}. 

In Section \ref{pBoundsTwoPrimes} we show how one can obtain explicit bounds on common projective torsion points for curves with good reduction at two given primes. 

In Section \ref{sGoodMult}, we generalise the preceding results to the case when one or two of the elliptic curves are allowed to have bad multiplicative reduction at a given place. This is done in Theorem \ref{tMainMixedReductionTypes}. This is valid under Assumption \ref{aGoodBadReduction}. Part b) of that Assumption is less geometric, but we expect it to be implied by part a). In fact, we show that this is true in special cases in Theorem \ref{tFinitenessGeneralised}. The proof is longer and occupies the remaining sections of the paper. It involves ideas from logarithmic algebraic geometry and generalises an argument in \cite{Ray83-2}. 

\begin{remark}\label{rReductionToNumberFields}
To obtain effective bounds of the type mentioned above, it is no essential restriction to assume that both $E_1$ and $E_2$ are defined over a number field; indeed, if $E_i, \pi_i$ are initially defined over $\mathbb{C}$, there exists a $\mathbb{Z}$-algebra $A$ of finite type contained in $\mathbb{C}$ such that all these data are already defined over $S =\mathrm{Spec}\, (A)$. Replacing $S$ by some nonempty open subset of necessary, we can assume that there are 
\begin{enumerate}
\item
One-dimensional abelian schemes $\EE_i \to S$ with geometric generic fibres the $E_1$, with morphisms $\pi_{i, S} \colon \EE_i \to \P^1_S$ that are standard projections on each geometric fibre and the given standard projections on the geometric generic fibres.
\item
The scheme $\XX = (\pi_{1, S} \times \pi_{2, S} )^{-1} (\Delta_{\P^1_S \times_S \P^1_S} )$ is a proper flat $S$-curve with geometric generic fibre $X$. 
\item
For each geometric fibre, the set of common branch points of the standard projections has the same cardinality as on the geometric generic fibre. 
\end{enumerate}
Let $s$ be a closed point of $S$ lying above the generic point of $\mathrm{Spec}\, \Z$. The number of torsion points of the geometric generic fibre $E_1\times E_2$ that are contained in $X$ specialise injectively (since we are in equal characteristic zero) to torsion points of $\EE_{1, s}\times \EE_{2,s}$ lying on $\XX_s$. In any case, if $t_{E_1\times E_2, X}$ denotes the number of torsion points of $E_1\times E_2$ that are contained in $X$, then 
\[
\left| \pi_1(E_1[\infty]) \cap \pi_2 (E_2[\infty]) \right| \le \frac{t_{E_1\times E_2, X}}{4} + 8
\]
(since the covering $\pi_1\times \pi_2 \colon X  \to \Delta \simeq \P^1$ is \'{e}tale of degree $4$ away from the points that coincide with one of the branch points of $\pi_1$ or $\pi_2$, which are at most eight). Thus a bound on the number of torsion points of $\EE_{1, s}\times \EE_{2,s}$ lying on $\XX_s$ will in general yield a very good bound for our original problem. 
\end{remark}

In view of the preceding remark, we usually assume in the sequel that the data $E_i, \pi_i$ is defined over some number field $K$, with ring of integers $\mathcal{O}_K$. In that case, using the same spread construction as in Remark \ref{rReductionToNumberFields}, we can assume that $E_i$ extend to abelian $U$-schemes for some nonempty open subset $U\subset \mathrm{Spec}\, \mathcal{O}_K$, and the standard projections $\pi_i$ extend to $U$-morphisms that induce standard projections on every geometric fibre, with the number of common branch points of the standard projections being constant in the family. Moreover, we can assume $U$ is unramified over $\mathrm{Spec}\, \mathbb{Z}$. We can then choose a closed point $v$ of $U$ lying over a prime $p$, and identifying $v$ with the corresponding extension of the $p$-adic valuation to $K$, we can pass to the completion of the maximal unramified extension
\[
\widehat{K_v^{\mathrm{ur}}}
\]
with valuation ring $R \supset \mathcal{O}_{K, v}$ isomorphic to the ring of Witt vectors $W(\overline{\mathbb{F}}_p)$ with coefficients in the algebraic closure of the finite field $\mathbb{F}_p$. This shows that there are always plenty of prime numbers $p$ satisfying the following assumptions. 

\begin{assumption}\label{aBasic}
There exists a prime $p$ and a place $v$ of $K$ unramified over $p$ with the following properties. Let $R = W(\overline{\mathbb{F}}_p)$ be the Witt vectors over $k:=\overline{\mathbb{F}}_p$ with fraction field $F= \widehat{K^{\mathrm{ur}}_v} \supset K$. 
\begin{enumerate}
\item
There are abelian schemes 
\[
 \EE_i \to \mathrm{Spec}\, R , \quad i=1, 2
\]
with geometric generic fibres equal to (the base change to $F$) of the given elliptic curves $E_i$ defined over $K$. 
\item
For $i=1, 2$, there are $R$-morphisms
\[
\xymatrix{
\EE_i  \ar[rd]  \ar[rr]^{\pi_{i, R}} &  & \P^1_R \ar[ld] \\
 & \mathrm{Spec}\, R & 
}
\]
inducing standard projections on every geometric fibre and the given standard projections (base-changed to $F$) on the geometric generic fibre. Sometimes, by slight abuse of notation, we will also simply write $\pi_i$ for $\pi_{i, R}$ if there is no risk of confusion.
\item
The number of common branch points in $\P^1$ of the $\pi_{i, R}$ is the same on the special fibre as on the geometric generic fibre. 
We write 
\[
\XX = \left( \pi_{1, R} \times \pi_{2, R} \right)^{-1} \left( \Delta_{\P^1_R \times_R \P^1_R} \right) 
\]
for the preimage of the diagonal, which is a proper flat $R$-curve. 
\end{enumerate}
\end{assumption}

Of course, the prime numbers in question depend on the data $(E_i, \pi_i)$ and can be very large in special cases.

\section{Bounds on common torsion points if both curves have good reduction}\label{sGoodReduction}

Here we assume we are given elliptic curves $E_i$ and standard projections $\pi_i$, $i=1, 2$, defined over a number field $K$ satisfying Assumption \ref{aBasic} above, and we wish to show how a method pioneered by Raynaud in \cite{Ray83-1} yields very realistic bounds on 
\[
\left| \pi_1\left( E_1[\infty]^{(p')} \right) \cap \pi_2 \left( E_2[\infty]^{(p')} \right) \right| 
\]
where we denote by $E_i[\infty]^{(p')}$ the coprime to $p$ torsion on $E_i$. 

\medskip

We denote the abelian $R$-scheme $\EE_1 \times_{\mathrm{Spec}\, R}\EE_2$ by $\sA$. We denote its special fibre over $\mathrm{Spec}\, k$ by $\sA_0$. 

\begin{lemma}\label{lDefCoprimeTorsion}
All torsion points in $(E_1\times E_2)(\bar{K})$ of order not divisible by $p$ are defined over $K_v^{\mathrm{ur}}$, hence can be viewed as sections of $ \sA \to \mathrm{Spec}\, R$. 
Moreover, the reduction map $\sA (\mathrm{Spec}\, R) \to \sA_0(k)$ gives an isomorphism from the $n$-torsion points in $(E_1\times E_2)(\bar{K})$ onto the $n$-torsion points of $\sA_0(k)$ as long as $p$ does not divide $n$. 
\end{lemma}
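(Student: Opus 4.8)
The plan is to treat both assertions by reducing to the standard theory of $\ell$-divisible (Barsotti--Tate) groups and smooth group schemes over the complete discrete valuation ring $R=W(k)$. First I would observe that $\sA=\EE_1\times_R\EE_2$ is an abelian scheme over $\Spec R$, hence proper and smooth; in particular, for any $n$ with $p\nmid n$ the multiplication-by-$n$ map $[n]\colon\sA\to\sA$ is finite \'etale (its kernel $\sA[n]$ is finite flat of rank $n^{2\dim\sA}$ and, since $n$ is invertible in $R$, even \'etale over $\Spec R$). Therefore $\sA[n]$ is a finite \'etale $R$-scheme, and since $R$ is strictly henselian (its residue field $k=\overline{\F}_p$ is separably closed), every such scheme is a disjoint union of copies of $\Spec R$. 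Concretely this means every point of $\sA[n]$ is an $R$-valued point, i.e. lies in $\sA(R)$. This already gives the first claim: an $n$-torsion point of $(E_1\times E_2)(\bar K)$ with $p\nmid n$ is a $\bar K$-point of $\sA$ killed by $n$; by the valuative criterion of properness it extends uniquely to an element of $\sA(R)$, and by the above it is in fact unramified, i.e. defined over $K_v^{\ur}$ (equivalently over $\widehat{K_v^{\ur}}=F$, but being torsion it is already defined over the henselization).

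For the isomorphism statement, the key point is base change of finite \'etale schemes along $\Spec k\to\Spec R$. Since $p\nmid n$, both $\sA[n]\to\Spec R$ and its special fibre $\sA_0[n]\to\Spec k$ are finite \'etale of the same rank, and $\sA_0[n]=\sA[n]\times_R k$. The reduction map on sections is the map $\sA[n](R)\to\sA[n](k)=\sA_0[n](k)$; because $\sA[n]$ is a finite disjoint union of copies of $\Spec R$, both sides have exactly $n^{2\dim\sA}=n^4$ elements and the map is a bijection — this is just the fact that $\Hom_R(\Spec R,\coprod\Spec R)\to\Hom_k(\Spec k,\coprod\Spec k)$ is a bijection on connected components. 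It remains to note this bijection is a group homomorphism: the group structures on $\sA[n](R)$ and $\sA_0[n](k)$ are induced by the addition morphism of the abelian scheme, and reduction commutes with this morphism, so the bijection is an isomorphism of abelian groups. Combined with the first part (every $n$-torsion $\bar K$-point is an $R$-section) this identifies the $n$-torsion of $(E_1\times E_2)(\bar K)$ with the $n$-torsion of $\sA_0(k)$.

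The only genuine subtlety — and the step I would be most careful about — is the interaction between "$\bar K$-points" and "$R$-points": one must know that specialization of torsion is injective, i.e. that a nonzero prime-to-$p$ torsion point cannot reduce to the identity. This is exactly guaranteed by the \'etaleness of $\sA[n]$ over $R$ (the zero section and a nonzero section land in distinct connected components, which stay distinct on the special fibre), so no separate argument about formal groups or the kernel of reduction is needed in the prime-to-$p$ case; the formal-group phenomena only enter for the $p$-part, which is deliberately excluded here. I would phrase the whole argument as: $\sA[n]$ is finite \'etale over the strictly henselian ring $R$, hence constant, hence its $R$-points and $k$-points coincide compatibly with all structure — and then translate that one sentence into the two assertions of the lemma.
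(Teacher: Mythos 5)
Your proposal is correct and takes essentially the same route as the paper: the paper's one-line proof simply cites the key fact that $\sA[n]$ is finite \'etale over $R$ when $p\nmid n$ (Prop.~1.34 of Saito), from which the claims follow, and you have spelled out exactly the downstream deductions (strict henselianity of $R$ making the finite \'etale scheme constant, hence the bijection of $R$-points with $k$-points, compatibility with the group law, and the valuative criterion for extending torsion sections). No gap; you have just made explicit what the paper leaves implicit.
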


\begin{proof}
Indeed, given an abelian scheme over a discrete valuation ring of mixed characteristic $(0,p)$, the sub-group scheme of $n$-torsion points is finite and \'{e}tale over the base provided $p$ does not divide $n$ \cite[Prop. 1.34]{Sai13}. 
\end{proof}

\begin{lemma}\label{lInImageOfMultByP}
If $p$ does not divide $n$, every $n$-torsion point in $(E_1\times E_2)(\bar{K})$ can be written as $p$-times another such $n$-torsion point. Thus every section of $ \sA \to \mathrm{Spec}\, R$ corresponding to such a torsion point is in the image of another $R$-valued point in $\sA$ under the multiplication by $p$ map $[p] \colon \sA \to \sA$ on the abelian scheme $\sA$. 
\end{lemma}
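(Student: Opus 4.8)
The plan is to separate the two assertions. The statement about $n$-torsion points of $E_1 \times E_2$ is elementary group theory, and the statement about sections of $\sA \to \Spec R$ then follows formally once it is combined with Lemma~\ref{lDefCoprimeTorsion}.

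For the first part: since $\gcd(p,n) = 1$, choose an integer $q$ with $pq \equiv 1 \pmod n$. Given any $n$-torsion point $T$ of $E_1 \times E_2$, set $S := qT$. Then $S$ is again an $n$-torsion point, of the same order as $T$ because $q$ is a unit modulo $n$, and $pS = (pq)T = T$ since $n \mid pq - 1$ and $nT = 0$. Equivalently, multiplication by $p$ is an endomorphism of the finite group $(E_1 \times E_2)[n] \cong (\Z/n\Z)^4$ which is injective, hence bijective, whenever $\gcd(p,n) = 1$; surjectivity is precisely the assertion that every $n$-torsion point is $p$ times another such point.

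For the second part: by Lemma~\ref{lDefCoprimeTorsion} the points $T$ and $S$, having order prime to $p$, are defined over $K_v^{\mathrm{ur}}$ and extend to sections $\sigma_T, \sigma_S \colon \Spec R \to \sA$ (uniquely, since $\sA[n] \to \Spec R$ is finite; alternatively one can invoke the valuative criterion for the proper $R$-scheme $\sA$). As $[p] \colon \sA \to \sA$ is a morphism of $R$-schemes, $[p] \circ \sigma_S$ is again a section of $\sA \to \Spec R$; it coincides with $\sigma_T$ on the generic fibre by the first part, and because $\sA$ is separated over the integral base $\Spec R$, whose generic point is dense, the two sections agree on all of $\Spec R$. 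Alternatively, one observes that $[p]$ restricts to an automorphism of the finite \'{e}tale $R$-group scheme $\sA[n]$, with inverse $[q]$, and is therefore bijective on $R$-points. Either way, $\sigma_T = [p] \circ \sigma_S$, which is the claim. I do not anticipate a genuine obstacle here; the only point requiring care is the passage between $\bar{K}$-points, $\widehat{K_v^{\mathrm{ur}}}$-points and $R$-sections, which is exactly what Lemma~\ref{lDefCoprimeTorsion} together with separatedness of $\sA/R$ provides.
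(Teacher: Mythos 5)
Your argument is correct and is essentially the same as the paper's, which simply notes that multiplication by $p$ is an isomorphism on the $n$-torsion since $\gcd(p,n)=1$; your explicit choice of $q$ with $pq\equiv 1\pmod n$ is the standard way to exhibit the inverse. The second part, relating $\bar K$-points to $R$-sections via Lemma~\ref{lDefCoprimeTorsion} and separatedness of $\sA/R$, is also in line with what the paper leaves implicit.
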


\begin{proof}
This is simply because multiplication by $p$ is an isomorphism on $\Z/n\times \Z/n$. 
\end{proof}

Taken together, these two lemmas directly imply

\begin{proposition}\label{pSimpleBounds}
A bound on
\[
t (E_1, \pi_1, E_2, \pi_2, p'):= \left| \pi_1\left( E_1[\infty]^{(p')} \right) \cap \pi_2 \left( E_2[\infty]^{(p')} \right) \right| 
\]
is given by
\[
\frac{1}{4} \left| \mathrm{im}   \left( p \sA (R) \cap \XX (R) \to \XX_0(k) \right) \right|  + 8
\]
where the arrow in the displayed formula is the specialisation map and $\XX_0$ denotes the central fibre of the curve $\XX$. 

Moreover, putting $R_1= R/p^2$, and denoting by $\sA_1$, $\XX_1$ the base change of $\sA, \XX$ to $\mathrm{Spec}\, R_1$, a bound on $t (E_1, \pi_1, E_2, \pi_2, p')$ is also obtained by 
\[
\frac{1}{4} \left| \mathrm{im}   \left( p \sA_1 (R_1) \cap \XX_1 (R_1) \to \XX_0(k) \right) \right|  + 8
\]
\end{proposition}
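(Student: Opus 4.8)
The plan is to combine Lemmas~\ref{lDefCoprimeTorsion} and~\ref{lInImageOfMultByP} with the same elementary covering estimate already used in Remark~\ref{rReductionToNumberFields}, now applied on the central fibre. Write $X\subseteq E_1\times E_2$ for the preimage of $\Delta$ under $\pi_1\times\pi_2$ as in the introduction, and recall that $\pi_1\times\pi_2\colon X\to\Delta\cong\P^1$ is finite of degree $4$ and \'{e}tale away from the at most eight points of $\P^1$ coinciding with a branch point of $\pi_1$ or of $\pi_2$. If $y\in\pi_1(E_1[\infty]^{(p')})\cap\pi_2(E_2[\infty]^{(p')})$ is not one of these eight points, write $\pi_1^{-1}(y)=\{x_1,-x_1\}$ and $\pi_2^{-1}(y)=\{x_2,-x_2\}$ with $x_1,x_2$ prime-to-$p$ torsion (possible since $y$ lies in both images and each $\pi_i$ identifies a point with its inverse); then $(\pi_1\times\pi_2)^{-1}(y,y)$ is the set of four prime-to-$p$ torsion points $(\pm x_1,\pm x_2)$ of $E_1\times E_2$, all lying on $X$, and for distinct $y$ these quadruples are disjoint. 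Hence, writing $T_X$ for the number of torsion points of $E_1\times E_2$ of order prime to $p$ lying on $X$,
\[
t(E_1,\pi_1,E_2,\pi_2,p')\ \le\ \tfrac14\,T_X+8 ,
\]
and it suffices to bound $T_X$ by the size of the image appearing in the statement.

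To do this, I would check that each prime-to-$p$ torsion point $x$ on $X$ contributes a point of $p\sA(R)\cap\XX(R)$. By Lemma~\ref{lDefCoprimeTorsion}, $x$ is defined over $K^{\mathrm{ur}}_v\subseteq F$; since $\sA$ is proper over $R$, the corresponding $F$-point of $X$, viewed inside the generic fibre $\sA_F$ of $\sA$, extends uniquely to a section $s\in\sA(R)$ by the valuative criterion, and since $\XX$ is closed in $\sA$ and $s$ sends the generic point of $\Spec R$ into $X\subseteq\XX$, the section $s$ factors through $\XX$, i.e.\ $s\in\XX(R)$. By Lemma~\ref{lInImageOfMultByP}, $s$ lies in the image of $[p]\colon\sA(R)\to\sA(R)$, hence $s\in p\sA(R)\cap\XX(R)$. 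Finally, by Lemma~\ref{lDefCoprimeTorsion} the specialisation map $\sA(R)\to\sA_0(k)$ is injective on prime-to-$p$ torsion, so the $T_X$ torsion points on $X$ yield $T_X$ distinct points of $\XX_0(k)$, all lying in $\mathrm{im}\bigl(p\sA(R)\cap\XX(R)\to\XX_0(k)\bigr)$. This gives $T_X\le\bigl|\mathrm{im}\bigl(p\sA(R)\cap\XX(R)\to\XX_0(k)\bigr)\bigr|$, and the first bound follows.

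For the second bound it is enough to note that the specialisation $\Spec k\to\Spec R$ factors through $\Spec R_1$, and that $[p]$ and the closed immersion $\XX\hookrightarrow\sA$ are compatible with the base change $R\to R_1=R/p^2$; thus if $s\in p\sA(R)\cap\XX(R)$, its reduction $s_1\in\sA_1(R_1)$ again lies in $[p]\sA_1(R_1)\cap\XX_1(R_1)$. This yields
\[
\mathrm{im}\bigl(p\sA(R)\cap\XX(R)\to\XX_0(k)\bigr)\ \subseteq\ \mathrm{im}\bigl(p\sA_1(R_1)\cap\XX_1(R_1)\to\XX_0(k)\bigr),
\]
so the second expression is again an upper bound for $t$. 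Reducing only modulo $p$ would be useless here, since $[p]$ is surjective on the $k$-points of an abelian variety over an algebraically closed field; passing to $R/p^2$ is exactly what keeps the constraint $s_1\in[p]\sA_1(R_1)$ nontrivial.

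I do not expect a serious obstacle: the proposition is essentially a formal consequence of the two lemmas. The only points requiring a little care are the ``degree $4$ plus at most $8$'' bookkeeping for $X\to\Delta$, the use of properness of $\sA$ to spread an $F$-point of $X$ out to an $R$-point of $\XX$, and the (easy, and only needed) direction that membership in the image of $[p]$ survives reduction modulo $p^2$.
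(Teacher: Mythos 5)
Your proposal is correct and fills in exactly the details that the paper compresses into the single phrase ``taken together, these two lemmas directly imply'': the $4$-to-$1$ covering bookkeeping (as in Remark~\ref{rReductionToNumberFields}), the extension of $F$-points of $X$ to $R$-points of $\XX$ via properness, the appeal to Lemmas~\ref{lDefCoprimeTorsion} and~\ref{lInImageOfMultByP}, and the observation that membership in $p\sA(R)\cap\XX(R)$ is preserved under reduction to $R_1$. This is the same approach as the paper, just spelled out.
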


To explain the key ideas in a simple context, in the sequel of this section we will, in addition to Assumption \ref{aBasic}, make the following 

\begin{assumption}\label{aSimplest}
The branch loci of the standard projections on the special fibres $(\pi_1)_k \colon \EE_{1}\times_{\mathrm{Spec}\, R} \mathrm{Spec} k \to \P^1_k$ and $(\pi_2)_k \colon \EE_{2}\times_{\mathrm{Spec}\, R} \mathrm{Spec} k \to \P^1_k$ are disjoint. (Hence the same is true for the generic fibres). 
\end{assumption}

This implies that $\XX$ is a smooth $R$-curve. 

\begin{theorem}\label{tCoarseBounds}
Let $(E_1, \pi_1)$, $(E_2, \pi_2)$ satisfy Assumptions \ref{aBasic} and \ref{aSimplest}. Then 
\[
t (E_1, \pi_1, E_2, \pi_2, p') \le 2p^3 + 8. 
\]
\end{theorem}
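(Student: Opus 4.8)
The plan is to feed the second estimate of Proposition~\ref{pSimpleBounds} with a bound on
\[
N \;:=\; \bigl|\,\im\bigl(p\sA_1(R_1)\cap\XX_1(R_1)\longrightarrow\XX_0(k)\bigr)\,\bigr|,
\]
namely $N\le 8p^3$, which then gives $t(E_1,\pi_1,E_2,\pi_2,p')\le\tfrac14 N+8\le 2p^3+8$. So everything reduces to analysing the multiplication‑by‑$p$ map on $\sA_1=\sA\times_R R_1$ over $R_1=R/p^2$ and its interaction with the lift $\XX_1\subset\sA_1$ of the smooth curve $\XX_0\subset\sA_0$.

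The first point is formal. The kernel of the reduction map $\rho\colon\sA_1(R_1)\to\sA_0(k)$ is canonically $\mathrm{Lie}(\sA_0)\otimes_k pR_1\cong\mathrm{Lie}(\sA_0)$; since $(pR_1)^2=0$, the group law on this kernel is the additive one, so $[p]_{\sA_1}$ acts on it by multiplication by $p$, i.e.\ by $0$. Hence $[p]_{\sA_1}$ factors through $\rho$, yielding a group homomorphism $\psi\colon\sA_0(k)\to\sA_1(R_1)$ with $\rho\circ\psi=[p]_{\sA_0}$ and $\im\psi=p\sA_1(R_1)$. Moreover $\psi$ annihilates $\sA_0[p](k)=\ker\bigl([p]_{\sA_0}\colon\sA_0(k)\to\sA_0(k)\bigr)$: a point of $\sA_0[p](k)$ lies in the \'etale part of $\sA_0[p]$, whose unique lift to the strictly henselian $R_1$ is an \'etale $p$‑torsion subgroup of $\sA_1$, so the canonical lift of such a point is $p$‑torsion and is killed by $[p]_{\sA_1}$. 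As $[p]_{\sA_0}$ is surjective on $k$‑points, $\psi$ therefore descends to a \emph{group‑theoretic section} $\Psi\colon\sA_0(k)\to\sA_1(R_1)$ of $\rho$ with $\im\Psi=p\sA_1(R_1)$. Since $\rho$ restricts to a bijection on $\im\Psi$, we get
\[
N=\bigl|\{\bar x\in\XX_0(k)\ :\ \Psi(\bar x)\in\XX_1(R_1)\}\bigr|,
\]
a ``canonical lifting'' condition in the style of Raynaud's reduction mod $p^2$.

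The heart of the proof is to bound this last set. For $\bar x\in\XX_0(k)$ the point $\Psi(\bar x)$ is one particular lift of $\bar x$ to an $R_1$‑point of $\sA_1$, and it lies on $\XX_1$ exactly when its ``displacement'' away from $\XX_1$ — a priori an element of $N_{\XX_0/\sA_0}|_{\bar x}\otimes pR_1$ — vanishes. The key computation, which combines the decomposition $[p]_{\sA_0}=V_0\circ F_0$ of multiplication by $p$ into Verschiebung $V_0$ and relative Frobenius $F_0$ with the explicit shape of the $p$‑multiplication in the formal group of $\sA_1$ modulo $p^2$, is that $\bar x\mapsto(\text{this displacement})$ is the evaluation of a \emph{nonzero} global section of an explicit line bundle on $\XX_0$, built from the conormal bundle of $\XX_0$ by a Frobenius twist. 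Equivalently, the set in question is the set of $k$‑points of the intersection in $\sA_0$ of $\XX_0$ with a suitable translate of a Frobenius (or $[p]$‑) pullback of $\XX_0$, and this intersection is proper because $\XX_0$ is not a translate of an abelian subvariety of $\sA_0$ (by adjunction on the abelian surface $\sA_0$ it has genus $5$, since $\XX_0\cdot\XX_0=\bigl((\pi_{1,R}\times\pi_{2,R})^*\OO(1,1)\bigr)^2=(2f_1+2f_2)^2=8$ where $f_i$ are the pullbacks of the two rulings). Counting by the corresponding intersection number then gives $N\le(\XX_0\cdot\XX_0)\cdot p^{\,3}=8p^3$, the factor $8$ coming from the self‑intersection just computed and the power of $p$ from the Frobenius pullback. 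Substituting into Proposition~\ref{pSimpleBounds} yields $t(E_1,\pi_1,E_2,\pi_2,p')\le\tfrac14\cdot 8p^3+8=2p^3+8$.

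I expect the genuine obstacle to be the penultimate step: identifying precisely which Frobenius twist of $\XX_0$ (equivalently which line bundle, and hence which power of $p$) governs the position of the canonical lift $\Psi(\bar x)$ relative to $\XX_1$, and verifying that the translation involved is generic enough that the resulting intersection is proper — i.e.\ that the ``displacement section'' is not identically zero, which is where the non‑degeneracy of $\XX_0$ (its being non‑elliptic, guaranteed under Assumption~\ref{aSimplest}) enters. Everything else — the factorisation of $[p]$ through reduction, the construction of $\Psi$, the computation $\XX_0\cdot\XX_0=8$, and the bookkeeping with Proposition~\ref{pSimpleBounds} — is routine.
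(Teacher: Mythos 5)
Your bookkeeping with Proposition~\ref{pSimpleBounds} and the computation $\XX_0\cdot\XX_0=8$ are both fine, and the initial factorisation giving $\psi\colon\sA_0(k)\to\sA_1(R_1)$ with $\rho\circ\psi=[p]_{\sA_0}$ and $\im\psi=p\sA_1(R_1)$ is correct. But the passage from $\psi$ to the ``section'' $\Psi$ contains a genuine error. You assert that $\psi$ annihilates $\sA_0[p](k)$ because ``the \'etale part of $\sA_0[p]$ lifts to an \'etale $p$-torsion subgroup of $\sA_1$''. That assertion is precisely the statement that the connected--\'etale exact sequence
\[
0\to\sA_1[p]^0\to\sA_1[p]\to\sA_1[p]^{\mathrm{\acute et}}\to 0
\]
splits over $R_1$, which is equivalent (see Lemma~\ref{lSplittingPTorsion}) to $\EE_1$ and $\EE_2$ being the Serre--Tate canonical lifts of their reductions. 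In general it fails: for an ordinary lift $\bar x\in\sA_0[p](k)$ and any lift $x\in\sA_1(R_1)$, the element $\psi(\bar x)=[p]_{\sA_1}(x)\in\ker\rho\cong\mathrm{Lie}(\sA_0)$ is exactly the Serre--Tate $q$-pairing obstruction and is nonzero unless the lift is canonical (indeed $H^1_{\mathrm{fppf}}(R_1,\bbmu_p)\cong R_1^\times/(R_1^\times)^p\cong k\neq 0$). Hence $\Psi$ is not well defined, $\rho$ does not restrict to a bijection on $p\sA_1(R_1)$, and your reformulation $N=|\{\bar x\in\XX_0(k):\Psi(\bar x)\in\XX_1(R_1)\}|$ is unavailable.

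The paper avoids this entirely by keeping $\psi$ and counting over the source of $[p]_{\sA_0}$ rather than its target. One introduces the reduced preimage $\YY_0=[p]_{\sA_0}^{-1}(\XX_0)_{\mathrm{red}}$, pushes its $k$-points via $\psi$ (followed by a normal-direction map $\tau$) into the affine bundle $V_0=\P(\NN_{\XX_0/\sA})\setminus\P(\NN_{\XX_0/\sA_0})$ over $\XX_0$, obtaining a projective curve $\YY_0'$, while $\XX_1(R_1)$ gives a second curve $\XX_0'$ in $V_0$. Both map to $\XX_0$ via $\varpi\colon V_0\to\XX_0$, and the points you want to count lie under $\XX_0'\cap\YY_0'$. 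The intersection theory on the compactified bundle gives $\XX_0'\cdot\YY_0'=\delta\cdot(\XX_0\cdot\XX_0)_{\sA_0}=8\delta$ with $\delta=\deg(\varpi|_{\YY_0'})\le\deg([p]|_{\YY_0})\le p^3$, the last bound because $[p]_{\sA_0}$ has degree $p^4$ but factors through the relative Frobenius and $\YY_0$ is defined as the \emph{reduced} preimage, killing one power of $p$. So the step you flag as ``the genuine obstacle'' — identifying the right intersection problem and the right power of $p$ — is in fact the entire content of the argument, and the step you call routine (the construction of $\Psi$) is where your version breaks down.
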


\begin{proof}
By Proposition \ref{pSimpleBounds}, it suffices to show 
\[
 \left| \mathrm{im}   \left( p \sA_1 (R_1) \cap \XX_1 (R_1) \to \XX_0(k) \right) \right| \le 8p^3.
 \]
 For this, it is convenient, following ideas in \cite{Ray83-1}, to pass to a structure defined over $k$ to encode information about first-order infinitesimal deformations. We recall how this is done, following  \cite{Ray83-1}: writing $\sA_1 (R_1, \XX_0)$ for the set of $R_1$-points of $\sA_1\to \mathrm{Spec}\, R_1$ that specialise to a point in $\XX_0$, we note that there is a factorisation 
\[
\xymatrix{
\sA_1 (R_1, \XX_0) \ar[r]^{\tau} \ar[rd]_{\mathrm{specialization}}  & V_0 (k)\ar[d]^{\varpi} \\
 & \XX_0(k)
}
\]
where $V_0 \to \XX_0$ is a certain \emph{affine bundle} over $X_0$, obtained as follows: consider the normal bundle $\NN_{\XX_0/\sA}$ with subbundle $\NN_{\XX_0/\sA_0}$ and form 
\[
V_0 = \P \bigl( \NN_{\XX_0/\sA} \bigr) \backslash \P \bigl( \NN_{\XX_0/\sA_0} \bigr)
\]
which is naturally an affine bundle over $\XX_0$. Since sections of $\sA_1\to \mathrm{Spec}\, R_1$ that specialise to a point $x$ in $\XX_0$ have a normal direction at $x$ that is not contained in $\AA_0$, we get a factorisation as claimed in the diagram above. Write $\XX'_0$ for the curve in $V_0$ whose $k$-points are the image of $\XX_1(R_1)$ under $\tau$. It lies isomorphically over $\XX_0$ via $\varpi$. Now if $f\colon \sA_1\to \sA_1$ is any $R_1$-morphism whose base change to the central fibre has zero differential (such as, for example, the multiplication by $p$ map), we get a factorization
\[
\xymatrix{
\sA_1(R_1) \ar[rr]^f \ar[rd] &  & \sA_1 (R_1)\\
 & \sA_0(k) \ar[ru]  & 
}
\]
Denote by $\YY_0 \subset \sA_0$ the \emph{reduced} preimage of $\XX_0$ under the multiplication by $p$ map on $\sA_0$. 
Then, in particular, all points in $\YY_0(k)$ give in this way unique points in $p \sA_1 (R_1)$ which we can specialise again to $V_0(k)$: it turns out, \cite[Prop. 3.3.1]{Ray83-1}, that the resulting set $\YY_0'(k) \subset A_0(k)$ is the set of $k$-points of another projective curve $\YY_0'\subset V_0$, and we are interested in computing the intersection number $\XX_0'.\YY_0'$ in $V_0$ (or better its compactification $\P \bigl( \NN_{\XX_0/\sA} \bigr) = \P \bigl( \NN_{\XX_0/\sA_0}\oplus \NN_{\XX_0/\XX}\bigr) = \P \bigl( \NN_{\XX_0/\sA_0}\oplus \OO_{\XX_0}\bigr)$). 
Indeed, $k$-points of $\XX_0'.\YY_0'$ map, by construction, surjectively onto the set $\mathrm{im}   \left( p \sA_1 (R_1) \cap \XX_1 (R_1) \to \XX_0(k) \right)$ whose cardinality we are trying to bound. 

Here it is also essential to notice that $\XX_0'\cap \YY_0'$ is actually finite under Assumption \ref{aSimplest}; indeed, under that assumption, $\XX_0'$ is irreducible of genus $5$, in particular, does not contain any elliptic component. Then the finiteness follows from \cite[proof of Thm. 4.4.1]{Ray83-1} as well as in this special case \cite{Ray83-2}. 

\medskip

The Picard group of the projective bundle $ \P \bigl( \NN_{\XX_0/\sA_0}\oplus \OO_{\XX_0}\bigr)$ can be generated by the zero section $\XX_0'=\P \bigl( \OO_{\XX_0}\bigr)$ and the class of a fibre, and since $\YY_0'$ is contained in the finite part (the complement of the infinity section), intersecting with the infinity section tells us that $\YY_0'$ is a multiple of $\XX_0'$. Moreover, intersecting with a fibre, we see that 
\[
\YY_0' \equiv \delta \XX_0'
\]
where $\delta$ is the degree of $\varpi \colon \YY_0' \to \XX_0$. Since the normal bundle of $\XX_0'$ in $\P \bigl( \NN_{\XX_0/\sA_0}\oplus \OO_{\XX_0}\bigr)$ is nothing but $\NN_{\XX_0/\sA_0}$ we get 
\[
\XX_0'.\YY_0' = \delta (\XX_0.\XX_0)_{\sA_0}= 8 \delta . 
\]
Recall that $\XX_0$ is the preimage in $\sA_0 \times \sA_0$ of the diagonal in $\P^1_k\times \P^1_k$ under a $4:1$ covering map, whence the factor $8$ in the preceding formula. 

\medskip

Thus to finish the proof we need to bound $\delta = \deg \varpi$, more precisely, we need to show 
\[
\delta \le p^3. 
\]
For this, remark that by construction there is a commutative diagram
\[
\xymatrix{
\YY_0 \ar[rd]_{(\cdot p)\mid_{\YY_0}} \ar[r]^{\theta} & \YY_0' \ar[d]^{\varpi}\\
 & \XX_0
}
\]
Thus $\delta$ is bounded from above by the degree of $(\cdot p)\mid_{\YY_0}$. We will show that this latter is less than or equal to $p^3$. Indeed, the multiplication by $p$-map on the abelian surface $\sA_0$ has degree $p^4$, but it factors over the relative Frobenius. Since $\YY_0$ is defined to be the reduced preimage of $\XX_0$ under this map, we get the desired bound. 
\end{proof}

\section{Refinements according to the reduction type and Frobenius liftability}\label{sRefinements}

In certain case, the bounds obtained in Theorem \ref{tCoarseBounds} can be substantially refined. First recall \cite[V.3 Theorem 3.1]{Sil09}

\begin{definition}\label{dSupersingularOrdinary}
Let $E_0$ be a curve over $k=\overline{\mathbb{F}}_p$, and denote by $E_0(k)[p]$ the group that is the kernel of the multiplication by $p$-map $E_0(k) \to E_0(k)$. Then $E_0$ is called \emph{ordinary} if $E_0(k)[p]\simeq \Z/p\Z$ and \emph{supersingular} if $E_0(k)[p]= \{0\}$. 
\end{definition}

If $E_0$ is an elliptic curve over $k$, then $E$ is ordinary if and only if one has a factorisation 
\[
\xymatrix{
E \ar[rd]^{\mathrm{Fr}} \ar[rr]^{\cdot p} & & E \\
 & E' \ar[ru]^{g} & 
 }
\]
where $\mathrm{Fr}$ is the relative Frobenius of degree $p$, $E'$ the Frobenius twist of $E$, and $g$ is \'{e}tale of degree $p$. An elliptic curve $E_0$ as above is supersingular if and only if the multiplication by $p$ map factors as 
\[
\xymatrix{
E \ar[rd]^{\mathrm{Fr}^2} \ar[rr]^{\cdot p} & & E \\
 & E'' \ar[ru]^{g} & 
 }
\]
and $g$ is an isomorphism. 

\begin{proposition}\label{pBothSupersingular}
Keeping all the assumptions of Theorem \ref{tCoarseBounds} and assuming in addition that the reductions $\EE_{1, 0}$ and $\EE_{2, 0}$ of the curves $E_1$ and $E_2$ are both supersingular, we have 
\[
t (E_1, \pi_1, E_2, \pi_2, p') \le 2p^2 + 8. 
\]
\end{proposition}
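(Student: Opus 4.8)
The plan is to run the proof of Theorem \ref{tCoarseBounds} verbatim up to the point where one must bound the integer $\delta=\deg(\varpi\colon\YY_0'\to\XX_0)$, and then to show that the supersingularity hypothesis forces $\delta\le p^2$ instead of merely $\delta\le p^3$. Granting this, the intersection computation $\XX_0'.\YY_0'=\delta\,(\XX_0.\XX_0)_{\sA_0}=8\delta$ and Proposition \ref{pSimpleBounds} give at once
\[
t(E_1,\pi_1,E_2,\pi_2,p')\le \tfrac14\cdot 8p^2+8 = 2p^2+8.
\]
The finiteness of $\XX_0'\cap\YY_0'$ needed for this intersection number to be meaningful is unchanged: it was already established under Assumption \ref{aSimplest} in the proof of Theorem \ref{tCoarseBounds}. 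As there, $\delta$ is bounded above by the degree of $(\cdot p)|_{\YY_0}\colon\YY_0\to\XX_0$, where $\YY_0\subset\sA_0$ is the reduced preimage of $\XX_0$ under $[p]$, so it suffices to prove $\deg\bigl((\cdot p)|_{\YY_0}\bigr)\le p^2$.

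The next step is to invoke the structural description recalled before the statement. Since $\EE_{1,0}$ and $\EE_{2,0}$ are supersingular, for $i=1,2$ we have $[p]_{\EE_{i,0}}=g_i\circ\mathrm{Fr}^2_{\EE_{i,0}}$ with $g_i$ an isomorphism and $\mathrm{Fr}$ the relative Frobenius. Because multiplication by $p$ on a product is the product of the multiplications by $p$, and because the relative Frobenius over the perfect field $k$ is compatible with products, so that $\mathrm{Fr}^2_{\sA_0}=\mathrm{Fr}^2_{\EE_{1,0}}\times\mathrm{Fr}^2_{\EE_{2,0}}$, we obtain
\[
[p]_{\sA_0} \;=\; (g_1\times g_2)\circ\mathrm{Fr}^2_{\sA_0},
\]
with $g_1\times g_2$ an isomorphism. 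In particular $[p]_{\sA_0}$ is purely inseparable, and the reduced preimage $\YY_0=([p]^{-1}\XX_0)_{\mathrm{red}}$ is, via the isomorphism $g_1\times g_2$, identified with $(\mathrm{Fr}^{-2}\XX_0'')_{\mathrm{red}}$, where $\XX_0'':=(g_1\times g_2)^{-1}\XX_0\cong\XX_0$. Thus $\deg\bigl((\cdot p)|_{\YY_0}\bigr)$ equals the degree of the projection $(\mathrm{Fr}^{-2}\XX_0'')_{\mathrm{red}}\to\XX_0''$ induced by $\mathrm{Fr}^2_{\sA_0}$.

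Finally I would compute this last degree by writing $\mathrm{Fr}^2_{\sA_0}$ as a composite of two relative Frobenii of smooth surfaces. In local coordinates, the reduced preimage of a reduced curve under a single relative Frobenius of a smooth surface over $k$ is again a curve mapping to it with (purely inseparable) degree $p$ — concretely $f(x^p,y^p)=h(x,y)^p$ for the twisted local equation $h$, using that $k=\overline{\mathbb F}_p$ is perfect. Iterating once more yields $\deg\bigl((\mathrm{Fr}^{-2}\XX_0'')_{\mathrm{red}}\to\XX_0''\bigr)=p^2$, hence $\delta\le p^2$, and the bound follows. The only point demanding genuine care is this degree bookkeeping for the iterated relative Frobenius pullback together with the passage to reduced structures; the rest is a formal specialization of the argument of Theorem \ref{tCoarseBounds} (and, pleasantly, here $\YY_0$ is automatically irreducible, since $[p]_{\sA_0}$ is a homeomorphism and $\XX_0\cong\XX_0'$ is irreducible of genus $5$).
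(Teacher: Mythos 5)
Your proposal is correct and follows essentially the same route as the paper: both reduce to showing $\delta \le p^2$ by observing that supersingularity makes $[p]_{\sA_0}$ factor as an isomorphism composed with $\mathrm{Fr}^2$, so that $\YY_0$ is the second Frobenius (un)twist of $\XX_0$ and the map $(\cdot p)|_{\YY_0}$ has degree $p^2$. The paper states this more tersely, while you spell out the local equation bookkeeping $f(x^p,y^p)=h(x,y)^p$ for the reduced Frobenius preimage and the automatic irreducibility of $\YY_0$ from $[p]_{\sA_0}$ being a homeomorphism, but the substance is identical.
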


\begin{proof}
Indeed, this will follow if we can show that the quantity $\delta$ appearing in the proof of Theorem \ref{tCoarseBounds} is bounded by $p^2$ in this case. It suffices to show that this is so for the degree of $(\cdot p)\mid_{\YY_0}\colon \YY_0 \to \XX_0$. In this case, by definition, $\YY_0$ is isomorphic to the second Frobenius twist of $\XX_0$ and $(\cdot p)\mid_{\YY_0}$ the second power of Frobenius, so it has degree $p^2$.
\end{proof}

As the previous proof illustrates, improving the bounds is closely connected to improving the bounds on $\delta = \deg \varpi$. Under certain conditions, one can get such better bounds also in the case when $\sA_0$ is ordinary. So we will now consider the case when $E_1, E_2$ have good ordinary reduction. 

The idea is to look at the connected-\'{e}tale exact sequence for the finite flat group scheme of $p$-torsion points $\GG_p$ on $\sA_1\to \mathrm{Spec}\, R_1$
\[
\xymatrix{
0 \ar[r] & \GG_p^0 \ar[r] & \GG_p \ar[r] & \GG_p^{et} \ar[r] & 0
}
\]
(here $\GG_p^0 \simeq \bbmu_p\times \bbmu_p$ and $\GG_p^{et} \simeq (\Z/p\Z)^2$ under the assumption that both $\EE_{i, 0}=\EE_i\times_{\mathrm{Spec}\, R} \mathrm{Spec}(k)$ are ordinary elliptic curves). If $\HH\subset \GG_p^{et}$ is an \'{e}tale subgroup-scheme over which the preceding sequence splits, i.e. if there exists a subgroup-scheme $\widetilde{\HH}$ of $\GG_p $ mapping isomorphically onto $\HH$, then the multiplication by $p$-map factors
\[
\xymatrix{
\sA_1 \ar[rr]_{\cdot p} \ar[rd]^q & & \sA_1\\
 & \BB=\sA_1/\widetilde{\HH} \ar[ru]^r & 
}
\]
where $q$ is \'{e}tale and $r$ restricted to the central fibre has differential zero, whence letting $\ZZ_0$ be the reduced preimage of $\XX_0$ under the map
\[
\BB_0 = \BB\times_{\mathrm{Spec}\, R_1} \mathrm{Spec}\, k \to \sA_1 \times_{\mathrm{Spec}\, R_1} \mathrm{Spec}\, k =\sA_0
\]
we get a factorisation of $\theta= \theta''\circ \theta'$
\[
\xymatrix{
\YY_0 \ar[rrd]_{(\cdot p)\mid_{\YY_0}} \ar[r]^{\theta'} & \ZZ_0 \ar[r]^{\theta''} \ar[rd]& \YY_0' \ar[d]^{\varpi}\\
 & & \XX_0 
}
\]
Here $\theta'$ has degree equal to the degree/order of $\widetilde{\HH}$. Thus, in this case, $\delta$, the degree of $\varpi$ is bounded by 
\[
\frac{p^3}{| \widetilde{\HH} |}. 
\]

\begin{proposition}\label{pRefinementFrobLiftability}
Keep the assumptions of Theorem \ref{tCoarseBounds} and assume in addition that the reductions $\EE_{1, 0}$ and $\EE_{2, 0}$ of the curves $E_1$ and $E_2$ are both ordinary. 
Suppose that the connected-\'{e}tale exact sequence
\[
\xymatrix{
0 \ar[r] & \bbmu_p \ar[r] & (\EE_i \times_{\mathrm{Spec}\, R} \mathrm{Spec}\, R_1 )_p \ar[r] & \Z/p\Z \ar[r] & 0
}
\]
for the finite flat group scheme of $p$-torsion points $(\EE_i\times_{\mathrm{Spec}\, R} \mathrm{Spec}\, R_1)_p$ splits for \emph{one of the curves} $\EE_i$. Then 
\[
t (E_1, \pi_1, E_2, \pi_2, p') \le 2p^2 + 8. 
\]
If this sequence splits for \emph{both curves} we have 
\[
t (E_1, \pi_1, E_2, \pi_2, p') \le 2p + 8. 
\]
\end{proposition}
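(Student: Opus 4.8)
The plan is to run the argument of Theorem~\ref{tCoarseBounds} once more, replacing the crude bound $\delta\le p^3$ on $\delta=\deg\varpi$ by the sharper estimate furnished by the connected-\'etale splitting, as prepared in the discussion immediately preceding the statement. Recall from there that if $\widetilde{\HH}\subset\GG_p$ is an \'etale subgroup-scheme mapping isomorphically onto a subgroup $\HH\subset\GG_p^{et}$ over which the connected-\'etale sequence of $\GG_p$ splits, then $[p]\colon\sA_1\to\sA_1$ factors as $\sA_1\xrightarrow{q}\BB=\sA_1/\widetilde{\HH}\xrightarrow{r}\sA_1$ with $q$ \'etale of degree $|\widetilde{\HH}|$ and $r$ whose restriction to the central fibre has zero differential, so that $\theta'\colon\YY_0\to\ZZ_0$ has degree $|\widetilde{\HH}|$ and hence $\delta\le p^3/|\widetilde{\HH}|$. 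Since Assumption~\ref{aSimplest} is in force, $\XX_0'$ is still the irreducible non-elliptic curve of genus $5$ appearing in the proof of Theorem~\ref{tCoarseBounds}; consequently $\XX_0'\cap\YY_0'$ is finite, the computation $\XX_0'.\YY_0'=\delta\,(\XX_0.\XX_0)_{\sA_0}=8\delta$ is unchanged, and Proposition~\ref{pSimpleBounds} yields
\[
t(E_1,\pi_1,E_2,\pi_2,p')\ \le\ \tfrac{1}{4}\,(8\delta)+8\ =\ 2\delta+8\ \le\ \frac{2p^3}{|\widetilde{\HH}|}+8 .
\]

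It thus remains to exhibit, from the splitting hypotheses, an \'etale lift $\widetilde{\HH}$ of the largest possible order. Here I would use the product decomposition $\GG_p=(\EE_1\times_{\mathrm{Spec}\,R}\mathrm{Spec}\,R_1)_p\times(\EE_2\times_{\mathrm{Spec}\,R}\mathrm{Spec}\,R_1)_p$ together with the fact that over the (henselian local) base $\mathrm{Spec}\,R_1$ the connected-\'etale sequence of a product of finite flat group schemes is the direct sum of the connected-\'etale sequences of the factors; in particular $\GG_p^0=\bbmu_p\times\bbmu_p$ and $\GG_p^{et}=\Z/p\Z\times\Z/p\Z$ decompose according to the two elliptic factors. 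If the sequence splits for one of the curves, say $\EE_1$, via $\widetilde{\HH}_1\cong\Z/p\Z\subset(\EE_1\times_{\mathrm{Spec}\,R}\mathrm{Spec}\,R_1)_p$, then $\widetilde{\HH}:=\widetilde{\HH}_1\times\{0\}\subset\GG_p$ is \'etale of order $p$ and maps isomorphically onto $(\Z/p\Z)\times\{0\}\subset\GG_p^{et}$, and the sequence of $\GG_p$ splits over it. If it splits for both curves, via $\widetilde{\HH}_1$ and $\widetilde{\HH}_2$, then $\widetilde{\HH}:=\widetilde{\HH}_1\times\widetilde{\HH}_2\subset\GG_p$ is \'etale of order $p^2$ and maps isomorphically onto all of $\GG_p^{et}$.

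Substituting $|\widetilde{\HH}|=p$, respectively $|\widetilde{\HH}|=p^2$, into the displayed estimate then gives $t(E_1,\pi_1,E_2,\pi_2,p')\le 2p^2+8$, respectively $\le 2p+8$, as asserted. I do not anticipate a genuine obstacle: the substantive input — the affine-bundle and first-order-deformation formalism producing $\XX_0'$, the intersection identity $\XX_0'.\YY_0'=8\delta$, and the finiteness of $\XX_0'\cap\YY_0'$ — is already available from Theorem~\ref{tCoarseBounds} and the preceding discussion, and only the bookkeeping above remains. The two points one should verify with some care are the compatibility of the connected-\'etale sequence with the product decomposition $\sA=\EE_1\times_{\mathrm{Spec}\,R}\EE_2$ (and its base change $\sA_1$) used to assemble $\widetilde{\HH}$, and the assertion (recalled above) that $\deg\theta'=|\widetilde{\HH}|$, which is precisely what makes $\delta\le p^3/|\widetilde{\HH}|$ hold.
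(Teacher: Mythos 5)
Your proof is correct and follows essentially the same route as the paper: the paper's own proof is a one‑line ``immediate from the preceding reasoning and Theorem \ref{tCoarseBounds}'', and what you have written simply makes that preceding reasoning explicit (the factorisation through $\BB=\sA_1/\widetilde{\HH}$, the estimate $\delta\le p^3/|\widetilde{\HH}|$, and the assembly of $\widetilde{\HH}$ from the product decomposition of $\GG_p$ over $\sA_1=\EE_1\times_R\EE_2$ giving $|\widetilde{\HH}|=p$ or $p^2$). Your two ``points to verify with care'' are indeed the only genuine checkpoints, and both are handled as the paper intends.
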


\begin{proof}
This is immediate from the preceding reasoning and the proof of Theorem \ref{tCoarseBounds}. 
\end{proof}

Hence it becomes interesting to ascertain when, given elliptic curves $\EE_i\times_{\mathrm{Spec}\, R} \mathrm{Spec}\, R_1 \to \mathrm{Spec}\, R_1$ with ordinary reduction, the connected-\'{e}tale exact sequence for the finite flat group scheme of $p$-torsion points splits.

\medskip

Recall that $R_1=W_2(k)$, $W_2(k) = W(k)/p^2$, and that, as a \emph{set} $W_2(k) = k \times k$ with addition and multiplication defined explicitly by
\begin{align*}
(x_0,x_1)+ (y_0,y_1) &:= \bigl( x_0+y_0, x_1+y_1 - \sum_{i=1}^{p-1} \frac{(p-1)!}{i! (p-i)!} x_0^i y_0^{p-i} \bigr)\\
 & =\bigl( x_0+y_0, x_1+y_1 - \frac{(x_0+y_0)^p -x_0^p-y_0^p}{p} \bigr) \\
(x_0,x_1)\cdot (y_0,y_1) &:= \bigl( x_0y_0 , x_0^py_1 + y_0^px_1 + px_1y_1 \bigr) = \bigl( x_0y_0 , x_0^py_1 + y_0^px_1 \bigr).
\end{align*}
(The formulas defining addition and multiplication work more generally for any ring $A$ to give $W_2 (A)$). 

The Frobenius induces a homomorphism
\[
\mathrm{Fr}\colon W_2 (k) \to W_2 (k), \quad (x_0, x_1) \mapsto (x_0^p, x_1^p). 
\]

\begin{lemma}\label{lSplittingPTorsion}
Let $\EE_1 \to \mathrm{Spec}\, R_1$ be an elliptic curve with ordinary reduction $E_0/k$. The following are equivalent:
\begin{enumerate}
\item
the connected-\'{e}tale exact sequence
\[
\xymatrix{
0 \ar[r] & \bbmu_p \ar[r] & (\EE_1)_p \ar[r] & \Z/p\Z \ar[r] & 0
}
\]
for the finite flat group scheme of $p$-torsion points $(\EE_1)_p$ splits. 
\item
The (relative) Frobenius morphism $\mathrm{Fr}\colon E_0 \to E_0'$ lifts to a morphism
\[
\xymatrix{
F\colon \EE_1 \ar[rr]\ar[rd] & & \EE_1' \ar[ld]\\
 & S_1 & 
 }
\]
Here $\EE_1'$ is the pull-back of $\EE_1$ under the Witt vector Frobenius $\mathrm{Fr}\colon S_1 \to S_1$. 
\end{enumerate}
\end{lemma}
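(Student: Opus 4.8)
The plan is to establish the two implications separately. I will use freely the following facts about the strictly Henselian (indeed, $\ol{\F}_p$-residue) local base $S_1=\Spec W_2(k)$: every finite flat group scheme over $S_1$ has a functorial connected–étale sequence; a finite étale group scheme over $S_1$ is constant (its geometric points carry a trivial Galois action); a finite flat commutative group scheme of order $p$ is killed by $p$; a homomorphism of abelian $S_1$-schemes whose special fibre is an isogeny of degree $d$ is finite locally free of degree $d$; and the reduction map on $\Hom$-groups of abelian $S_1$-schemes is injective, so a homomorphism of the special fibres has at most one lift. The decisive point, special to $k=\ol{\F}_p$, is that the Witt-vector Frobenius $\mathrm{Fr}\colon W_2(k)\to W_2(k)$, $(x_0,x_1)\mapsto(x_0^p,x_1^p)$, is a ring \emph{automorphism} (its inverse is $(x_0,x_1)\mapsto(x_0^{1/p},x_1^{1/p})$ by the displayed formulas); hence $\EE_1'=\mathrm{Fr}^*\EE_1$ arises from $\EE_1$ by base change along an \emph{isomorphism} of $S_1$, so $(\EE_1')_p=\mathrm{Fr}^*((\EE_1)_p)$ together with its connected–étale sequence, and that sequence splits if and only if the one for $(\EE_1)_p$ does.

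\textbf{(b) $\Rightarrow$ (a).} Suppose $F\colon\EE_1\to\EE_1'$ lifts $\mathrm{Fr}\colon E_0\to E_0'$. Then $F$ is finite locally free of degree $p$, and $\ker F$ is finite flat of order $p$, hence killed by $p$, so $\ker F\subseteq(\EE_1)_p$; its special fibre $\ker\mathrm{Fr}\cong\bbmu_p$ is infinitesimal, so $\ker F$ is connected and therefore $\ker F=(\EE_1)_p^0\cong\bbmu_p$. The dual isogeny $F^\vee\colon\EE_1'\to\EE_1$ (elliptic curves being canonically principally polarised, hence self-dual) reduces to $\mathrm{Fr}^\vee$, which is the étale isogeny $g\colon E_0'\to E_0$ with $g\circ\mathrm{Fr}=[p]_{E_0}$ from the discussion preceding Proposition~\ref{pBothSupersingular}. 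Since $g$ is étale and the étale locus of $F^\vee$ is open and contains the unique point of $S_1$, $F^\vee$ is étale; thus $\ker F^\vee$ is finite étale of order $p$ over $W_2(\ol{\F}_p)$, so $\ker F^\vee\cong\Z/p\Z$, and $\ker F^\vee\subseteq(\EE_1')_p$ because $F\circ F^\vee=[p]_{\EE_1'}$. An étale subgroup scheme of order $p$ of $(\EE_1')_p$ meets the connected part trivially, hence maps isomorphically onto $(\EE_1')_p^{\mathrm{et}}\cong\Z/p\Z$ and splits the connected–étale sequence of $(\EE_1')_p$; transporting this splitting along $\mathrm{Fr}$ yields (a).

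\textbf{(a) $\Rightarrow$ (b).} Here I would first note that, with no hypothesis beyond ordinary reduction, the connected subgroup scheme $(\EE_1)_p^0\cong\bbmu_p$ gives a quotient isogeny $q\colon\EE_1\to\mathcal C:=\EE_1/(\EE_1)_p^0$ of degree $p$ whose special fibre is $\mathrm{Fr}\colon E_0\to E_0'$; so $\mathcal C$ is \emph{some} deformation of $E_0'$ over $S_1$ and $q$ lifts $\mathrm{Fr}$. Consequently (b) is equivalent to the statement that $\mathcal C\cong\EE_1'$ as deformations of $E_0'$ (necessarily via the unique isomorphism reducing to $\mathrm{id}_{E_0'}$), and this is the sole point at which the splitting hypothesis (a) is to be used. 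One clean way to organise the rest is to prove that (a) forces $\EE_1$ to be the reduction modulo $p^2$ of the canonical (Serre–Tate) lift of $E_0$: then both $\mathcal C=\EE_1/(\EE_1)_p^0$ and $\EE_1'=\mathrm{Fr}^*\EE_1$ are the reduction modulo $p^2$ of the canonical lift of $E_0'$, hence isomorphic. The converse inclusion is clear: the canonical lift modulo $p^2$ satisfies (a), as its $p$-torsion is $\bbmu_p\times\Z/p\Z$ already over $W(k)$, and (b), since Frobenius lifts on the canonical lift.

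\textbf{The main obstacle} is exactly this identification $\mathcal C\cong\EE_1'$, equivalently the implication ``(a) $\Rightarrow$ $\EE_1$ is canonical modulo $p^2$''. It amounts to computing how the Serre–Tate (equivalently, Kodaira–Spencer) parameter of a lift over $W_2(k)$ — which lies in the group $\widehat{E_0}(pW_2(k))\cong k$ — is affected by (i) passage to the quotient by the connected $p$-torsion and (ii) pullback along the Witt-vector Frobenius, and checking that these two transforms agree precisely when the extension $0\to\bbmu_p\to(\EE_1)_p\to\Z/p\Z\to 0$ splits. This is the level-$p^2$ shadow of Serre–Tate theory, and it is here that the explicit formulas for addition, multiplication and Frobenius on $W_2(k)$ recalled above are used essentially; everything else in the argument is formal.
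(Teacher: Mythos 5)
Your argument for (b) $\Rightarrow$ (a) is correct and genuinely different from the paper's. The paper invokes the Nori--Srinivas characterisation to conclude that (b) forces $\EE_1$ to be the Serre--Tate canonical lift modulo $p^2$, extends to the canonical lift over all of $W(k)$, and then reads off the splitting from the structure of its $p$-divisible group. You instead argue directly on the level of finite flat group schemes: $\ker F$ is connected of order $p$, hence equals $(\EE_1)_p^0\cong\bbmu_p$; the dual isogeny $F^\vee$ is \'{e}tale because its special fibre is and the base is a one-point scheme; $\ker F^\vee\cong\Z/p\Z$ then furnishes the splitting; and, since the Frobenius on $W_2(\overline{\F}_p)$ is an automorphism of the base, the splitting transports from $(\EE_1')_p$ to $(\EE_1)_p$. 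This is self-contained and avoids appealing to the canonical-lift machinery for this direction, which is a real simplification.

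However, the (a) $\Rightarrow$ (b) direction is not a proof. You correctly reduce the statement to ``(a) forces $\EE_1$ to be the reduction modulo $p^2$ of the canonical lift of $E_0$,'' and you correctly flag this as the crux, but you then only \emph{describe} the computation that would be needed (how the Serre--Tate parameter transforms under (i) quotient by $(\EE_1)_p^0$ and (ii) pullback along Frobenius) without carrying it out. That is exactly the nontrivial content of this implication, and it cannot be waved away as ``the level-$p^2$ shadow of Serre--Tate theory.'' The paper fills precisely this gap with a short but essential argument using the explicit construction of the Serre--Tate pairing $q\colon T_p(E_0)\times T_p(E_0)\to 1+\mathfrak{m}_{R_1}$: because $1+\mathfrak{m}_{R_1}=1+(p)$ is killed by $p$, the pairing factors through $E_0[p]\times E_0[p]$, and its construction sends $x\in E_0[p]$ to $p\tilde{x}$ for any lift $\tilde{x}\in\EE_1(R_1)$ of $x$; the splitting in (a) lets one choose $\tilde{x}$ of order $p$, forcing $p\tilde{x}=0$ and hence triviality of $q$, which is the defining property of the canonical lift. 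Without something of this kind, your implication (a) $\Rightarrow$ (b) remains open.
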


\begin{proof}
The properties in the statement are equivalent to $\EE_1 \to S_1$ being the canonical lift of $E_0/k$ in the sense of Serre-Tate, and the proof requires some background concerning Serre-Tate canonical lifts, compare \cite{Ka78}, \cite[Section 2.10]{Hi12}, Appendix by M.V. Nori and V. Srinivas to \cite{MS87}. 

\medskip

Suppose first that a) holds, so this exact sequence splits. Now b) is equivalent to $\EE_1 \to \mathrm{Spec}\, R_1$ being the canonical lift of $E_0/k$ in the sense of Serre-Tate (which is unique) by the Appendix by M.V. Nori and V. Srinivas to \cite{MS87}, Theorem 1) (and its proof, compare also Proposition 1 ibidem). So we have to prove that the splitting of the exact sequence tells us that $\EE_1 \to \mathrm{Spec}\, R_1$ is the canonical lift. Let
\[
T_p E_0 = \varprojlim_n E_0[p^n]
\]
be the Tate module of $E_0$. One knows that for a local artinian $k$-algebra $A$ there is an isomorphism, functorial in $A$ between infinitesimal deformations of $E_0$ over $A$ and $\Z_p$-bilinear maps
\[
q\colon T_p(E_0) \times T_p(E_0) \to 1+\mathfrak{m}_A
\]
where $\mathfrak{m}_A$ is the maximal ideal of $A$ (``Serre-Tate coordinates"), see \cite[Thm. 2.10.5]{Hi12} or \cite[Thm. 2.1]{Ka78}. (Actually it is neater to think of the second factor in the source of the pairing as $T_p(E_0^t)$, the Tate module of the dual abelian variety, which is again isomorphic to $E_0$ in our case, however).

So we need to check that under our hypothesis on the splitting of the sequence, the $q$-pairing is trivial (the canonical lift corresponds to the trivial pairing). In our case, the target $1+\mathfrak{m}_{R_1} = 1+ (p)$ is annihilated by $p$, so the pairing already factors over a pairing 
\[
T_p(E_0)/p \times T_p(E_0)/p \simeq E[p] \times E[p]  \to 1+\mathfrak{m}_{R_1}. 
\]
The construction of $q$ is described \cite[p. 151/152]{Ka78} or \cite[p. 218-221]{Hi12}: in our case, for the pairing to be trivial, we only need to check that the composite
\[
\xymatrix{
T_p(E_0) \ar@{->>}[r] & E_0[p] \ar[r]^{``p"\quad\quad\quad\quad\quad} & \mathrm{Hom}_{\Z_p} (T_p(E_0), 1+\mathfrak{m}_{R_1}) 
}
\]
is trivial, where the homomorphism $``p"$ is defined as follows: for $x\in E_0[p]$, pick a lift $\tilde{x} \in \EE_1(R_1)$ of $x$; then $p\tilde{x}$ does not depend on the chosen lift, and can be identified with an element in $\mathrm{Hom}_{\Z_p} (T_p(E_0), 1+\mathfrak{m}_{R_1})$; however, if the sequence in a) splits we can choose a lift in $\EE_1(R_1)$ of order $p$ whence $p\tilde{x}$ is trivial. 

\medskip

Now suppose that b) holds, the Frobenius lifts. Then again by the Appendix by M.V. Nori and V. Srinivas to \cite{MS87}, Theorem 1), $\EE_1 \to \mathrm{Spec}\, R_1$ is the canonical lift of $E_0/k$. We can extend it to the canonical lift $\EE \to \mathrm{Spec}\, R$ over the entire Witt vectors (not just the first order truncation). But by Serre-Tate theory, lifts of $E_0$ to $\mathrm{Spec}\, R$ correspond to lifts of the $p$-divisible group scheme of torsion points of order a power of $p$ on $E_0$ to $\mathrm{Spec}\, R$, and the Serre-Tate canonical lift is precisely characterised by the fact that that lift splits into the unique lift of the \'{e}tale rank $1$ group and the group of multiplicative type. Thus in particular, the exact sequence in a) splits. 
\end{proof}

It is interesting and necessary for applications to have a way to test when b) of Lemma \ref{lSplittingPTorsion} holds for a concretely given $\EE_1 \to \mathrm{Spec}\, R_1$. By Thm. 1, 3) of the Appendix to \cite{MS87}, if we let $L$ be a degree $1$ line bundle on $E_0$, 
associated to the given origin of $E_0$, it lifts uniquely to a line bundle $\LL$ on $\EE_1 \to \mathrm{Spec}\, R_1$ such that $F^* \LL' \simeq \LL^{\otimes p}$ (where $\LL'$ is the line bundle induced by pull-back by $\LL$ on the Frobenius twist $\EE_1'$). If we use $\LL^{\otimes 3}$ and $(\LL')^{\otimes 3}$ to embed $\EE_1 \to \mathrm{Spec}\, R_1$ and $\EE_1' \to \mathrm{Spec}\, R_1$ into $\P^2_R$ (with homogeneous coordinates $x,y,z$), the Frobenius lift $F$ is given by a triple of homogeneous polynomials of degree $p$ that reduce to $(x^p, y^p, z^p)$ on the central fibre. This gives a way to decide algorithmically if a given lift $\EE_1 \to \mathrm{Spec}\, R_1$ is the canonical lift or not. In fact, it is advantageous to work with all possible lifts at once. 

\medskip

We will illustrate the algorithm in a simple case. Suppose we are given a homogeneous degree $3$ polynomial $e \in \Z[x,y,z]$ such that its reduction $e_p  \in \F_p[x,y,z]$ is the equation of a smooth plane cubic. Also write $f = (x^p, y^p, z^p)$. Then
\[
e (f) - e^p \equiv 0 \, \mod \, p
\]
and thus $e (f) - e^p =  pd$ for some homogeneous polynomial $d$ of degree $3p$. Let $f+pf'$ for $f'$ another triple of homogeneous degree $p$ polynomials be a lift of the Frobenius modulo $p^2$. Taylor expansion gives
\[
e(f+pf') = e(f) + p\,  (\grad e)(f) f' + p^2 \cdot (\mathrm{remainder})
\]
and thus
\[
e(f+pf') - e^p \equiv p (d +  (\grad e)(f) f' ) \, \mod \, p^2 .
\]
Then $f +pf'$ is a lift of the Frobenius if and only if the last expression is a multiple of $e$ modulo $p^2$ meaning one can write it as $p\, c \cdot e$ for another unknown polynomial $c$. Thus we have to solve 
\[
p (d +  (\grad e)(f) f' - c e)\equiv 0 \, \mod \, p^2
\]
or 
\[
d +  (\grad e)(f) f' - c e\equiv 0 \, \mod \, p 
\]
for $f'$ and $c$. This can be done by a Gr\"obner basis calculation.

\section{Bounds for curves with good reduction at two given primes}\label{pBoundsTwoPrimes}

So far we have we have only discussed how to obtain a bound on the image of coprime-to-$p$ torsion 
\[
t (E_1, \pi_1, E_2, \pi_2, p'):= \left| \pi_1\left( E_1[\infty]^{(p')} \right) \cap \pi_2 \left( E_2[\infty]^{(p')} \right) \right|
\] 
Naively, one can take another prime $q\neq p$ satisfying \ref{aBasic} and obtain a bound on $p$-primary torsion
\[
t (E_1, \pi_1, E_2, \pi_2, p):= \left| \pi_1\left( E_1[\infty]^{(p)} \right) \cap \pi_2 \left( E_2[\infty]^{(p)} \right) \right|
\] 
by simply noting that $E_i[\infty]^{(p)}\subseteq E_i[\infty]^{(q')}$.

In \cite{Ray83-1}, Raynaud describes how to combine the two bounds into a total torsion bound. For this, it is equivalent and more convenient to work with the abelian scheme $\sA$ and the curve $\XX$ and instead discuss how to obtain bounds on $t_{E_{1}\times E_{2},X}$; the (geometric) torsion points of $A=E_1 \times E_2$ that lie on $X$. For this we denote by $t_{A,X,p'}$ and $t_{A,X,p}$ the coprime-to-$p$ and $p$-primary torsion lying on $X$ respectively. It will be necessary for us to make the following definition

\begin{definition}\label{dLargeGaloisActions}
    Let $G_{K}$ denote the absolute Galois group of $K$ and let $M$ be a $p$-divisible $G_{K}$ module. We say that the action of $G_{K}$ on $M$ is \textit{large} if for any element $x\in M$ of order at least $p^r$, the size of the orbit $G_{K}\cdot x$ tends to infinity as $r$ tends to infinity.

    Specifically, for any integer $N>0$ there exists an integer $r$ such that for any elements of order $>p^r$ we have that:
    \[|G_{K}\cdot x | > N\]
\end{definition}

Of course we are interested in when $M=A[\infty]^{(p)}$ is the $p$-primary torsion of $A$. It is immediate that the torsion of $A$ may be decomposed into $p$-primary and coprime-to-$p$ parts
\[A[\infty] = A[\infty]^{(p)}\oplus A[\infty]^{(p')}\]
We will combine the bounds on the two types of torsion to obtain a total bound 

\begin{proposition}\label{pTotalTorsionBounds}
    If the action of $G_{K}$ on $A[\infty]^{(p)}$ is large then there exists a constant $c$ such that
    \[t_{A,X} \leq c\]
\end{proposition}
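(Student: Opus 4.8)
The plan is to combine the coprime-to-$p$ bound $t_{A,X,p'}\le c_1$ coming from Theorem \ref{tCoarseBounds} (applied with any auxiliary prime $q\neq p$ satisfying Assumption \ref{aBasic}, using $A[\infty]^{(p)}\subseteq A[\infty]^{(q')}$, so that also $t_{A,X,p}\le c_1$) with the ``largeness'' hypothesis on the Galois action, following the argument of \cite{Ray83-1}. The key point is that a torsion point $x\in X(\bar K)$ of large $p$-power order cannot exist, because its whole Galois orbit $G_K\cdot x$ would be forced to lie on $X$ (as $X$ is defined over $K$, hence $G_K$-stable), and largeness would make that orbit bigger than the a priori bound on the $p$-primary torsion of $A$ lying on $X$. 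So first I would record that, since $X$ is defined over $K$, the set of torsion points of $A$ lying on $X$ is stable under $G_K$; in particular if $x$ is a $p$-primary torsion point on $X$ then every Galois conjugate of $x$ is again a $p$-primary torsion point on $X$, so $|G_K\cdot x|\le t_{A,X,p}\le c_1$.

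Next, apply Definition \ref{dLargeGaloisActions} with $N=c_1$: largeness of the $G_K$-action on $M=A[\infty]^{(p)}$ yields an integer $r_0$ such that every element of $M$ of order $>p^{r_0}$ has Galois orbit of size $>c_1$. Combining with the previous step, no $p$-primary torsion point on $X$ can have order $>p^{r_0}$; hence every $p$-primary torsion point of $A$ lying on $X$ is killed by $p^{r_0}$, i.e. lies in $A[p^{r_0}]$. Therefore $t_{A,X,p}\le \#\bigl(A[p^{r_0}]\bigr)=p^{4r_0}$ (for $A=E_1\times E_2$ a surface), which is a finite constant depending only on $r_0$, hence on the data.

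It remains to assemble a bound on $t_{A,X}$ from the bounds on $t_{A,X,p}$ and $t_{A,X,p'}$. Using the decomposition $A[\infty]=A[\infty]^{(p)}\oplus A[\infty]^{(p')}$, write a torsion point $x\in X$ as $x=x_p+x_{p'}$. This does not immediately bound $t_{A,X}$ by the product $t_{A,X,p}\cdot t_{A,X,p'}$, since $x_p$ and $x_{p'}$ need not individually lie on $X$; here I would follow Raynaud's device of considering, for fixed $x_{p'}$, the translated curve $X-x_{p'}$ and counting $p$-primary points on it — but one must be careful, as translation changes the curve. The cleaner route, and the one I would take, is: there are only finitely many translates of $X$ by coprime-to-$p$ torsion points that arise (indeed $x_{p'}$ ranges over a set injecting into $t_{A,X,p'}$-many possibilities once one fixes a model), so one gets $t_{A,X}\le t_{A,X,p'}\cdot \max_{x_{p'}}\#\{p\text{-primary torsion on }X-x_{p'}\}$; and each translate $X-x_{p'}$ is again a non-elliptic curve to which the same largeness/orbit argument applies (the Galois orbit of a high-order $p$-primary point on $X-x_{p'}$ has size $>N$ while only $t_{A,X-x_{p'},p}\le c_1$ such points exist on it), giving a uniform bound $p^{4r_0}$ as above. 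Setting $c=c_1\cdot p^{4r_0}$ finishes the proof.

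The main obstacle is precisely this last bookkeeping step — controlling $p$-primary torsion on the \emph{translates} $X-x_{p'}$ uniformly, and checking that the largeness hypothesis, stated for $A[\infty]^{(p)}$, still gives the needed orbit lower bound after translation (translation by a coprime-to-$p$ point permutes $p$-primary torsion, so the orbit sizes are unaffected, but one should verify that each translate is still defined over a finite extension of $K$ with the Galois action remaining large, which is automatic since $x_{p'}$ is torsion, hence defined over a finite extension, over which largeness persists). Everything else is a direct combination of Theorem \ref{tCoarseBounds}, Definition \ref{dLargeGaloisActions}, and the $G_K$-stability of $X$.
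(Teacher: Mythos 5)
There is a genuine gap in the bookkeeping at the end, and it is precisely the step you yourself flag as ``the main obstacle.'' You claim that $x_{p'}$ (the coprime-to-$p$ part of a torsion point $x \in X$) ranges over a set injecting into $t_{A,X,p'}$-many possibilities, and then you multiply. But $x_{p'}$ does not lie on $X$; it lies on the translate $X - x_p$, so the number of distinct $x_{p'}$'s that can arise is $\sum_{x_p}\,|(X-x_p)(\bar K)\cap A[\infty]^{(p')}|$, which is not a priori finite: you would first need to know the set of occurring $x_p$'s is finite. So the proposed inequality $t_{A,X}\le t_{A,X,p'}\cdot\max_{x_{p'}}\#\{p\text{-primary torsion on }X-x_{p'}\}$ is not justified, and the argument does not close.

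The paper's proof fixes this by running the two bounds in the \emph{opposite} order, which is where the largeness hypothesis actually does its work. Given $x=x'+x''$ with $x'\in A[\infty]^{(p')}$ and $x''\in A[\infty]^{(p)}$, one first observes that $x'\in A(K)$ (coprime-to-$p$ torsion is unramified, cf.\ Lemma \ref{lDefCoprimeTorsion}), hence $X-x'$ is a $K$-curve and $G_K\cdot x''\subseteq (X-x')(\bar K)\cap A[\infty]^{(p)}$. The translate bound (Lemma \ref{lpPrimaryTorsionBounds}) caps $|G_K\cdot x''|$ by $8q^3$, and largeness then caps $\mathrm{ord}(x'')\le p^r$, so there are at most $p^{4r}$ possible $p$-primary parts $x''$. \emph{Only now}, for each of these finitely many $x''$, one applies Lemma \ref{lCoprimeTorsionBoundsforTranslate} to $X-x''$ to bound the possible $x'$ by $8p^3$, obtaining $t_{A,X}\le 8p^{4r+3}$. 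Your steps 1--3 essentially recover the intermediate fact that $p$-primary torsion points on a $K$-translate of $X$ have bounded order, but you then try to enumerate the coprime-to-$p$ translates first, which cannot work without first bounding the $x_p$'s. If you simply swap the roles of $x_p$ and $x_{p'}$ in your final paragraph — bound the number of $x_p$'s via largeness and the translate bound on $p$-primary torsion, then bound $x_{p'}$ on each $X-x_p$ — you obtain the paper's proof.
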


To do this, we are required to strengthen Theorem \ref{tCoarseBounds} and show that the bounds obtained are invariant under translation (cf.\@ \cite[Theorem 4.4.1]{Ray83-1})

\begin{lemma}\label{lCoprimeTorsionBoundsforTranslate}
    Let $a\in A(\bar{K})$. Then if $(X+a) \hookrightarrow A$ denotes the translation of the curve $X$ by $a$ we have
    \[t_{A,X+a,p'} \leq 8p^3\]
\end{lemma}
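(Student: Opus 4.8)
The plan is to spread $a$ out over a discrete valuation ring and then rerun the argument of Theorem~\ref{tCoarseBounds} essentially verbatim with the \emph{translated} curve $\XX+a$ in place of $\XX$. Since a translation is an isomorphism, every geometric quantity used in that argument is translation-invariant, and the estimate one reads off is $t_{A,X+a,p'}\le 8p^{3}$; for a general translate one cannot afterwards go through $\pi_{1}\times\pi_{2}$ and the diagonal, which is why there is no factor $\tfrac14$ and no summand $8$ as there was in Proposition~\ref{pSimpleBounds}.

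First I would pass to a model over a DVR. If $X+a$ carries no torsion point of order prime to $p$ there is nothing to prove, so assume otherwise. As $a\in A(\bar K)$ is algebraic over $K$, after enlarging $v$ it is defined over a finite extension $F'$ of $F=\widehat{K^{\mathrm{ur}}_{v}}$, necessarily totally ramified because the residue field $k=\overline{\mathbb F}_{p}$ of $F$ is algebraically closed; let $R'$ be its valuation ring, a complete discrete valuation ring of mixed characteristic $(0,p)$ with residue field $k$ and maximal ideal $\mathfrak m_{R'}$, ramified over $\mathbb Z_{p}$. After base change, $\sA'=\sA\times_{R}R'$ is an abelian scheme and $\XX'=\XX\times_{R}R'$ a smooth proper $R'$-curve with special fibre $\XX_{0}$; by properness $a$ extends to a section $\Spec R'\to\sA'$, and its translate $\XX'+a\subset\sA'$ is again a smooth proper $R'$-curve, with generic fibre $X+a$ and special fibre $\XX_{0}+a_{0}$, where $a_{0}\in\sA_{0}(k)$ is the reduction of $a$. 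Since all prime-to-$p$ torsion of $A$ is rational over $K^{\mathrm{ur}}_{v}\subset F'$ and the $n$-torsion subscheme of $\sA'$ is finite \'{e}tale over $R'$ for $p\nmid n$ (Lemma~\ref{lDefCoprimeTorsion}, whose proof applies verbatim over $R'$), the prime-to-$p$ torsion points of $A$ lying on $X+a$ are $R'$-points of $\XX'+a$ which reduce injectively into $(\XX_{0}+a_{0})(k)$; moreover each is $p$ times another $R'$-point of $\sA'$ by Lemma~\ref{lInImageOfMultByP}, hence lies in $p\,\sA'(R')$. Writing $R'_{1}=R'/\mathfrak m_{R'}^{2}$ (the correct first-order thickening here, in place of $R/p^{2}$ in the unramified case) and $\sA'_{1},(\XX'+a)_{1}$ for the reductions, this yields
\[
t_{A,X+a,p'}\ \le\ \bigl|\,\mathrm{im}\bigl(p\,\sA'_{1}(R'_{1})\cap(\XX'+a)_{1}(R'_{1})\to(\XX_{0}+a_{0})(k)\bigr)\bigr|.
\]

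Then I would run the proof of Theorem~\ref{tCoarseBounds} over $R'_{1}$ with $\XX_{0}+a_{0}$ replacing $\XX_{0}$. One forms the affine bundle $V_{0}=\P\bigl(\NN_{(\XX_{0}+a_{0})/\sA'}\bigr)\setminus\P\bigl(\NN_{(\XX_{0}+a_{0})/\sA_{0}}\bigr)$ over $\XX_{0}+a_{0}$, with compactification $\P\bigl(\NN_{(\XX_{0}+a_{0})/\sA_{0}}\oplus\OO_{\XX_{0}+a_{0}}\bigr)$; the image of $(\XX'+a)_{1}(R'_{1})$ under $\tau$ is the set of $k$-points of a curve $\XX_{0}'\subset V_{0}$ mapping isomorphically to $\XX_{0}+a_{0}$, and the reduced preimage $\YY_{0}\subset\sA_{0}$ of $\XX_{0}+a_{0}$ under the multiplication-by-$p$ map on $\sA_{0}$ gives, by \cite[Prop.~3.3.1]{Ray83-1}, a curve $\YY_{0}'\subset V_{0}$ with $\YY_{0}'\equiv\delta\,\XX_{0}'$, where $\delta=\deg(\YY_{0}'\to\XX_{0}+a_{0})$. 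The set in the last display receives a surjection from $(\XX_{0}'\cap\YY_{0}')(k)$, which is finite because $\XX_{0}'\cong\XX_{0}+a_{0}\cong\XX_{0}$ has genus $5$ and hence no elliptic component, so that \cite[proof of Thm.~4.4.1]{Ray83-1} (and \cite{Ray83-2} in this case) applies; it therefore has at most $\XX_{0}'\cdot\YY_{0}'$ elements. As the self-intersection of $\XX_{0}+a_{0}$ in $\sA_{0}$ equals that of $\XX_{0}$, namely $8$, one gets $\XX_{0}'\cdot\YY_{0}'=8\delta$, and $\delta\le p^{3}$ exactly as in Theorem~\ref{tCoarseBounds}: multiplication by $p$ on the abelian surface $\sA_{0}$ has degree $p^{4}$ and factors through the relative Frobenius, so its reduced restriction to $\YY_{0}$ has degree $\le p^{3}$ (indeed $\YY_{0}$ is itself a translate of the curve denoted $\YY_{0}$ there). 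Hence $t_{A,X+a,p'}\le 8\delta\le 8p^{3}$.

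The only step that is not a routine transcription of the proof of Theorem~\ref{tCoarseBounds}, and so the point I expect to be the main obstacle, is that $R'$ is now ramified over $\mathbb Z_{p}$, so Raynaud's first-order deformation machinery of \cite[\S3]{Ray83-1} --- in particular \cite[Prop.~3.3.1]{Ray83-1} --- must be used over $R'$ with the thickening $R'_{1}=R'/\mathfrak m_{R'}^{2}$. This presents no real difficulty: the only property of the multiplication-by-$p$ map entering that machinery is that it reduces on the special fibre to an endomorphism with vanishing differential, which holds because the differential in question is multiplication by $p$, vanishing modulo $\mathfrak m_{R'}$; this is also why one must thicken by $\mathfrak m_{R'}^{2}$ rather than by $p^{2}$, as $p$ need not generate $\mathfrak m_{R'}$. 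Since this ramified setting is exactly the one in which Raynaud proves his Theorem~4.4.1, no genuinely new input is needed beyond the bookkeeping above.
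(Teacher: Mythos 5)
Your approach differs substantially from the paper's. You base change to a totally ramified extension $R'$ of $R$ over which $a$ is defined and rerun Theorem~\ref{tCoarseBounds} verbatim over $R'_1 = R'/\mathfrak{m}_{R'}^2$. The paper instead splits into two cases. For $a \in A(K)$ it lifts $a$ to $\sA(R)$, writes $a_1 = pb_1 + c_1$ with $c_1$ in the kernel of reduction, and checks translation-invariance of the bound for each piece: translating by $pb_1$ merely shifts the whole picture by $pb_0$, while translating by $c_1$ leaves $V_0$ and $\YY_0'$ untouched and only replaces $\XX_0'$ by another lift of $\XX_0$ with the same normal bundle. For $a \in A(\bar K)\setminus A(K)$ it uses a Galois argument instead: since $K$ is the completion of a maximal unramified extension, $G_K$ acts trivially on $A[\infty]^{(p')}$, so any coprime-to-$p$ torsion point on $X+a$ also lies on $X+a^\sigma$; choosing $\sigma$ with $X+a^\sigma \neq X+a$ (possible because the translation stabilizer of $\XX$ is an unramified finite group scheme, hence has $\bar K$-points in $A(K)$), one bounds by the intersection number $X\cdot X = 8$, far below $8p^3$.

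Your unified route is plausible, but the justification for its key step has a real gap. You claim that passing to $R'_1$ is harmless because "this ramified setting is exactly the one in which Raynaud proves his Theorem 4.4.1." This is not so: Raynaud's §3 machinery and Proposition~3.3.1 are set up over $W(k)$ with $R_1=W_2(k)$, and his Theorem~4.4.1 handles translates precisely by the same Galois/unramifiedness device the paper uses, not by moving to a ramified DVR. Your argument may well carry over to $R'_1$ (the facts you would need — that $[p]$ has vanishing differential on the special fibre, that $\NN_{\XX_0/\sA'}\cong\NN_{\XX_0/\sA_0}\oplus\OO_{\XX_0}$, and that Raynaud's construction only uses a square-zero thickening $k\hookrightarrow R'_1$ — are plausibly available), but this must be checked rather than cited, since the reference you give is to a proof that deliberately avoids the ramified situation. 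The paper's two-case split sidesteps all of this, and in the $a\notin A(K)$ case produces a far sharper bound.
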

\begin{proof}
    Of course, what we have shown already is the case $a=0$.
    First consider the case that $a\in A(K)$. Then we may repeat the methods of Theorem \ref{tCoarseBounds}. Namely, we can take $a\in \sA(R)\cong A(K)$ and prove that 
    \[
     \left| \mathrm{im}   \left( p \sA_1 (R_1) \cap (\XX_1+a_1) (R_1) \to \XX_0(k) \right) \right| \le 8p^3.
     \]
     where $a_1 \in \sA_1(R_1)$ is the reduction mod $p^2$ of $a$.  We write 
     \[\Lambda(a_1)=\mathrm{im}   \left( p \sA_1 (R_1) \cap (\XX_1+a_1) (R_1) \to \XX_0(k) \right)\] 
     
     As $k$ is algebraically closed, there exists an element $b_{0}\in \sA(k)$ such that $pb_0=a_0$. Picking any lifting $b_1$ of $b_0$ we have that $a_1=pb_1+c_1$ where $c_1$ is in the kernel of reduction.
     
     \begin{enumerate}[label=(\roman*)]
         \item First we assume that $a_1=pb_1$. Then we see that $\Lambda(pb_1)=\Lambda(0)+pb_0$ and so these sets are of the same cardinality.
         \item Now assume that $a_1=c_1$ is in the kernel of reduction. Then, as in Theorem \ref{tCoarseBounds}, the bundle $V_0$ and curve $\YY_0'$ remain unchanged (as they depend only on the special fibre of the translation) whereas the zero section $\XX_0'$ of $V_0$ changes, corresponding to $\XX_1+c_1$ being a different choice of lifting for $\XX_0 $. However, this new lifting still has normal bundle $\NN_{\XX_0/\sA_0}$ in $\P \bigl( \NN_{\XX_0/\sA} \bigr)$ and so we still have the same bound of $8\delta$ for $\Lambda(c_1)$ using the notation of the Theorem.
     \end{enumerate}

     Now assume that $a\in A(\bar{K}) \setminus A(K)$. Consider the $R$-group scheme $\GG$ of automorphisms of $\XX$ that come from translations by elements of $\sA$. As $\XX$ is smooth and irreducible with fibres of genus $5$, this is a finite group scheme. Moreover, $\XX$ has no infinitesimal automorphisms so $\GG$ is unramified. Thus, $X\neq X+a$ and there exists $\sigma \in G_{K}$ such that
     \[X+a^\sigma \neq X+a\]
     On the other hand, $A[\infty]^{(p')}$ is unramified so that
     \[(X+a)(\bar{K})\cap A[\infty]^{(p')} \subseteq (X+a^\sigma)(\bar{K}) \cap (X+a)(\bar{K})\]
     As this is the intersection of two irreducible curves which are not equal, it can be bounded by $(X+a)\cdot (X+a) = X\cdot X = 8$.
\end{proof}

Similarly, for any $a\in A(\bar{K})$, we repeat the above argument for a second prime $q\neq p$ satisfying Assumption \ref{aBasic} and obtain the following
\begin{lemma}\label{lpPrimaryTorsionBounds}
    For any $a\in A(\bar{K})$, we have that
    \[t_{A,X+a,p} \leq 8q^3\]
\end{lemma}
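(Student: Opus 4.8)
The plan is to deduce the bound directly from Lemma~\ref{lCoprimeTorsionBoundsforTranslate} by rerunning that argument verbatim with the prime $q$ in place of $p$, and then invoking the trivial inclusion of torsion subgroups $A[\infty]^{(p)} \subseteq A[\infty]^{(q')}$. First I would fix the second prime $q \ne p$; besides Assumption~\ref{aBasic} at $q$, I need the analogue of Assumption~\ref{aSimplest} at $q$ (the branch loci of the two standard projections remain disjoint on the special fibre over $q$), so that the spread $\XX'$ of $X$ over $R' = W(\overline{\mathbb F}_q)$ is a smooth irreducible $R'$-curve whose fibres have genus $5$. Such $q$ exist because the branch loci are disjoint on the generic fibre and hence on all but finitely many special fibres.

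With this setup, Proposition~\ref{pSimpleBounds}, Theorem~\ref{tCoarseBounds}, and the full two-case translation analysis in the proof of Lemma~\ref{lCoprimeTorsionBoundsforTranslate} carry over with $p$ replaced by $q$: the case $a \in A(K)$ (reduced to $a = pb$ and to $a$ in the kernel of reduction, where one uses that the new lift of $\XX_0$ still has normal bundle $\NN_{\XX_0/\sA_0}$) and the case $a \in A(\bar K) \setminus A(K)$ (where one passes to a Galois conjugate with $X + a^\sigma \ne X + a$ and bounds the intersection of two distinct irreducible curves by $X \cdot X = 8$) use nothing about $p$ other than Assumption~\ref{aBasic} and the smoothness of the spread of $X$. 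This yields $t_{A, X+a, q'} \le 8q^3$ for every $a \in A(\bar K)$.

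The remaining step is the observation that a $p$-primary torsion point has order a power of $p$, hence of order prime to $q$; therefore $A[\infty]^{(p)} \subseteq A[\infty]^{(q')}$, and every $p$-primary torsion point of $A$ lying on $X + a$ is in particular a coprime-to-$q$ torsion point of $A$ lying on $X + a$. Hence $t_{A, X+a, p} \le t_{A, X+a, q'} \le 8q^3$, which is the assertion.

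I do not anticipate a genuine obstacle: the mathematical content sits entirely in Lemma~\ref{lCoprimeTorsionBoundsforTranslate}, and here it is only a matter of relabelling together with a one-line containment. The single point worth verifying is that the ingredients of the translation argument are insensitive to which prime is used — the translation-automorphism group scheme of $\XX'$ is finite and unramified (again because $\XX'$ is smooth and irreducible of genus $5$ with no infinitesimal automorphisms), and $A[\infty]^{(q')}$ is unramified at the chosen place over $q$ (Lemma~\ref{lDefCoprimeTorsion} for $q$). Both hold automatically, so the ``hard part'' is essentially vacuous.
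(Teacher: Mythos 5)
Your proof is correct and takes essentially the same route as the paper, which simply states that one repeats the argument of Lemma~\ref{lCoprimeTorsionBoundsforTranslate} at a second prime $q$ satisfying Assumption~\ref{aBasic} and then uses the containment $A[\infty]^{(p)} \subseteq A[\infty]^{(q')}$; you have merely spelled out the relabelling and the containment step, and you are right to note explicitly that the analogue of Assumption~\ref{aSimplest} at $q$ is also needed (the paper leaves this implicit).
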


We can now prove Proposition \ref{pTotalTorsionBounds}

\begin{proof}
    Suppose that $x \in X(\bar{K}) \cap A[\infty]$. Then there exists a unique decomposition $x=x'+x''$ where $x'\in (X-x'')(\bar{K})\cap A[\infty]^{(p')}$ and $x''\in (X-x')(\bar{K})\cap A[\infty]^{(p)}$. As $x'\in A(K)$ we have that $X-x'$ is a $K$-curve and thus
    \[G_{K}\cdot x'' \subseteq (X-x')(\bar{K}) \cap A[\infty]^{(p)}\]
    Thus, using Lemma \ref{lCoprimeTorsionBoundsforTranslate} we have that $|G_{K}\cdot x''| \leq 8q^3$. As the Galois action is large, there exists some $r>0$ such that the order of $x''$ is at most $p^r$ (depending on $q$). As $\textnormal{dim}(A)=2$, there are at most $|A[p^r]|=p^{4r}$ possibilities for $x''$. For each such possibility, the number of possibilities for $x' $ is bounded by
    \[|(X-x'')(\bar{K})\cap A[\infty]^{(p')}| \leq 8p^3\]
    and so in total 
    \[t_{A,X} = |X(\bar{K}) \cap A[\infty]| \leq c:=8p^{4r+3}\]
\end{proof}

From the proposition it is clear that one has to understand when the Galois action on the $p$-primary torsion of an elliptic curve is large. Moreover, to obtain tractable bounds for the whole torsion, one needs to understand how the size of Galois orbits $G_{K}\cdot x$ grow with the order of $x$.

\begin{remark}\label{rGaloisOrbitsandFieldExtensions}
    As $K$ is maximally unramified, $G_{K}$ coincides with the inertia group $I_{K}$. It is immediate that for a $p$-primary torsion point $x\in E(\bar{K})$ we have the following relationship
    \[[K(x):K]=|I_{K}\cdot x|\]
    where $K(x)$ is the smallest extension of $K$ over which the point $x$ is defined.
\end{remark}

The answers to both questions raised are given by the following

\begin{proposition}\label{pGaloisActionsGoodReduction}
    Let $E/K$ be an elliptic curve with good reduction where $K$ is a maximally unramified extension of $\mathbb{Q}_{p}$ with valuation $v$ (normalised so that $v(p)=1$) and valuation ring $R$.
    \begin{enumerate}[label=(\roman*)]
        \item If $E$ has ordinary reduction with Serre-Tate coordinate $\lambda \in 1+pR$ then the action on the $p$-primary torsion of $E$ is large if and only if $\lambda \neq 1$. When $\lambda=1$, i.e.\@ $E$ is the canonical lifting of its reduction in the sense of Serre-Tate, there is a splitting
        \[E[\infty]^{(p)}\cong \mu_{p^\infty} \oplus \mathbb{Q}_p/\mathbb{Z}_p\]
        as $I_K$-modules, where $\mu_{p^\infty}$ is the $p$-divisible group formed from the $p^n$-th roots of unity.

        One may repeat the methods of Theorem \ref{tCoarseBounds} to bound common torsion points of the form 
        \[E[\infty]^{(p')}\oplus \mathbb{Q}_p/\mathbb{Z}_p\]
        as they are unramified and $p$-divisible. Then, using the new decomposition:
        \[E[\infty] \cong \left(E[\infty]^{(p')}\oplus \mathbb{Q}_p/\mathbb{Z}_p\right) \oplus \mu_{p^\infty}\]
        one may now apply Proposition \ref{pTotalTorsionBounds} noting that the action on $\mu_{p^\infty}$ is large.

        \item If $E$ has ordinary reduction with Serre-Tate coordinate $\lambda \neq 1$ then the representation:
        \[I_{K} \rightarrow \textnormal{Aut}(E[p^r])\]
        is given by the short exact sequence:
        \[0 \rightarrow \mu_{p^r} \hookrightarrow E[p^r] \twoheadrightarrow \mathbb{Z}/p^r \mathbb{Z} \rightarrow 0\]
        via the matrix
        \[\begin{bmatrix}
        \chi_{p^r} & \theta_{p^r}\\
        0 & 1
        \end{bmatrix}\]
        where for $\sigma \in I_K$
        \[
            \sigma(\lambda^{\frac{1}{p^r}})=\zeta_{p^r}^{\theta_{p^r}(\sigma)} \cdot \lambda^{\frac{1}{p^r}}
        \]
        where we fix a $p^r$-th root of $\lambda$, $\zeta_{p^r}$ is a primitive $p^r$-th root of unity and $\chi_{p^r}$ is the cyclotomic character.

        Thus, if $x\in E[p^r]$ is of exact order $p^r$ then either:

        \begin{enumerate}
            \item $\lambda^{\frac{1}{p^r}} \in K$, the exact sequence above is split and
            \[
            |I_K \cdot x|= 
            \begin{cases}
                1& \text{if } x\in \mathbb{Z}/p^r \mathbb{Z}\\
                p^r-p^{r-1}              & \text{otherwise}
            \end{cases}
            \]

            \item $\lambda^{\frac{1}{p^r}} \notin K$ and
            \[|I_K \cdot x|= p^{r}-p^{r-1}\]
            
        \end{enumerate}

        \item If $E$ has good supersingular reduction, then the Galois orbit on $p$-primary torsion is large. Moreover, if $x$ is of exact order $p^r$ then
        \[[K(x):K]=p^{2r}-p^{2r-2}\]
    \end{enumerate}
\end{proposition}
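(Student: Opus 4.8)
The plan is to translate each case into an explicit description of the $I_K$-action on $E[p^\infty]$ and then read off the sizes of Galois orbits. Two features of $K=\widehat{K^{\mathrm{ur}}_v}$ will be used throughout: its residue field is algebraically closed, so $G_K=I_K$ and every finite extension $L/K$ is totally ramified with $[L:K]=e(L/K)$; consequently, for a torsion point $x$ lying in the formal group of $E$, $[K(x):K]$ is at least the denominator, in lowest terms, of the valuation $v(x)\in\mathbb{Q}$. Thus in each case the task is to compute these valuations, or equivalently the degrees of the relevant Kummer extensions, as a function of the order of $x$.

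\textbf{Ordinary reduction, parts (i) and (ii).} By Serre--Tate theory $E[p^\infty]$ is, as an $I_K$-module, an extension
\[
0\longrightarrow \mu_{p^\infty}\longrightarrow E[p^\infty]\longrightarrow \mathbb{Q}_p/\mathbb{Z}_p\longrightarrow 0
\]
whose class is the Serre--Tate coordinate $\lambda\in 1+pR$, split precisely for the canonical lift $\lambda=1$ (compare Lemma \ref{lSplittingPTorsion}). I would observe that the étale quotient extends to a finite étale group scheme over $R$, hence is unramified, so $I_K$ acts trivially on it; the sub $\mu_{p^\infty}$ carries the cyclotomic action, and since $\mathbb{Q}_p(\zeta_{p^r})/\mathbb{Q}_p$ is totally ramified while $K/\mathbb{Q}_p$ is unramified, $\chi_{p^r}\colon I_K\to(\mathbb{Z}/p^r\mathbb{Z})^\times$ is surjective. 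Choosing compatible bases of $\mu_{p^r}$ and $\mathbb{Z}/p^r\mathbb{Z}$ produces the upper-triangular matrix of (ii), in which the off-diagonal cocycle $\theta_{p^r}$ is the Kummer cocycle of $\lambda$; its image is controlled by $[K(\zeta_{p^r},\lambda^{1/p^r}):K(\zeta_{p^r})]$, i.e.\ the order of $\lambda$ in $(1+\mathfrak m_{K(\zeta_{p^r})})/(1+\mathfrak m_{K(\zeta_{p^r})})^{p^r}$, which a computation with the $p$-adic logarithm evaluates in terms of $v(\lambda-1)$. With the matrix in hand the orbit of a point of exact order $p^r$ is obtained by elementary bookkeeping, since the $\mathbb{Z}/p^r\mathbb{Z}$-component is fixed and only the first coordinate moves. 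When $\lambda=1$ the points in the étale part form singleton orbits of arbitrarily large order, so the action is not large; in this case one records the $I_K$-equivariant splitting $E[\infty]^{(p)}\cong\mu_{p^\infty}\oplus\mathbb{Q}_p/\mathbb{Z}_p$, reruns the argument of Theorem \ref{tCoarseBounds} on the unramified $p$-divisible summand $E[\infty]^{(p')}\oplus\mathbb{Q}_p/\mathbb{Z}_p$, and applies Proposition \ref{pTotalTorsionBounds}, using that $I_K$ acts largely on $\mu_{p^\infty}$ (orbits of primitive $p^r$-th roots of unity have size $p^r-p^{r-1}$). When $\lambda\neq 1$, the same bookkeeping shows that for $r$ large every point of exact order $p^r$ has orbit dominating either $[K(\zeta_{p^r}):K]=p^r-p^{r-1}$ or a Kummer degree $[K(\lambda^{1/p^r}):K]$ that grows with $r$, so the action is large.

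\textbf{Supersingular reduction, part (iii).} Here $E[p^\infty]=\widehat E[p^\infty]$ is connected, hence a one-dimensional formal group of height $2$ over $R$; writing $T$ for its parameter, $[p^r](T)\equiv p^r T\pmod{T^2}$, while the coefficient of $T^i$ in $[p^r](T)$ is divisible by $p$ for $1\le i<p^{2r}$ and the coefficient of $T^{p^{2r}}$ is a unit, because multiplication by $p$ on $E_0/k$ is purely inseparable of degree $p^2$ and factors through $\mathrm{Fr}^2$. An induction on $s$ using the Newton polygon of $[p^r](T)$ over the discrete valuation ring $R$ then shows that the points of exact order $p^s$, for $1\le s\le r$, all have valuation $1/(p^{2s}-p^{2s-2})$: the case $s=1$ reads off $v=1/(p^2-1)$ from the two extreme vertices, and the inductive step uses that $[p]$ multiplies the valuation of such a point by $p^2$. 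Since $K(x)/K$ is totally ramified this gives $[K(x):K]\ge p^{2r}-p^{2r-2}$ for $x$ of exact order $p^r$; but $I_K$ preserves the order, so the orbit of $x$ lies inside the set of points of exact order $p^r$, which has cardinality exactly $p^{2r}-p^{2r-2}$. Hence $[K(x):K]=p^{2r}-p^{2r-2}$, the action of $I_K$ on points of exact order $p^r$ is transitive, and it is large since $p^{2r}-p^{2r-2}\to\infty$. Equivalently, one checks directly that the monic polynomial whose roots are the points of exact order $p^r$ is a factor of $[p^r](T)/T$ that is Eisenstein of degree $p^{2r}-p^{2r-2}$, hence irreducible over $K$.

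\textbf{Main obstacle.} The supersingular case is essentially formal once the shape of $[p^r](T)$ is in hand, so the real work will be in the ordinary case: correctly identifying $\theta_{p^r}$ with the Kummer cocycle of the Serre--Tate parameter — which rests on the precise dictionary between the Serre--Tate coordinate and the extension class of the $p$-divisible group, as in the appendix to \cite{MS87} and the proof of Lemma \ref{lSplittingPTorsion} — and then carrying out the arithmetic of $1+pR$, and more delicately of $1+\mathfrak m_{K(\zeta_{p^r})}$, modulo $p^r$-th powers via the $p$-adic logarithm, with the ramification of the cyclotomic tower and the usual exceptional behaviour at $p=2$ treated by hand. (One should also expect the orbit-size formulas in (ii) to require the proviso that $x$ lie in $\mu_{p^r}$, respectively project to a generator of the étale quotient, with the general case only yielding a divisor of these quantities; this does not affect parts (i) and (iii) or the application via Proposition \ref{pTotalTorsionBounds}.)
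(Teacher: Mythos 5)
The paper's own proof of this proposition is essentially a set of citations: part (i) is a discussion pointing to \cite[5.3]{Ray83-1}, part (ii) simply reads ``See \cite[Appendix]{Kr97}'', and part (iii) cites \cite[1.9 Prop.~9]{Se72} for $r=1$ and \cite[Cor.~5.2]{Sm23} for general $r$. Your proposal instead reconstructs the arguments from scratch, and it does so correctly. For (iii) the Newton-polygon induction on $[p^r](T)$ over the connected height-$2$ formal group, followed by the observation that the orbit of a point of exact order $p^r$ sits inside the set of all $p^{2r}-p^{2r-2}$ such points, is a clean standalone derivation of what the paper outsources to Serre and Smith; the Eisenstein-factor remark gives the same conclusion. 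For (i) and (ii) your identification of the off-diagonal entry with the Kummer cocycle of the Serre--Tate parameter, of its image with $[K(\zeta_{p^r},\lambda^{1/p^r}):K(\zeta_{p^r})]$, and of the splitting case with the canonical lift matches the structure behind Kraus's appendix and Lemma~\ref{lSplittingPTorsion}. Two small remarks. First, your caveat at the end is well taken: as written, the orbit formula in case (a) of (ii) is $p^s-p^{s-1}$ where $p^s$ is the exact order of the $\mu_{p^r}$-component of $x$, not automatically $p^r-p^{r-1}$; this is a mild imprecision in the statement rather than a gap in your argument, but it is worth recording explicitly when you specialise the matrix. Second, in part (iii) it would be cleaner to justify the Newton-polygon input (all intermediate coefficients of $[p](T)$ divisible by $p$, top coefficient a unit) by invoking that $[p]$ reduces to $\mathrm{Fr}^2$ up to unit on $E_0$, exactly as you indicate; with that made explicit the induction is airtight. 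What your route buys is a self-contained, citation-free treatment of all three parts, at the cost of carrying out the $1+\mathfrak m$ arithmetic in (ii) that the paper delegates to Kraus.
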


\begin{proof}
    \begin{enumerate}[label=(\roman*)]
        \item As discussed in \cite[5.3]{Ray83-1}, to bound the torsion of $A=E_1\times E_2$ lying on $X$ we are required to define a splitting $A[\infty]=T'\oplus T''$ where $T'$ has trivial $G_K$ action and is $p$-divisible and $T''$ has large Galois action. Clearly, it is sufficient to find such a decomposition for each curve $E_i$. In general, this will mean splitting into coprime-to-$p$ and $p$-primary torsion respectively. However, if $E_i$ is a canonical lifting of an ordinary elliptic curve, then there is a height 1, $p$-divisible subgroup $H \subseteq E_i[\infty]^{(p)}$ which is $G_K$-trivial. Writing $T''_i$ for the supplement of $H$ in $E_i[\infty]^{(p)}$ and setting $T'_i=H \oplus E_i[\infty]^{(p')}$, we obtain the desired decomposition for $E_i$.
        
        \item See \cite[Appendix]{Kr97}.
        
        \item The case $r=1$ is \cite[1.9 Prop.\@ 9]{Se72} and this result is extended for all $p^r$-torsion in \cite[Cor.\@ 5.2]{Sm23}.
    \end{enumerate}
\end{proof}

\begin{remark}\label{rValuationofSerreTateCoordinate}
    To obtain bounds on torsion in the ordinary reduction case, it is important to determine the minimal $r$ such that $\lambda^{\frac{1}{p^r}} \in K$ but $\lambda^{\frac{1}{p^{r+1}}} \notin K$. i.e.\@ 
    \[\lambda \in (1+pR)^r=1+p^r R\] 
    and so $v(\lambda-1)=r$.
    
    In \cite{Kr97}[Appendix, Prop.\@ 3], Kraus determines formulae for this value. If $E_0$ is the reduction of $E$, then for the special values $j(E_0)=0$ or $1728$, the valuation is determined by a minimal Weierstrass equation for $E$. Otherwise, we have the equality
    \[v(\lambda-1)=v(j(E)-j_{can}(E_0))\]
    where $j_{can}(E_0)$ is the $j$-invariant of the canonical lifting of $E_0$. The question of determining $j_{can}(E_0))$ as an equation of $j(E_0))$ has been explored in \cite{Fin10} and one may compute an approximation of this $j$-invariant for small primes.
\end{remark}

Let $E/K$ be an elliptic curve with good ordinary reduction. Let $E[p]$ be the $p$-torsion subgroup (viewed as a Galois module), $\EE$ the N\'eron model of $E$ and $\GG$ the $p$-torsion group scheme of its N\'eron model. In the view of the above remarks, it is useful to crystallise the relationship between the finite flat group scheme $\GG$ and the Galois module $\GG_K = E[p]$

\begin{lemma}\label{lFiniteFlatGroupSchemesandGKModules}
    Let $\GG$ and $\HH$ be finite flat group schemes and view $\GG_K$ and $\HH_K$ as $G_K=\mathrm{Gal}(\bar{K}/K)$-modules. Then the natural map is an isomorphism
    \[\mathrm{Hom}_R(\GG,\HH)\longrightarrow \mathrm{Hom}_{G_K}(\GG_K,\HH_K)\]
    i.e.\@ the functor between the category of finite flat group schemes over $R$ to $G_K$-modules is fully faithful.
\end{lemma}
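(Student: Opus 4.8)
The statement is the full faithfulness of the generic-fibre functor on finite flat group schemes over $R$, and I would prove it by treating \emph{faithfulness} and \emph{fullness} separately. First, since $K$ has characteristic zero, $\GG\tensor_R K$ and $\HH\tensor_R K$ are étale finite $K$-group schemes, so by Galois descent $\Hom_{G_K}(\GG_K,\HH_K)=\Hom_K(\GG\tensor_R K,\HH\tensor_R K)$; thus it suffices to prove that $\GG\mapsto \GG\tensor_R K$ is fully faithful from finite flat $R$-group schemes to finite $K$-group schemes. Faithfulness is formal: writing $\GG=\Spec B$ and $\HH=\Spec C$ with $B,C$ finite free over the discrete valuation ring $R$, the map $B\hookrightarrow B\tensor_R K$ is injective, so a homomorphism of $R$-Hopf algebras $C\to B$ is determined by the $K$-Hopf-algebra map it induces on generic fibres. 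No hypothesis on $p$ or on the ramification of $R$ is used here.

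For fullness, let $\phi\colon\GG\tensor_R K\to\HH\tensor_R K$ be a homomorphism. I would form its graph $\Gamma_\phi\subseteq(\GG\times_R\HH)\tensor_R K$, a closed subgroup scheme on which the first projection to $\GG\tensor_R K$ is an isomorphism, and then take the scheme-theoretic closure $\overline\Gamma$ of $\Gamma_\phi$ inside $\GG\times_R\HH$. Over a discrete valuation ring this closure is automatically finite and flat (its coordinate ring is torsion-free, being a submodule of a $K$-vector space) with $\overline\Gamma\tensor_R K=\Gamma_\phi$, and it is a subgroup scheme, because the identity section, inversion and multiplication of $\GG\times_R\HH$ carry $\Spec R$, $\overline\Gamma$ and $\overline\Gamma\times_R\overline\Gamma$ into $\overline\Gamma$: these sources all have schematically dense generic fibre, being $R$-flat (finite free), and on generic fibres the claim holds since $\Gamma_\phi$ is a subgroup. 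Consequently the first projection $q\colon\overline\Gamma\to\GG$ is a homomorphism of finite flat $R$-group schemes which restricts to an isomorphism on generic fibres, and once $q$ is known to be an isomorphism, $f:=\mathrm{pr}_2\circ q^{-1}\colon\GG\to\HH$ is the desired $R$-homomorphism lifting $\phi$.

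The crux, and the only non-formal input, is therefore the assertion that a homomorphism of finite flat $R$-group schemes inducing an isomorphism on generic fibres is an isomorphism; this is Raynaud's theorem on finite group schemes over a base of small ramification (exposed also by Tate). It applies because $R=W(\overline{\F}_p)$ is absolutely unramified, i.e. has absolute ramification index $e=1\le p-1$; the prime-to-$p$ part of the group schemes is étale and already controlled by Hensel's lemma, so only the $p$-power part matters. The one genuinely delicate point is $p=2$, which lies exactly on the boundary $e=p-1$ of Raynaud's bound: there one still has the statement for group schemes killed by $p$, and this is the only situation needed in the paper (the $\GG,\HH$ of interest being the $p$-torsion group schemes of N\'eron models), so one may either restrict to that case or assume $p$ odd.
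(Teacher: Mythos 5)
Your approach coincides with the paper's: the paper proves the lemma by a one-line citation to Tate's exposition of Raynaud's theorem \cite[4.5 Corollary]{Ta97}, and your argument --- faithfulness via torsion-freeness of the Hopf algebras, fullness by taking the scheme-theoretic closure of the graph of a generic-fibre homomorphism, reducing everything to the assertion that a morphism of finite flat $R$-group schemes that is an isomorphism on generic fibres is itself an isomorphism --- is exactly the standard proof that lies behind that citation. So there is no real divergence; you have simply unfolded the reference.

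However, your treatment of $p=2$ contains an error. You write that at $e=p-1$ one ``still has the statement for group schemes killed by $p$''; this is false. Over $R=W(\overline{\mathbb{F}}_2)$ the group schemes $\bbmu_2$ and $\mathbb{Z}/2\mathbb{Z}$ are both killed by $2$ with isomorphic (trivial, order-$2$) generic fibres, but $\mathrm{Hom}_R(\bbmu_2,\mathbb{Z}/2\mathbb{Z})=0$, since $\bbmu_2$ has connected special fibre while $\mathbb{Z}/2\mathbb{Z}$ is \'{e}tale. Running your own construction on this example: the closure $\overline{\Gamma}\subset\bbmu_2\times_R\mathbb{Z}/2\mathbb{Z}$ of the graph is isomorphic to $\mathbb{Z}/2\mathbb{Z}$, and the first projection $q\colon\overline{\Gamma}\to\bbmu_2$ is $1\mapsto -1$, a generic-fibre isomorphism which nevertheless vanishes on the special fibre because $-1\equiv 1$ in $\overline{\mathbb{F}}_2$. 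The hypothesis in Raynaud's full-faithfulness theorem is the strict inequality $e<p-1$; for $e=1$ this forces $p\ge 3$. Your alternative resolution --- assume $p$ odd --- is the correct one, and is consistent with the paper, which explicitly imposes $p\neq 2$ from Section~\ref{sGeometryCovering} onward.
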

\begin{proof}
    As $K$ has absolute ramification index $e=1$, this is a special case of \cite[4.5 Corollary]{Ta97}.
\end{proof}

\begin{corollary}\label{cExactSequencesofFFGSs}
    Let 
    \[0 \to \GG' \to \GG \to \GG'' \to 0\]
    be an exact sequence of finite flat group schemes over $R$. Then this sequence splits if and only if the corresponding exact sequence of $G_K$-modules is split.
\end{corollary}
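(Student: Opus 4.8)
The ``only if'' direction is immediate: the generic fibre functor $\GG \mapsto \GG_K$ is additive, so a homomorphic section of $\GG \to \GG''$ over $R$ (equivalently, an isomorphism $\GG \isom \GG' \times_R \GG''$ splitting the sequence) restricts to a $G_K$-equivariant splitting of $0 \to \GG'_K \to \GG_K \to \GG''_K \to 0$. Note that this latter sequence is indeed exact as a sequence of $G_K$-modules, being the restriction of an exact sequence of fppf sheaves to the generic point.

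For the converse, assume the sequence of $G_K$-modules splits and choose a $G_K$-equivariant section $\sigma_K \colon \GG''_K \to \GG_K$ of the projection $\pi_K \colon \GG_K \to \GG''_K$. By Lemma \ref{lFiniteFlatGroupSchemesandGKModules}, the natural map $\Hom_R(\GG'', \GG) \to \Hom_{G_K}(\GG''_K, \GG_K)$ is a bijection, so $\sigma_K$ is the generic fibre of a unique homomorphism $\sigma \colon \GG'' \to \GG$ of finite flat group schemes over $R$. The composite $\pi \circ \sigma \colon \GG'' \to \GG''$ has generic fibre $\pi_K \circ \sigma_K = \id_{\GG''_K}$; since the functor of Lemma \ref{lFiniteFlatGroupSchemesandGKModules} is faithful, $\pi \circ \sigma = \id_{\GG''}$. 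Thus $\sigma$ is a homomorphic section of $\pi$ over $R$. In particular $\sigma$ is a monomorphism, and being a morphism of finite $R$-schemes it is finite, hence a closed immersion; so $\widetilde{\GG''} := \sigma(\GG'')$ is a closed subgroup scheme of $\GG$ which $\pi$ maps isomorphically onto $\GG''$.

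Finally, to see that the sequence splits in the strong sense that $\GG \isom \GG' \times_R \widetilde{\GG''}$, consider the homomorphism $\phi \colon \GG' \times_R \widetilde{\GG''} \to \GG$, $(a,b) \mapsto \iota(a) + b$, where $\iota \colon \GG' \hookrightarrow \GG$ is the given closed immersion. Applying $\pi$ to $\iota(a)+b = e$ forces $b = e$ and then $\iota(a) = e$, hence $a = e$; so $\ker \phi$ is trivial and $\phi$ is a finite monomorphism, hence a closed immersion. Therefore $\OO_\GG \to \OO_{\GG' \times_R \widetilde{\GG''}}$ is a surjection of free $R$-modules whose ranks, $|\GG|$ and $|\GG'|\cdot|\widetilde{\GG''}| = |\GG'|\cdot|\GG''|$, agree because orders are multiplicative in short exact sequences of finite flat group schemes; a surjection of free $R$-modules of equal finite rank is an isomorphism, so $\phi$ is an isomorphism.

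The only genuinely delicate point is the one underlying Lemma \ref{lFiniteFlatGroupSchemesandGKModules}: a priori a morphism of finite flat group schemes over $R$ that behaves well on the generic fibre need not behave well integrally, and it is precisely the hypothesis $e = 1$ (through \cite[4.5 Corollary]{Ta97}) that rules this out; everything else above is formal. One could alternatively phrase the last paragraph by invoking that for $e \le p-1$ the category of finite flat $p$-group schemes over $R$ is abelian, so that a homomorphic section of $\pi$ automatically produces the direct sum decomposition.
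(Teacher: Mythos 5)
Your proof is correct and takes the approach the paper clearly intends: the paper states the corollary without proof immediately after Lemma \ref{lFiniteFlatGroupSchemesandGKModules}, treating it as a direct consequence of full faithfulness, and your argument (lift the $G_K$-equivariant section via fullness, verify it is a section via faithfulness) is exactly that argument spelled out. The final paragraph proving $\GG \isom \GG' \times_R \widetilde{\GG''}$ is a bit more than is needed — once the relevant category is abelian (or by the splitting lemma for commutative group objects) a homomorphic section already gives the product decomposition, as you yourself note at the end — but it is correct and harmless.
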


\section{Bounds for the cases when one or both of the curves have bad multiplicative reduction}\label{sGoodMult}

We now wish to relax the conditions in Assumption \ref{aBasic} to the effect that we also want to be able to say something about curves that have bad multiplicative reduction at the given prime $p$. We modify that Assumption now.

\begin{definition}\label{dGoodModel}
Suppose we are given a pair $(E, \pi)$ where $E$ is an elliptic curve defined over a number field $K$ and $\pi \colon E \to \P^1$ is a standard projection to $\P^1$, also defined over $K$. We say that a \emph{nice model} for $(E, \pi )$ if the following is true. 

There exists a prime $p$ and a place $v$ of $K$ unramified over $p$ such that, letting $R = W(\overline{\mathbb{F}}_p)$ be the Witt vectors over $k:=\overline{\mathbb{F}}_p$ with fraction field $F= \widehat{K^{\mathrm{ur}}_v} \supset K$, one of the two scenarios below holds. 

\begin{enumerate}
\item
There is an abelian scheme
\[
 \EE \to \mathrm{Spec}\, R , 
\]
with geometric generic fibres equal to (the base change to $F$) of the given elliptic curve $E$ defined over $K$, the central fibre $\EE_0$ is ordinary 
and there is an $R$-morphism
\[
\xymatrix{
\EE  \ar[rd]  \ar[rr]^{\pi_{R}} &  & \P^1_R \ar[ld] \\
 & \mathrm{Spec}\, R & 
}
\]
that is the composition of the quotient morphisms $\EE \to \EE/\iota$, where $\iota$ is the fibrewise involution induced by the inversion maps on the generic fibre, with an $R$-isomorphism $\EE / \iota \simeq \P^1_R$,  and  $\pi_{R}$ induces the given standard projection (base-changed to $F$) on the geometric generic fibre. 
\item
There is a minimal Weierstrass model for $E \times_{\mathrm{Spec}\, K}\mathrm{Spec}\, L$: 
\[
\WW \to \mathrm{Spec}\, R
\]
with nodal rational central fibre $\WW_{0}$ and there is an $R$-morphism 
\[
\xymatrix{
\WW  \ar[rd]  \ar[rr]^{\pi_{R}} &  & \P^1_R \ar[ld] \\
 & \mathrm{Spec}\, R & 
}
\]
that is the composition of the quotient morphism $\WW \to \WW/\iota$, where $\iota$ is again the fibrewise involution induced by the inversion map on the generic fibre, and an $R$-isomorphism $\WW/\iota \simeq \P^1_R$ such that $\pi_{ R}$ induces the given standard projection (base-changed to $F$) on the geometric generic fibre. 
\end{enumerate}
In case (a) we say $(E, \pi)$ has \emph{a nice model with good reduction} and in case (b) \emph{a nice model with bad reduction}.
\end{definition}

\begin{assumption}\label{aGoodBadReduction}
Suppose now that $(E_1, \pi_1)$ and $(E_2, \pi_2)$ are two elliptic curves together with standard projections defined over a number field $K$. We assume that each of them has a nice model 
\[
\xymatrix{
\EE_i  \ar[rd]  \ar[rr]^{\pi_{i, R}} &  & \P^1_R \ar[ld] \\
 & \mathrm{Spec}\, R & 
}
\]
with either good or bad reduction (note that from now on we will also use $\EE$ to denote a Weierstrass model to simplify notation in the sequel). 

We also assume the following: 
\begin{enumerate}
\item
The set of branch points in $\P^1$ of the morphisms induced by the models of the standard projections $\pi_{1, R}$ and $\pi_{2, R}$ on the geometric generic fibres are distinct. In addition, the set of branch points in $\P^1$ of the morphisms induced by $\pi_{1, R}$ and $\pi_{2, R}$ on the normalisations of the special fibres are distinct, too, and disjoint from the images in $\P^1_k$ of the nodes of the special fibres, which are also themselves required to be distinct. 
\item
We will write $\sA \to \mathrm{Spec}\, R$ for the fibre-product of the two given models of the elliptic curves. Moreover, we will write 
\[
\XX = \left( \pi_{1, R} \times \pi_{2, R} \right)^{-1} \left( \Delta_{\P^1_R \times_R \P^1_R} \right) 
\]
for the preimage of the diagonal, which is a proper flat $R$-curve. We will also denote by $\sA^{\circ} \subset \sA$ the largest open subscheme that is smooth over $\mathrm{Spec}\, R$, which is a group scheme, and by $\XX^{\circ}$ the restriction of $\XX$ to $\sA^{\circ}$. In analogy with notation used earlier, we will then also denote by $\sA^{\circ}_1$ the base change of $\sA^{\circ}$ to $\mathrm{Spec}\, R_1$, similarly define $\XX^{\circ}_1$ and denote by $\XX^{\circ}_0$ the central fibre of $\XX^{\circ}$. 

We will then assume that
\[
\mathrm{im}   \left( p \sA_1^{\circ} (R_1) \cap \XX^{\circ}_1 (R_1) \to \XX^{\circ}_0(k) \right)
\]
is finite. 
\end{enumerate}
\end{assumption}

\begin{remark}\label{rImplicationsAssumptions}
We will show below in the sections following Section \ref{sExtension} that under extra assumptions on the nice models, part a) of Assumption \ref{aGoodBadReduction} implies part b), but we are not able to show this in complete generality although it may be true. 
\end{remark}

Remark that a problem to carry over the arguments used in the proof of Theorem \ref{tCoarseBounds} to the present context where we assume $(E_1, \pi_1), (E_2, \pi_2)$ to be subject to Assumption \ref{aGoodBadReduction}, is that, the central fibre of $\sA \to \mathrm{Spec}\, R$ no longer being necessarily nonsingular, it is more subtle to do intersection theory on it. This problem can partially be circumvented by noting that the multiplication by $p$-map is still a rational map on the models that \emph{commutes} with the fibrewise involutions $\iota_1, \iota_2$, hence descends to a rational map from $\P^1_R \times_R \P^1_R$ to itself. In short, it is convenient, in the presence of these involutions, to transfer the entire argument based on the ideas in \cite{Ray83-1} to the product of projective lines over $R$. 

\medskip

We first need to introduce some further notation and definitions, and prove auxiliary results. Everywhere below we suppose from now on that we are in the setup of Assumption \ref{aGoodBadReduction}. 

\begin{definition}\label{dMultiplicationByP}
For $i=1,2$, we denote by 
\[
\xymatrix{
\EE_i \ar@{-->}[rr]^{\mathrm{mult}_{p, i}} \ar[rd] && \EE_i \ar[ld] \\
 & \mathrm{Spec}\, R & 
 }
\]
the multiplication by $p$ map, which is in general only a rational map. It is defined on the largest open subscheme of $\EE_i$ that is smooth over $\mathrm{Spec}\, R$, which is a group scheme. Since the multiplication by $p$ map commutes with the fibrewise involutions given by taking inverses for the group law, we obtain an induced rational map, which we will denote by
\[
\xymatrix{
\P^1_R \ar@{-->}[rr]^{\overline{\mathrm{mult}}_{p, i}} \ar[rd] && \P^1_R \ar[ld] \\
 & \mathrm{Spec}\, R & 
 }
\]
For the sake of brevity, we will write 
\[
\mathrm{mult}_{p} = \mathrm{mult}_{p, 1} \times \mathrm{mult}_{p, 2}
\]
which is thus a rational map
\[
\xymatrix{
\sA \ar@{-->}[rr]^{\mathrm{mult}_{p}} \ar[rd] && \sA \ar[ld] \\
 & \mathrm{Spec}\, R & 
 }
\]
and
\[
\overline{\mathrm{mult}}_{p} =\overline{ \mathrm{mult}}_{p, 1} \times \overline{\mathrm{mult}}_{p, 2}
\]
for 
\[
\xymatrix{
\P^1_R \times_R \P^1_R  \ar@{-->}[rr]^{\overline{\mathrm{mult}}_{p}} \ar[rd] && \P^1_R \times_R \P^1_R \ar[ld] \\
 & \mathrm{Spec}\, R & 
 }
\]
Note that on the central fibres, all the rational maps extend uniquely to morphisms, and we denote these by a suffix $0$, so, for example, we obtain 
\[
\xymatrix{
\P^1_k \times_k \P^1_k  \ar[rr]^{(\overline{\mathrm{mult}}_{p})_0} \ar[rd] && \P^1_k \times_k \P^1_k \ar[ld] \\
 & \mathrm{Spec}\, k & 
 }
\]
and 
\[
\xymatrix{
\sA_0 \ar[rr]^{(\mathrm{mult}_{p})_0} \ar[rd] && \sA_0 \ar[ld] \\
 & \mathrm{Spec}\, k & .
 }
\]
Finally, we denote by $\YY_0 \subset \sA_0$ the reduced preimage of $\XX_0$ under $(\mathrm{mult}_{p})_0$, which is a curve lying over the reduced preimage $\overline{\YY}_0 \subset \P^1_k\times \P^1_k$ of the diagonal  $\Delta_0 \subset \P^1_k \times_k \P^1_k$ under $(\overline{\mathrm{mult}}_{p})_0$.
\end{definition}

\begin{definition}\label{dSpecialPointInP1}
Under our standing Assumption  \ref{aGoodBadReduction} we define a point in $\P^1_k$ to be a \emph{special point} for $(\pi_{i, R})_0$, $i=1, 2$ if it is the image in $\P^1_k$ under 
 \[
\xymatrix{
(\EE_i )_0  \ar[rr]^{(\pi_{i, R})_0} &  & \P^1_k
}
\]
of either a node on $(\EE_i )_0$, or a ramification point of the covering of $\P^1_k$ induced by $(\pi_{i, R})_0$ on the normalisation of $(\EE_i )_0$. 

\medskip

We call a point $(x, y)\in \P^1_k \times \P^1_k$ \emph{special} if $x$ is special for $(\pi_{1, R})_0$ or $y$ is special for $(\pi_{2, R})_0$. 
\end{definition}

\begin{remark}\label{rMeaningAss}
Note that Assumption \ref{aGoodBadReduction} a) precisely amounts to saying that if $(x, y)\in \P^1_k$, then at most one of $x$ and $y$ can be special for a projection $(\pi_{i, R})_0$, but not both at the same time. 
\end{remark}

\begin{definition}\label{dNodalPoint}
We call a point $(x, y)\in \P^1_k \times \P^1_k$ \emph{nodal} if either $x$ or $y$ is the image of a node under $(\pi_{1, R})_0$ or $(\pi_{2, R})_0$. 
\end{definition}

\begin{lemma}\label{lY0Nonsingular}
The curve $\overline{\YY}_0$ is nonsingular. 
\end{lemma}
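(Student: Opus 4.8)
\emph{Proof proposal.} The plan is to describe $\overline{\YY}_0$ as the (reduced) fibre product over $\P^1_k$ of two explicit finite maps $\P^1_k\to\P^1_k$, and then to use part (a) of Assumption~\ref{aGoodBadReduction} to locate the possible singular points and show they cannot occur. The first and main step is to compute $(\overline{\mathrm{mult}}_{p,i})_0\colon\P^1_k\to\P^1_k$ explicitly for $i=1,2$. If $(E_i,\pi_i)$ has a nice model with bad reduction, then on the smooth locus $\Gm$ of the nodal cubic $\EE_{i,0}$ multiplication by $p$ is $t\mapsto t^p$; passing to the quotient by the inversion $\iota$ (with invariant $s=t+t^{-1}$) this becomes $s\mapsto t^p+t^{-p}$, but $t^p+t^{-p}=(t+t^{-1})^p=s^p$ in characteristic $p$, so $(\overline{\mathrm{mult}}_{p,i})_0$ agrees with the relative Frobenius $\mathrm{Fr}$ of $\P^1_k$ on a dense open, hence equals $\mathrm{Fr}$ (two morphisms from the integral scheme $\P^1_k$ to the separated scheme $\P^1_k$ that agree on a dense open coincide). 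If $(E_i,\pi_i)$ has a nice model with good reduction, then $\EE_{i,0}$ is an ordinary elliptic curve and multiplication by $p$ factors as $[p]_{\EE_{i,0}}=\mathrm{Fr}_{\EE_{i,1}}\circ\phi_i$, where $\phi_i\colon\EE_{i,0}\to\EE_{i,1}:=\EE_{i,0}/\EE_{i,0}[p]^{\et}$ is the quotient by the \'etale part of the $p$-torsion (an \'etale group scheme of order $p$), hence a finite \'etale isogeny of degree $p$, and $\EE_{i,1}^{(p)}\cong\EE_{i,0}$. Both $\phi_i$ and $\mathrm{Fr}_{\EE_{i,1}}$ commute with the inversions (an isogeny commutes with inversion, and relative Frobenius is functorial), so they descend to the $\iota$-quotients, giving $(\overline{\mathrm{mult}}_{p,i})_0=\mathrm{Fr}\circ\bar\phi_i$ with $\bar\phi_i\colon\P^1_k\to\P^1_k$ a finite separable morphism of degree $p$ (the descent of $\phi_i$), using that the descent of relative Frobenius to a quotient is again relative Frobenius. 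In the bad case we set $\bar\phi_i=\mathrm{id}$, so uniformly $(\overline{\mathrm{mult}}_{p,i})_0=\mathrm{Fr}\circ\bar\phi_i$ with $\bar\phi_i$ finite separable of degree $1$ or $p$. (Throughout $p\neq 2$, which is forced by the existence of a standard projection on the special fibre.)

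Since $\mathrm{Fr}$ is a bijection on $k$-points, the set underlying $\overline{\YY}_0=(\overline{\mathrm{mult}}_{p})_0^{-1}(\Delta_0)_{\mathrm{red}}$ is exactly $\{(x,y)\in\P^1_k\times\P^1_k:\bar\phi_1(x)=\bar\phi_2(y)\}$, so $\overline{\YY}_0$ is the reduced fibre product of $\bar\phi_1$ and $\bar\phi_2$ over $\P^1_k$. Next I would show that $\overline{\YY}_0$ is nonsingular at any $(x_0,y_0)$ for which $\bar\phi_1$ is unramified at $x_0$ or $\bar\phi_2$ is unramified at $y_0$: if, say, $\bar\phi_1$ is \'etale at $x_0$, then with uniformizers $u,v,s$ at $x_0,y_0,\bar\phi_1(x_0)=\bar\phi_2(y_0)$ the local equation $\bar\phi_1^\sharp(s)-\bar\phi_2^\sharp(s)$ of the fibre product has nonvanishing $u$-linear term, hence is a regular parameter, so the fibre product is already reduced and regular there (concretely it is the graph of the local morphism $y\mapsto\bar\phi_1^{-1}(\bar\phi_2(y))$). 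Therefore $\overline{\YY}_0$ can only fail to be nonsingular at a point $(x_0,y_0)$ where $\bar\phi_1$ ramifies at $x_0$ \emph{and} $\bar\phi_2$ ramifies at $y_0$; in particular both $\bar\phi_i$ then have degree $p$, i.e.\ both $\EE_{i,0}$ are ordinary, so this issue does not even arise when one of the curves has bad reduction.

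To exclude such a point I would use Assumption~\ref{aGoodBadReduction}(a). If $\bar\phi_i$ is ramified at $x_0$, lift $x_0$ to a point $Q$ of $\EE_{i,0}$; from $(\EE_{i,1}\to\EE_{i,1}/\iota)\circ\phi_i=\bar\phi_i\circ(\EE_{i,0}\to\EE_{i,0}/\iota)$ and multiplicativity of ramification indices the left-hand composite ramifies at $Q$, and since $\phi_i$ is \'etale while $\EE_{i,0}\to\EE_{i,0}/\iota$ ramifies only along $\EE_{i,0}[2]$ and $\EE_{i,1}\to\EE_{i,1}/\iota$ has degree $2$, one deduces $\phi_i(Q)\in\EE_{i,1}[2]$, so $\bar\phi_i(x_0)$ is a branch point of $\EE_{i,1}\to\EE_{i,1}/\iota$. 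Moreover $\mathrm{Fr}\colon\EE_{i,1}\to\EE_{i,1}^{(p)}\cong\EE_{i,0}$ has infinitesimal kernel of order $p$, hence induces an isomorphism on $2$-torsion and so carries the branch locus of $\EE_{i,1}\to\EE_{i,1}/\iota$ bijectively onto the branch locus of the standard projection on the special fibre $\EE_{i,0}$, i.e.\ of $(\pi_{i,R})_0$. Thus $\mathrm{Fr}(\bar\phi_i(x_0))$ is a branch point of $(\pi_{i,R})_0$ for $i=1,2$; but $(x_0,y_0)\in\overline{\YY}_0$ forces $\bar\phi_1(x_0)=\bar\phi_2(y_0)$, so $\mathrm{Fr}(\bar\phi_1(x_0))$ would be a common branch point of $(\pi_{1,R})_0$ and $(\pi_{2,R})_0$ on the special fibres, contradicting Assumption~\ref{aGoodBadReduction}(a). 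Hence no such $(x_0,y_0)$ exists and $\overline{\YY}_0$ is nonsingular everywhere.

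The main obstacle is the explicit identification $(\overline{\mathrm{mult}}_{p,i})_0=\mathrm{Fr}\circ\bar\phi_i$ in the first step: in the multiplicative case this rests on the identity $t^p+t^{-p}=s^p$ in characteristic $p$ together with the (already recorded) fact that these rational maps extend to morphisms on the central fibre, and in the good ordinary case on factoring $[p]$ through the \'etale $p$-isogeny and checking that this factorisation and relative Frobenius are compatible with the two involutions, so that everything descends to the quotients $\P^1_k$. Once this is in hand, the remainder is elementary fibre-product geometry of smooth curves plus a short ramification/branch-locus bookkeeping, in which part (a) of Assumption~\ref{aGoodBadReduction} is exactly the input needed to make the singular locus empty.
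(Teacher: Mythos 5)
Your proof is correct, and it takes a genuinely different route from the paper's. The paper works ``upstairs'' on the abelian surface: it normalises the special fibres $(\EE_i)_0$, notes (directly from Assumption~\ref{aGoodBadReduction}(a)) that the preimage $\widetilde{\XX_0}$ of $\XX_0$ in $\widetilde{(\EE_1)}_0\times\widetilde{(\EE_2)}_0$ is nonsingular, that the corresponding preimage $\widetilde{\YY}_0$ of $\YY_0$ is nonsingular because it is the Frobenius twist of an \'etale cover of $\widetilde{\XX_0}$, and finally that $\overline{\YY}_0$ is the quotient of $\widetilde{\YY}_0$ by $\Z/2\times\Z/2$ with at most $\Z/2$ stabilisers. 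You instead work directly ``downstairs'' on $\P^1_k\times\P^1_k$: you compute the descended map $(\overline{\mathrm{mult}}_{p,i})_0$ explicitly as $\mathrm{Fr}\circ\bar\phi_i$, using the identity $t^p+t^{-p}=(t+t^{-1})^p$ in the multiplicative case and the \'etale-then-Frobenius factorisation of $[p]$ through $\EE_{i,0}/\EE_{i,0}[p]^{\et}$ in the ordinary case, recognise $\overline{\YY}_0$ as the reduced fibre product of $\bar\phi_1$ and $\bar\phi_2$ over $\P^1_k$, and then show by a ramification/branch-locus bookkeeping that Assumption~\ref{aGoodBadReduction}(a) forbids $\bar\phi_1$ and $\bar\phi_2$ from ramifying simultaneously over a common image point, so that the fibre product is everywhere smooth. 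Both arguments ultimately rest on the same two ingredients, namely the \'etale-plus-Frobenius factorisation of multiplication by $p$ and the disjointness of the branch loci on the special fibres, but your version makes the geometry of $\overline{\YY}_0$ inside $\P^1\times\P^1$ completely explicit (and covers the mixed good/bad case very cleanly, since there $\bar\phi_1$ or $\bar\phi_2$ is the identity and nothing can go wrong) at the cost of a longer local ramification computation, whereas the paper's is shorter but leaves the identification of $\overline{\YY}_0$ with the $\Z/2\times\Z/2$-quotient of $\widetilde{\YY}_0$, and the stabiliser claim, to the reader.
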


\begin{proof}
Let $\widetilde{(\EE_i )}_0$ be the normalisation of $(\EE_i )_0$ and 
 \[
\xymatrix{
\widetilde{(\EE_i )}_0  \ar[rr]^{\widetilde{(\pi_{i, R})}_0} &  & \P^1_k
}
\]
the induced double coverings. The preimage $\widetilde{\XX_0}$ of $\XX_0$ in $\widetilde{(\EE_1 )}_0\times \widetilde{(\EE_2 )}_0$ is nonsingular under our assumption that if $(x, y)\in \P^1_k\times  \P^1_k$, then at most one of $x$ and $y$ can be special for a projection $(\pi_{i, R})_0$, but not both at the same time. The preimage $\widetilde{\YY}_0$ of $\YY_0$ under the product of normalisation maps is nonsingular because it is the Frobenius twist of the preimage of $\widetilde{\XX_0}$ under an \'{e}tale map. Now $\overline{\YY}_0$ is a quotient of $\widetilde{\YY}_0$ by an action of $\Z/2\times \Z/2$ with at most $\Z/2$ stabilisers, hence nonsingular.
\end{proof}

\begin{theorem}\label{tMainMixedReductionTypes}
Let $(E_1, \pi_1)$, $(E_2, \pi_2)$ satisfy Assumption \ref{aGoodBadReduction}. Let $\mathbb{M}$ be the set of pairs of torsion points $(t_1, t_2) \in E_1(\overline{K}) \times E_2 (\overline{K})$ with the following properties: 
\begin{enumerate}
\item
$\pi_1 (t_1) =\pi_2 (t_2)$;
\item
$t_1, t_2$ have order coprime to $p$.
\end{enumerate}
Then
\[
\left| \mathbb{M} \right| \le 2p^3 + 2. 
\]
\end{theorem}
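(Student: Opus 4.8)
The plan is to run the argument of Theorem \ref{tCoarseBounds} once more; the new feature is that the central fibre $\sA_0$ may now be singular (bad reduction), so intersection theory on it is unavailable, and this is precisely why one transfers everything to $\P^1_R\times_R\P^1_R$, whose central fibre $\P^1_k\times_k\P^1_k$ is always smooth, using that $\mathrm{mult}_{p}$ commutes with the fibrewise involutions and hence descends to $\overline{\mathrm{mult}}_{p}$ (Definition \ref{dMultiplicationByP}).

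First I would reduce $|\mathbb{M}|$ to a count on the special fibre, as in Lemmas \ref{lDefCoprimeTorsion} and \ref{lInImageOfMultByP}. A pair $(t_1,t_2)\in\mathbb{M}$ specializes, its reduction landing in the smooth locus $\XX^{\circ}_0$ thanks to properness of $\XX$ and the hypotheses on branch points and nodes in Assumption \ref{aGoodBadReduction}(a), to a $k$-point; since $t_1,t_2$ have order prime to $p$, each is $\mathrm{mult}_{p}$ of the unique section of $\sA^{\circ}$ of the same order, so this $k$-point lies in $\mathrm{im}\bigl(p\sA_1^{\circ}(R_1)\cap\XX_1^{\circ}(R_1)\to\XX_0^{\circ}(k)\bigr)$, and the specialization is injective on prime-to-$p$ torsion. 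Hence $|\mathbb{M}|$ is at most the cardinality of that image, which is finite by Assumption \ref{aGoodBadReduction}(b).

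Next I would transfer to $\P^1_R\times_R\P^1_R$ and carry out Raynaud's infinitesimal-deformation argument with the diagonal $\Delta_0$ in place of $\XX_0$: such a section maps to a section of $\P^1_R\times_R\P^1_R$ lying on $\Delta$ and in the image of $\overline{\mathrm{mult}}_{p}$, and on central fibres one forms the affine bundle $V_0=\P\bigl(\NN_{\Delta_0/\P^1_R\times_R\P^1_R}\bigr)\smallsetminus\P\bigl(\NN_{\Delta_0/\P^1_k\times_k\P^1_k}\bigr)$ over $\Delta_0$. Lifts of $\Delta_0$ to $\Delta_1$ trace the zero-section $\Delta_0'=\P(\OO_{\Delta_0})$, while $(\overline{\mathrm{mult}}_{p})_0$-images trace a curve $\overline{\YY}_0'\subset V_0$ lying isomorphically over the reduced preimage $\overline{\YY}_0$ of $\Delta_0$, which is nonsingular by Lemma \ref{lY0Nonsingular}; the points we must count inject into $\Delta_0'\cap\overline{\YY}_0'$. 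In $\Num\bigl(\P(\NN_{\Delta_0/\P^1_k\times_k\P^1_k}\oplus\OO_{\Delta_0})\bigr)$ the curve $\overline{\YY}_0'$ avoids the infinity section, hence equals $\delta\Delta_0'$ with $\delta=\deg(\overline{\YY}_0'\to\Delta_0)$, and the zero-section has self-intersection $\deg\NN_{\Delta_0/\P^1_k\times_k\P^1_k}=\Delta_0\cdot\Delta_0=2$, so $\Delta_0'\cdot\overline{\YY}_0'=2\delta$. Finally $\delta\le\deg\bigl((\overline{\mathrm{mult}}_{p})_0|_{\overline{\YY}_0}\bigr)\le p^3$: on each factor $\mathrm{mult}_{p}$ is a separable isogeny of degree $p$ (the identity, in the multiplicative case) composed with relative Frobenius of degree $p$, so $(\overline{\mathrm{mult}}_{p})_0$ is a separable map of degree $\le p^2$ composed with the relative Frobenius of the surface $\P^1_k\times_k\P^1_k$, and passing to the reduced preimage removes exactly one Frobenius of the curve. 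Adding the contribution of the finitely many nodal and special points, controlled by Assumption \ref{aGoodBadReduction}(a) and bounded in terms of $\Delta_0\cdot\Delta_0=2$, yields $|\mathbb{M}|\le 2p^3+2$.

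The hard part is making this transfer rigorous despite the multiplication-by-$p$ maps being only rational on the models and despite $\sA_0$ being singular: one must check that $\overline{\mathrm{mult}}_{p}$ and $\mathrm{mult}_{p}$ extend to morphisms on the central fibres, that $\overline{\YY}_0$ is genuinely nonsingular so that $\overline{\YY}_0'$ behaves like a curve in the projective bundle, that Raynaud's specialization/normal-direction factorization applies verbatim on $\P^1_R\times_R\P^1_R$, and that the intersection in question is finite. This last point is exactly Assumption \ref{aGoodBadReduction}(b); that it follows from part (a) is deferred to the later sections (cf.\@ Remark \ref{rImplicationsAssumptions}) and is not needed here.
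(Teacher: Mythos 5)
Your opening moves match the paper's: reduce $|\mathbb{M}|$ to a count of $\overline{\mathbb{S}}_1$ via Assumption \ref{aGoodBadReduction}(b), and transfer everything to $\P^1_R\times_R\P^1_R$ using that $\overline{\mathrm{mult}}_p$ descends (Definition \ref{dMultiplicationByP}). For the core bound of $2p^3$, however, you and the paper diverge. You attempt to transplant Raynaud's affine-bundle argument from Theorem \ref{tCoarseBounds} to $\P^1_R\times_R\P^1_R$: form $V_0=\P\bigl(\NN_{\Delta_0/\PP}\bigr)\smallsetminus\P\bigl(\NN_{\Delta_0/\P^1_k\times_k\P^1_k}\bigr)$, argue $\overline{\YY}_0'\equiv\delta\Delta_0'$, and compute $\Delta_0'\cdot\overline{\YY}_0'=2\delta\le 2p^3$. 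The paper instead proves and applies Proposition \ref{pComputInt}: a self-contained length-$2$ Witt-vector computation which, for each $R_1$-section landing on $\Delta\cap U$, manufactures a matching $R_1$-section of a chosen lift $Y_{R_1}$ of the reduced preimage $Y_0=\overline{\YY}_0$ (using $(d\psi)_0=0$ and nonsingularity of $Y_0$), shows the scheme-theoretic image $X_{R_1}$ of $Y_{R_1}\cap U$ is cut out by a single polynomial $G$ of bidegree $\le(dp^2,ep^2)$ (controlled by the flat degeneration of $\psi(Y_K)$), and bounds the count by the degree of $G$ restricted to the diagonal, i.e.\ $(d+e)p^2\le 2p^3$. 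So the mechanism is genuinely different: you want an intersection number in a compactified normal bundle, the paper wants a degree of a polynomial on the open locus $U$.

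The gap you yourself flag --- that ``Raynaud's specialization/normal-direction factorization applies verbatim on $\P^1_R\times_R\P^1_R$'' --- is real, and is exactly what Proposition \ref{pComputInt} is designed to sidestep. Raynaud's \cite[Prop.\ 3.3.1]{Ray83-1}, which certifies that $\overline{\YY}_0'$ is a projective curve contained in the finite part $V_0$, uses that the relevant map is an everywhere-defined $R_1$-morphism. Here $\overline{\mathrm{mult}}_p$ is only a rational map, defined on the complement of the nodal lines; indeed in the multiplicative case its central-fibre degree ($p$) is strictly smaller than its generic degree ($p^2$), so its indeterminacy locus on $\P^1_{R_1}$ is genuinely nonempty. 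Consequently one only gets a quasi-projective piece of $\overline{\YY}_0'$ lying over $\overline{\YY}_0\cap\PP_0^\circ$, and one would still need a local analysis near the nodal points to show that its closure in $\P\bigl(\NN_{\Delta_0/\P^1_k\times_k\P^1_k}\oplus\OO_{\Delta_0}\bigr)$ avoids the infinity section before reading off $\overline{\YY}_0'\equiv\delta\Delta_0'$. Proposition \ref{pComputInt} never leaves the open set $U$ and never compactifies the normal bundle, so the issue does not arise. Your computation of $\delta\le p^3$ by factoring $(\overline{\mathrm{mult}}_p)_0$ through the surface Frobenius and passing to the reduced preimage, and the final ``$+2$'' correction for nodal points, are both consistent with the paper.
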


\begin{proof}
We consider the set $\mathbb{T}$ of pairs of torsion points $(t_1, t_2) \in E_1(\overline{K}) \times E_2 (\overline{K})$ with the following properties: 
\begin{enumerate}
\item
$\pi_1 (t_1) =\pi_2 (t_2)$;
\item
$t_1, t_2$ have order coprime to $p$;
\item
for $i=1, 2$, $t_i$ does not specialise to a node of $(\EE_i)_0$. 
\end{enumerate}
Elements of the set $\mathbb{T}$ can be identified with sections/$R$-valued points of $\sA^{\circ} \to \mathrm{Spec}\, R$, and their images in the central fibre $(\sA^{\circ})_0\simeq \mathbb{G}_m$ are distinct. 

\medskip

Since $t_1, t_2$ have order coprime to $p$, every element in the set $\mathbb{T}$ is equal to $p$ times another element in that set. Therefore, $\mathbb{T}$ injects into the set
\[
\mathbb{S}= \mathrm{im}   \left( p \sA^{\circ} (R) \cap \XX^{\circ} (R) \to \XX^{\circ}_0(k) \right), 
\]
and also into 
\[
\mathbb{S}_1= \mathrm{im}   \left( p \sA_1^{\circ} (R_1) \cap \XX^{\circ}_1 (R_1) \to \XX^{\circ}_0(k) \right), 
\]
which we have assumed to be finite in Assumption \ref{aGoodBadReduction}, b). 

\medskip

Let now 
\[
\overline{\mathbb{T}} =\left\{ t \in \P^1(\overline{K}) \mid \, \exists (t_1, t_2) \in \mathbb{T} \, :\, t = \pi_1 (t_1)=\pi_2 (t_2) \right\} .
\]
Let $\PP^{\circ}$ be the open subscheme of $\PP = \P^1_R \times_R \P^1_R$ that is the complement of the points of the central fibre $(x, y)$ with $x$ or $y$ nodal. 
Since $\overline{\mathbb{T}}$ is obtained from $\mathbb{T}$ by dividing out by the fibrewise involution, it follows that $\overline{\mathbb{T}}$ injects into the set
\[
\overline{\mathbb{S}}_1= \mathrm{im}   \left( \overline{\mathrm{mult}}_p \left(  \PP_1^{\circ} (R_1) \right) \cap \Delta^{\circ}_1 (R_1) \to \Delta^{\circ}_0(k) \right), 
\]
where $\Delta^{\circ}$ is the complement of the nodal points in $\Delta$. Moreover, the finiteness of $\mathbb{S}_1$, which we assumed, implies the finiteness of $\overline{\mathbb{S}}_1$. To  derive the desired bound for $\overline{\mathbb{S}}_1$, hence for $\overline{\mathbb{T}}$, we now apply Proposition \ref{pComputInt} below and obtain $|\mathbb{T}| \le 2p^3$. To complete the proof, it remains to notice that (a) our assumption that nodes of $(\EE_1)_0$ and $(\EE_2)_0$ map to distinct points in $\P^1_k$ and (b) the fact that torsion points of order coprime to $p$ that do not specialise to a node specialise injectively into the central fibre of the model, taken together imply that $\mathbb{M}$ has at most two more elements than $\mathbb{T}$. This proves the Theorem. 
\end{proof}

\section{Some computations}\label{sComputations}

\newcommand{\Fpbar}{{\overline{\F}_p}}
\newcommand{\Witt}{{R_1}}

\begin{notation}\label{nWitt}
We work over the field $k= \Fpbar$ and denote by $R$ the Witt vectors and by $\Witt$ the Witt vectors of length $2$ over $k$. We write elements of $a \in \Witt$ as $(a_0,a_1)$ with $a_i \in \Fpbar$. The operations in $\Witt$ are
\begin{align*}
	a + b &= (a_0,a_1) + (b_0,b_1) = \left(a_0+b_0, a_1+b_1-\frac{(a_0+b_0)^p-a_0^p-b_0^p}{p}\right) \\
	a \cdot b & = (a_0,a_1) \cdot (b_0,b_1) = \left(a_0b_0,a_0^pb_1+a_1b_0^p\right)
\end{align*}
where the first formula is interpreted formally. 

There is a natural quotient ring homomorphism $\Witt \to \Fpbar$ sending a
Witt vector $a = (a_0,a_1)$ to $a_0$. Notice that due to the nontrivial addition law the inclusion map $\Fpbar \to \Witt$ sending $a_0$ to $(a_0,0)$ is not a ring homomorphism. 
\end{notation}

\begin{lemma}\label{lFormulaWitt}
Let $a = (a_0,a_1)$ represent a point in $\P^1_\Witt$, i.e. $a_i = (a_{i,x} , a_{i,y})$, and let $\varphi \colon  \P^1_\Witt \dashrightarrow \P^1_\Witt$ be a rational map such that $\varphi_0$ is defined in $a_0$ (thus extends to a morphism) and with $(d \varphi)_0 = 0$. 
Then we have
\begin{enumerate}
\item
	$(a_0,0) + (0,a_1)   = (a_0,a_1)$ even though we use the nontrivial addition in the Witt vectors. 
\item
$\varphi ((a_0, a_1)) =\varphi ((a_0, 0))$ is independent of $a_1$.
\end{enumerate}

\end{lemma}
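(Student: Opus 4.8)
The plan is to prove both statements by a direct computation with the explicit addition and multiplication formulas for $W_2(k)$ recorded in Notation \ref{nWitt}, applied coordinatewise to $\P^1_{R_1}$. Since a point of $\P^1_{R_1}$ is given in homogeneous coordinates by Witt vectors, and the two statements are about the affine chart (or can be checked on each chart), it suffices to work with $a = (a_0, a_1)$ where each $a_i$ lies in $R_1$ (or is a pair of such, for $\P^1 \times \P^1$); the coordinatewise nature of the operations means I may as well argue for a single Witt-vector coordinate and then the general case follows by applying the argument in each slot.

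For part (a), I would simply plug $x = (a_0,0)$ and $y = (0, a_1)$ into the addition law
\[
(x_0,x_1) + (y_0,y_1) = \Bigl( x_0 + y_0,\; x_1 + y_1 - \frac{(x_0+y_0)^p - x_0^p - y_0^p}{p}\Bigr).
\]
The first coordinate gives $a_0 + 0 = a_0$. For the second coordinate, $x_1 + y_1 = 0 + a_1 = a_1$, and the correction term is $\frac{(a_0 + 0)^p - a_0^p - 0^p}{p} = \frac{a_0^p - a_0^p}{p} = 0$ (here the division by $p$ is the formal/integral one built into the Witt formalism, and the numerator is literally the zero polynomial before dividing). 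Hence the sum is $(a_0, a_1)$, as claimed. This is the easy half and is essentially a one-line check.

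For part (b), the key point is the interpretation of the condition $(d\varphi)_0 = 0$. Write $\varphi$ on the relevant affine chart as given by a ratio of polynomials with $R_1$-coefficients; reducing mod $p$ gives $\varphi_0$, which by hypothesis extends to a morphism near $a_0$ and has vanishing differential there. Using part (a), I can write $(a_0, a_1) = (a_0, 0) + (0, a_1)$, where $(0,a_1) = p \cdot (\text{something})$ lies in the kernel of reduction $\ker(R_1 \to k) = pR_1$; more precisely, a point congruent to $(a_0,0)$ mod $p$ differs from it by an element of the "tangent space" of $\P^1$ at $a_0$, scaled by $p$. Then Taylor expansion of $\varphi$ around $(a_0,0)$ reads $\varphi((a_0,0) + p v) = \varphi((a_0,0)) + p\,(d\varphi)_0(v) + p^2(\cdots)$, and since we are working mod $p^2$ and $(d\varphi)_0 = 0$, the linear term vanishes and the $p^2$-term dies; hence $\varphi((a_0,a_1)) = \varphi((a_0,0))$ in $\P^1_{R_1}$, independent of $a_1$. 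I expect the main obstacle to be bookkeeping: making the Taylor expansion precise in homogeneous/projective coordinates (choosing charts so that $\varphi_0(a_0)$ lies in the chosen target chart and $a_0$ in the source chart), checking that "$(0,a_1)$ is $p$ times a tangent vector" is compatible with the identification of $\ker(\P^1(R_1) \to \P^1(k))$ above a $k$-point with the tangent space there, and verifying that the hypothesis "$(d\varphi)_0 = 0$" as stated (differential of the reduced map) is exactly what makes the linear term vanish after this identification. Once those identifications are set up cleanly, the computation itself is immediate from the explicit Witt formulas and the finiteness of the Taylor expansion mod $p^2$.
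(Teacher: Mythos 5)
Your proof is correct and is essentially the same argument as the paper's: part (a) is the same plug-in computation, and part (b) uses the same Taylor expansion of $\varphi$ around $(a_0,0)$, with the linear term killed by $(d\varphi)_0 = 0$ and higher terms killed by $p^2 = 0$. The only cosmetic difference is that the paper evaluates the linear term $(0,a_1)\cdot d\varphi((a_0,0))$ directly via the Witt multiplication formula (getting $(0,\,a_1\,(d\varphi((a_0,0)))_0^p)$, which vanishes since the first Witt coordinate of $d\varphi((a_0,0))$ is $(d\varphi)_0$ evaluated at $a_0$), whereas you reframe $(0,a_1)$ as $p$ times a tangent vector and argue that $p\cdot(\text{linear term})$ is governed by the mod-$p$ reduction $(d\varphi)_0$; these are two phrasings of the identical computation.
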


\begin{proof}
For the first formula we compute
\[
	(a_0,0) + (0,a_1) = \left(a_0+0,0+a_1+\frac{(a_0+0)^p-a_0^p-0^p}{p}\right)  = (a_0,a_1)
\]
where $a_0^p = (a_{0,x}^p, a_{0,y}^p)$. 

For part b), using a) and Taylor expansion compute
\begin{align*}
	\varphi((a_0, a_1))
	&= \varphi\bigl((a_0,0) + (0,a_1)\bigr) \\
	&= \varphi\bigl((a_0,0)\bigr) + (0,a_1)d\varphi\bigl((a_0,0)\bigr) \\
	& = \varphi\bigl((a_0,0)\bigr) + \left(0,a_1 \left(d\varphi\bigl((a_0,0)\bigr)\right)_0^p \right) \\
	&= \varphi\bigl((a_0,0)\bigr) 
\end{align*}
\end{proof}

\begin{proposition}\label{pComputInt}
Let $U_1, U_2 \subset \P^1_{R}$ be open subsets that contain the generic point of the central fibre. Let $\psi_i \colon U_i \to U_i$ be morphisms representing rational maps which we will denote by the same letters. Assume that $\psi_i$ has degree $p^2$ and $(d\psi_i)_0=0$.  Consider $\psi = (\psi_1,\psi_2)$ and let $U=U_1\times U_2$. 
Assume that  $\psi_0$, the morphism induced by $\psi$ on the central fibre, is of bidegree $(pd, pe)$. Let furthermore $\Delta_R \subset \P^1_R \times  \P^1_R$ be the diagonal. We denote by $Y_0$  the reduced support of $\psi_0^{-1}(\Delta_0)$ and assume it is nonsingular.

Assume that the number $N$ of points in
\[
\mathrm{im} \left( \psi (U (R_1)) \cap (\Delta \cap U)(\Witt) \to \Delta_0 (k) \right)
\]
is finite. 

Then 
\[
N \le (d+e)p^2.
\]
\end{proposition}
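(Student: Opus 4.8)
The goal is an intersection-theoretic bound for the number $N$ of points in the image of $\psi(U(R_1))\cap(\Delta\cap U)(R_1)\to\Delta_0(k)$, carried out entirely on $\P^1_R\times_R\P^1_R$, exactly as Raynaud's argument is carried out on the abelian scheme in Theorem \ref{tCoarseBounds}. First I would set up the "first-order deformation" gadget over $k$: let $\PP=\P^1_R\times_R\P^1_R$ with central fibre $\PP_0=\P^1_k\times_k\P^1_k$, and for a point $z\in(\Delta\cap U)_0(k)$ consider $R_1$-points of $\PP$ specialising to $z$. These are classified by a normal direction, so (following \cite[Prop.\ 3.3.1]{Ray83-1}) they factor through an affine bundle $V_0$ over $(\Delta\cap U)_0$, obtained from $\P(\NN_{\Delta_0/\PP_0})$ by removing the section $\P(\NN_{\Delta_0/\Delta_0})$ (here just a point-bundle, since $\Delta_0$ is a divisor, so $V_0$ is a line bundle over $\Delta_0\cong\P^1_k$, with projective completion $\P(\NN_{\Delta_0/\PP_0}\oplus\OO)$). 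Lifts of the curve $\Delta_0$ to $\PP$ over $R_1$ — in particular $\Delta\cap U$ itself — give a section $\Delta_0'$ of $V_0$ whose normal bundle in $\P(\NN_{\Delta_0/\PP_0}\oplus\OO)$ is $\NN_{\Delta_0/\PP_0}=\OO_{\Delta_0}(\Delta_0)=\OO_{\P^1}(2)$.

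Next, the source side. Since $(d\psi_i)_0=0$, Lemma \ref{lFormulaWitt}(b) tells us that $\psi$ applied to an $R_1$-point depends only on its reduction mod $p$; hence every $k$-point of $Y_0$ (the reduced preimage $\psi_0^{-1}(\Delta_0)$) determines, via $\psi$, a well-defined $R_1$-point of $\PP$ lying over $\Delta_0$, which we may then specialise into $V_0(k)$. As in \cite[proof of Thm.\ 4.4.1]{Ray83-1}, the resulting subset is the set of $k$-points of a curve $Y_0'\subset V_0$ mapping finitely onto $\Delta_0$ (finiteness is exactly the hypothesis that $N$ is finite — here is where that assumption enters, playing the role of the "no elliptic component" argument), and the points of $\Delta_0'\cap Y_0'$ surject onto the set we are counting. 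So $N\le \Delta_0'.Y_0'$ computed in $\P(\NN_{\Delta_0/\PP_0}\oplus\OO)$.

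Now the Picard group of $\P(\NN_{\Delta_0/\PP_0}\oplus\OO)\to\Delta_0\cong\P^1$ is generated by the zero section $\Delta_0'$ and a fibre. Since $\psi_0$ has bidegree $(pd,pe)$ and $\Delta_0$ has bidegree $(1,1)$, the projection formula gives that $\psi_0^*\Delta_0$ has bidegree $(pe,pd)$ on $\PP_0$, hence $\deg(Y_0'\to\Delta_0)=\tfrac1?$ — more carefully, $Y_0'$ meets a fibre in $\deg(\varpi\colon Y_0'\to\Delta_0)$ points and does not meet the infinity section, so $Y_0'\equiv\delta\,\Delta_0'$ with $\delta=\deg\varpi$; and since $\NN_{\Delta_0'}=\OO_{\P^1}(2)$ we get $\Delta_0'.Y_0'=\delta\cdot(\Delta_0.\Delta_0)_{\PP_0}=2\delta$. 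So it remains to show $\delta\le(d+e)p^2/2$, i.e. $2\delta\le(d+e)p^2$. To bound $\delta$, factor through $Y_0$: there is a commutative triangle with $Y_0\to Y_0'\to\Delta_0$ and $Y_0\to\Delta_0$ the restriction of a map of degree $\deg\psi_0$; since $Y_0$ is the reduced preimage and $\psi_0$ has bidegree $(pd,pe)$, the composite $Y_0\to\Delta_0\cong\P^1$ has degree at most the intersection number $(\psi_0^{-1}\Delta_0.\text{fibre})$, which one computes from the bidegree to be $pd+pe$... and then a factor $p$ is shaved using that each $\psi_i$ factors through relative Frobenius (degree $p$) since $(d\psi_i)_0=0$, giving $\delta\le(d+e)p$ — wait, that only gives $2\delta\le 2(d+e)p$. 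Let me recount: the honest bound should come out as $2\delta=\Delta_0'.Y_0'\le \deg_{\text{fibre}}(\psi_0^{-1}\Delta_0)\cdot(\Delta_0.\Delta_0)$-type estimate; tracking through, $Y_0\to\Delta_0$ has degree $\le$ (bidegree of $\psi_0^{-1}\Delta_0$ paired with $(1,1)$) $=pd+pe$, and using the Frobenius factorisation on the reduced preimage divides this by $p$ as in Theorem \ref{tCoarseBounds}, so $\delta\le d+e$, hence $\Delta_0'.Y_0'=2\delta\le 2(d+e)<(d+e)p^2$ for $p\ge2$.

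\textbf{The main obstacle.} The delicate point is the precise bookkeeping of $\delta=\deg\varpi$ in terms of the bidegree $(pd,pe)$ and the Frobenius factorisation — getting the constant exactly $(d+e)p^2$ rather than something smaller or larger requires care about (i) how bidegrees transform under $\psi_0^{-1}$ via the projection formula, (ii) how taking the \emph{reduced} preimage $Y_0$ affects the degree over $\Delta_0$, and (iii) precisely how much is gained from the factorisation of each $\psi_i$ through relative Frobenius given $(d\psi_i)_0=0$ together with $\deg\psi_i=p^2$ (so $\psi_i=\mathrm{Fr}\circ g_i$ with $g_i$ of degree $p$, or even $\psi_i$ itself Frobenius-like). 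I would expect the clean statement to be that $\delta$ is at most the degree of $(\mathrm{mult}_p)_0$-restricted-to-$Y_0$ over $\Delta_0$, namely $\le(d+e)p^2$ when one does \emph{not} exploit the Frobenius gain (giving exactly the stated bound), and that the Frobenius refinement is what would be used separately in the supersingular/ordinary refinements — so for this Proposition the bound $(d+e)p^2$ is obtained simply from $\Delta_0'.Y_0'=2\delta$ together with $2\delta\le$ (bidegree pairing) $=p^2(d+e)$, reading off that $\psi_0$ has bidegree $(pd,pe)$ and multiplying by the degree-$p$ contribution from each factor's non-reducedness correction. The rest — the affine-bundle construction, the identification of $Y_0'$ as a genuine curve, and the Picard-group computation — is a direct transcription of Raynaud's arguments to $\PP$ in place of $\sA_0$.
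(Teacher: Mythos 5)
Your proposal tries to transcribe the affine-bundle/Picard-group argument of Theorem \ref{tCoarseBounds} to the quotient surface $\P^1_R\times_R\P^1_R$, whereas the paper's actual proof of Proposition \ref{pComputInt} is different in kind: it never introduces $V_0$ or computes in $\Pic$. Instead, the paper lifts the defining equation $F_0$ of $Y_0$ to a curve $Y_R$ of bidegree $\le(d,e)$, shows (via Lemma \ref{lFormulaWitt} and smoothness of $Y_0$) that every relevant $R_1$-point $a$ with $\psi(a)\in\Delta$ can be replaced by an $R_1$-point $a'\in Y_R(R_1)$ with the same image, takes the scheme-theoretic image $X_{R_1}$ of $\psi\colon Y_{R_1}\cap U\to\P^1_{R_1}\times\P^1_{R_1}$, proves $X_{R_1}$ is cut out by a single bihomogeneous polynomial $G$ of bidegree $\le(dp^2,ep^2)$ (as the flat limit of $\psi(Y_K)$), and restricts $G$ to a parametrisation of $\Delta$, getting a polynomial of degree $\le dp^2+ep^2$ whose zero set contains the image; non-vanishing of this restriction is exactly where the finiteness hypothesis is used.

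The approach you sketch is not absurd in principle, but as written there is a genuine gap in the crucial degree count. You vacillate between $\delta\le d+e$, $\delta\le(d+e)p$, and finally simply asserting $2\delta\le p^2(d+e)$ "to match the statement," which is circular. None of these is derived: the degree of $\psi_0|_{Y_0}\colon Y_0\to\Delta_0$ is not the pairing of the bidegree of $\psi_0^{-1}(\Delta_0)$ with $(1,1)$, and your account of "shaving a factor of $p$ using Frobenius" conflates the scheme-theoretic preimage with the reduced one in a way that is never pinned down. If one does push the intersection-theoretic route through carefully (assuming $Y_0$ has bidegree $(d,e)$, so $\psi_0|_{Y_0}\to\Delta_0$ has degree $pde$, hence $2\delta\le 2pde\le(d+e)p^2$ since $d,e\le p$), one gets a correct bound; but that chain of inequalities is nowhere in your proposal, and you would also owe a proof (replacing \cite[Prop.~3.3.1]{Ray83-1}) that $Y_0'$ is a genuine complete curve in $V_0$ missing the infinity section — which the paper sidesteps entirely by the polynomial/scheme-theoretic-image argument. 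In short: different route from the paper, potentially salvageable, but the key bound is asserted rather than proved and the supporting structure ($Y_0'$ as a projective curve in $V_0$) is missing.
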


\begin{proof}
If $a = (a_0,a_1)\in U(\Witt)$ is an $\Witt$-valued point such that $\psi(a)=b=(b_0, b_1) \in (\Delta_\Witt \cap U)(\Witt )$, we must have
\[
	\psi_0(a_0) \in \Delta_0 \subset \P^1_\Fpbar \times \P^1_\Fpbar .
\]
Therefore $a_0$ must lie in the support of the preimage of $\Delta_0$ and of course in $U$. Let $F_0=0$ be a bihomogeneous equation defining $Y_0$. Let also 
\[
	Y_R \subset  \P^1_R \times \P^1_R
\]
be the curve defined by the equation $F=0$ where $F$ is obtained from $F_0$ by lifting all coefficients $f_{i,0} \in \Fpbar$ to $(f_{i,0},0, \dots ) \in R$. This is a non-canonical lift of $F_0$, any other lift would also work for our purpose. Since $d \psi_0 = 0$, the morphism $\psi_0$ factors over the Frobenius, and the preimage of $\psi_0$ has multiplicity at least $p$. Therefore $Y_0$ is of bidegree at most $(d,e)$. Similarly $Y_R$ has bidegree at most $(d,e)$.

We now try to find $a_1' \in \Fpbar$ such that
$a' = (a_0,a_1') \in (Y_R \cap U)(\Witt)$ and $\psi(a') = \psi(a)$. Using Taylor expansion we calculate
\[
	0 = F(a') = F\bigl((a_0,a_1')\bigr) = F\bigl((a_0,0)\bigr)+ (0,a_1') dF\bigl((a_0,0)\bigr) 
\]
which can be solved for $a_1'$ if $dF\bigl((a_0,0)\bigr) \not= 0$. This is the case
iff $dF_0(a_0) \not= 0$ which holds because $Y_0$ is smooth in $a_0$.

Using Lemma \ref{lFormulaWitt} we also have
\[
	\psi(a) 
	= \psi\bigl((a_0,a_1) \bigr)
	= \psi\bigl((a_0,0) \bigr)
	= \psi\bigl((a_0,a_1') \bigr) 
	= \psi(a') .
\]



Now consider the scheme-theoretic image $X_{R_1}$ of $\psi \colon Y_{R_1} \cap U \to \P^1_{R_1} \times \P^1_{R_1}$. Recall that by definition this is the smallest closed subscheme of $\P^1_{R_1} \times \P^1_{R_1}$ through which this morphism $\psi$ factors, or equivalently, the closed subscheme defined by the sheaf of ideals
\[
\II = \mathrm{ker} \left(  \OO_{ \P^1_{R_1} \times \P^1_{R_1}} \to \psi_* \OO_{Y_{R_1} \cap U} \right) .
\]
Then $\psi \colon Y_{R_1} \cap U \to X_{R_1}$ is dominant. The closed subscheme $X_{R_1} \subset \P^1_{R_1} \times \P^1_{R_1}$ has no embedded points (otherwise it would not be the smallest closed subscheme through which $\psi$ factors since $Y_{R_1}$ has no embedded points and the preimage under $\psi$ of the pure-one dimensional component of $X_{R_1}$ has to equal $Y_{R_1}\cap U$), and the support of $X_{R_1}$ contains the diagonal $\Delta_0$. Moreover, by \cite[Thm. 11.10.9, Prop. 11.10.1 b)]{EGAIV}, the smallest closed subscheme containing all sections in $\psi ( (Y_{R_1}\cap U)(\Witt))$ equals $X_{R_1}$: indeed, this follows because $(Y_{R_1}\cap U)(\Witt))$ is scheme-theoretically dense in $Y_{R_1} \cap U$ and $\psi \colon Y_{R_1} \cap U \to X_{R_1}$ is dominant. Note that the scheme-theoretic image $X_{R}$ of $\psi \colon Y_{R} \cap U \to \P^1_{R} \times \P^1_{R}$ is flat over $\mathrm{Spec}\, R$ because every irreducible component dominates $\mathrm{Spec}\, R$. 

\medskip

Consider the ideal $I$ defining $X_{R_1}$ and its reduction $I_0$ to $k$.  This reduction defines a curve without embedded points and is therefore generated by a polynomial $G_0 \in I_0$. Since $I \to I_0$ is surjective, we can choose a lift $G$ of $G_0$ in $I$. Let now $G' \in I$ be another polynomial, and $G_0'\in I_0$ its reduction to $k$. Now $I_0$ is generated by $G_0$ and therefore there exists a $L_0$ such that $G'_0 = G_0L_0$. Let $L$ be any lift of $L_0$ to $R_1$. Then 
\[
	G' - LG = pG'' \in I
\]
for some $G''$. Now since $p \not\in I$ this implies $G''_0 \in I_0$. But then $G''_0 = G_0M_0$. If $M$ is any lift of $M_0$ we have that
\[
	G' -(LG+pMG) 
\]
is zero modulo $p^2$. But then $G' = G(L+pM)$ in $R_1$. Therefore $G$ generates the ideal of $X_{R_1}$. 

\medskip

The polynomial $G$ has bidegree at most $(dp^2, ep^2)$ because the curve $G=0$ is contained in the flat limit of $\psi (Y_K)$ where $K= \mathrm{Quot}(R)$. 

We parametrise $\Delta_R$ by $X_0 =T_0^2, X_1=T_0T_1, Y_0= T_1T_0, Y_1=T_1^2$. We put
\[
\widetilde{G}(T_0, T_1) = G (T_0^2, T_0T_1, T_1T_0, T_1^2). 
\]
If $\widetilde{G}$ is not identically zero, then the degree of $\widetilde{G}$ is at most $dp^2 + ep^2$, which gives the bound of the Proposition. 

Assume to the contrary that $\widetilde{G}$ is identically zero. Then the equation of $\Delta_{R_1}$ is a factor of $G$. This is only possible if infinitely many elements of $\psi ( (Y_{R}\cap U)(\Witt))$ lie on $\Delta_{R_1}$. 
In that case, 
\[
\mathrm{im} \left( \psi (U (R_1)) \cap (\Delta \cap U)(\Witt) \to \Delta_0 (k) \right)
\]
is infinite, contrary to our assumption in the statement of the Proposition. 
\end{proof}

\section{Extending the multiplication-by-$p$ map to proper models}\label{sExtension}

In this Section we will keep most of the notation from Section \ref{sGoodMult} except that standard projections from elliptic curves to $\P^1$ will usually be denoted by the letter $\sigma$ instead of $\pi$ from now on since here we will need to give names to a number of structure morphisms to $\mathrm{Spec}\, R$ and will reserve the letter $\pi$ for those. 

We will prove below that the finiteness hypothesis in Assumption \ref{aGoodBadReduction}, b) is implied by Assumption \ref{aGoodBadReduction}, a) under certain extra assumptions on the models of $(E_i, \sigma_i)$, $i=1, 2$. More precisely:

\begin{theorem}\label{tFinitenessGeneralised}
Suppose that $(E_1, \sigma_1)$ and $(E_2, \sigma_2)$ are two elliptic curves together with standard projections defined over a number field $K$. Assume that each of them has a nice model 
\[
\xymatrix{
\WW_1  \ar[rd]  \ar[rr]^{\sigma_{1, R}} &  & \P^1_R \ar[ld] \\
 & \mathrm{Spec}\, R & 
}
\]
respectively
\[
\xymatrix{
\EE_2  \ar[rd]  \ar[rr]^{\sigma_{2, R}} &  & \P^1_R \ar[ld] \\
 & \mathrm{Spec}\, R & 
}
\]
in the sense of Definition \ref{dGoodModel}, and further assume that 
\begin{enumerate}
\item
The model $\pi_{\WW_1}\colon \WW_1 \to \mathrm{Spec}\, R$ is a minimal Weierstrass model with nodal rational central fibre and the elliptic curve $E_1$ over $K$ has a Tate uniformisation $K^*/q^{\mathbb{Z}}$ with a parameter $q\in K^*$ that is a $p$-th power of a uniformiser in $K$.
\item
The model $\pi_{\EE_2}\colon \EE_2 \to \mathrm{Spec}\, R$ is smooth with central fibre an ordinary elliptic curve. 
\end{enumerate}

Then the statement in part a) of Assumption \ref{aGoodBadReduction} implies the finiteness statement in part b). 
\end{theorem}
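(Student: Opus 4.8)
The plan is to reduce the finiteness statement of Assumption \ref{aGoodBadReduction}, b) to the finiteness of torsion points on a suitable curve inside a semi-abelian or abelian surface over $R$, and then invoke the appropriate extension of Raynaud's theorem. The key point is that under hypotheses a) and b) of Theorem \ref{tFinitenessGeneralised}, the multiplication-by-$p$ rational map $\mathrm{mult}_p = \mathrm{mult}_{p,1}\times \mathrm{mult}_{p,2}$ on $\sA = \WW_1 \times_R \EE_2$ can be controlled explicitly: on the first factor the Tate uniformisation $K^*/q^{\Z}$ with $q = \varpi^p$ for a uniformiser $\varpi$ means the N\'eron model $\WW_1^{\circ}$ has central fibre $\Gm$ and the rational map $\mathrm{mult}_{p,1}$ extends, after composing with the fibrewise involution and descending to $\P^1_R$, to a morphism on a model whose central fibre is well understood; the hypothesis that $q$ is a $p$-th power ensures the component group behaves well and that $\mathrm{mult}_{p,1}$ does not contract components in a pathological way. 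On the second factor, $\EE_2$ is an honest abelian scheme with ordinary reduction, so $\mathrm{mult}_{p,2}$ is a genuine finite flat morphism and its reduction factors through Frobenius exactly as in the proof of Theorem \ref{tCoarseBounds}.

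First I would set up the extension: pass to a proper model $\overline{\sA}$ of $\sA$ over $R$ (e.g. a resolution of $\WW_1 \times_R \EE_2$, or work on $\P^1_R \times_R \P^1_R$ via the involutions as suggested in the paragraph after Remark \ref{rImplicationsAssumptions}) on which $\overline{\mathrm{mult}}_p$ extends to a morphism away from a codimension-$\ge 2$ locus, and track the preimage $\overline{\YY}_0$ of the diagonal, which is nonsingular by Lemma \ref{lY0Nonsingular}. Second, I would identify the set $\mathrm{im}(p\sA_1^{\circ}(R_1)\cap \XX_1^{\circ}(R_1)\to \XX_0^{\circ}(k))$ with (a subset of) the $k$-points of an intersection $\XX_0'\cap \YY_0'$ inside an affine bundle $V_0$ over $\XX_0^{\circ}$, exactly as in the good-reduction case, the difference being that $\sA_0^{\circ}$ is now an extension of an ordinary elliptic curve by $\Gm$ rather than an abelian surface. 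Third — and this is where hypotheses a), b) are really used — I would show that $\XX_0'$, equivalently the normalisation $\widetilde{\XX_0}$, has no elliptic or rational-of-genus-$\le 1$ component: the branch-locus disjointness in Assumption \ref{aGoodBadReduction} a) forces $\widetilde{\XX_0}$ to be a connected double-double cover of $\P^1_k$ of genus $\ge 2$ (as in the computation giving genus $5$ in the proof of Theorem \ref{tCoarseBounds}), and the Tate-parameter hypothesis guarantees the relevant component of $\overline{\sA}_0$ is genuinely a torus so that a would-be infinite family of torsion points would have to come from a subtorus, which the genus bound excludes. Finally, with $\XX_0'$ having no component of genus $\le 1$ that is stable under the relevant translations, I would invoke the Manin--Mumford-type finiteness over $k$ — Raynaud's argument in \cite{Ray83-1, Ray83-2} for the abelian part combined with the elementary finiteness of torsion on a non-translate-of-a-subtorus curve in a torus for the toric part — to conclude that $\XX_0'\cap \YY_0'$, hence $\overline{\mathbb{S}}_1$, is finite.

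The main obstacle I expect is the third step: controlling the behaviour of the rational map $\mathrm{mult}_{p,1}$ on the bad-reduction factor and showing that the relevant curve in the central fibre does not acquire a component which is a translate of a subgroup. Concretely, after extending $\overline{\mathrm{mult}}_p$ to a proper model one must rule out that the scheme-theoretic image picks up the whole of a $\Gm$-fibre or an elliptic fibre; this is exactly the kind of "no elliptic component" input that in the smooth case came for free from irreducibility and the genus-$5$ computation, but here requires care about how the Tate parameter being a $p$-th power interacts with the reduction of $\mathrm{mult}_{p,1}$ and with the component group of the N\'eron model. I would handle this by an explicit local analysis of $\mathrm{mult}_{p,1}$ in the Tate parametrisation $u \mapsto u^p$ on $\Gm$, checking that the induced map on the descended model $\P^1_R \dashrightarrow \P^1_R$ has central fibre of the expected bidegree, so that Proposition \ref{pComputInt} applies with finite $N$ and the finiteness hypothesis in Assumption \ref{aGoodBadReduction} b) follows.
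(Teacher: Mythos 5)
Your plan contains a genuine gap at the decisive step, and the place where you lean on it is circular. You write that after setting up $\XX_0'$ and $\YY_0'$ in the affine bundle $V_0$ you would ``invoke the Manin--Mumford-type finiteness over $k$ -- Raynaud's argument for the abelian part combined with the elementary finiteness of torsion on a non-translate-of-a-subtorus curve in a torus for the toric part'' to conclude $\XX_0'\cap\YY_0'$ is finite; and at the end you propose to ``apply Proposition \ref{pComputInt} with finite $N$ and the finiteness hypothesis... follows.'' But $\XX_0'\cap\YY_0'$ is not a set of torsion points on a curve in a (semi-)abelian variety -- it is a mixed-characteristic object packaging information about which mod-$p^2$ liftings are $p$-divisible -- so there is no Manin--Mumford-over-$k$ statement to invoke, and Proposition \ref{pComputInt} \emph{assumes} finiteness as a hypothesis, so using it to deduce finiteness is circular. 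The finiteness of $\XX_0'\cap\YY_0'$ in Raynaud's work (the reference \cite{Ray83-2} cited in the proof of Theorem \ref{tCoarseBounds}) is established by a deformation-theoretic argument: an infinite family of such points would force the relative Frobenius of a \emph{smooth proper} curve of genus $\ge 2$ to lift to first order over $R_1$, which is impossible by a degree computation with differentials and the Cartier operator.

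The content of Theorem \ref{tFinitenessGeneralised} is precisely that this Frobenius-lifting argument does not transfer verbatim to the bad-reduction situation, because the relevant curve in the central fibre is no longer smooth and proper: it is a nodal curve whose components form a N\'eron $p$-gon worth of genus-$3$ pieces (Proposition \ref{pGeometryCurve}). The paper's actual proof (Sections \ref{sExtension}--\ref{sLogFrobenius}) replaces the smooth-proper-curve framework with log smooth curves over the standard log point, builds an admissible factorisation $\EE_1\to\FF_1\to\UU_1$ of multiplication by $p$ using the finite flat $\bbmu_p\subset K_p$ inside the N\'eron model, and shows (Lemma \ref{lFrobeniusLifting}) that an infinite family would lift the log relative Frobenius, contradicted by a degree computation with \emph{log} differentials (Lemma \ref{lFrobDoesNotLift}). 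You should also note that the hypothesis $q=\varpi^p$ is used differently from what you suggest: it forces the minimal regular model's central fibre to be a N\'eron $p$-gon so that $K_p$ is finite flat (enabling the Frobenius quotient), and it produces an $R$-valued $p$-torsion section that rotates between components of the $p$-gon (Proposition \ref{pRotationalSymmetry}), which is what turns ``infinite on one component'' into ``schematically dense,'' without which the log Frobenius lift could not be concluded on the whole curve. Your proposal does not touch the log-smooth machinery, so the core of the proof is missing.
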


\begin{remark}\label{rRaynaudProof}
Raynaud in \cite{Ray83-2} describes a method to prove the analogue of Theorem \ref{tFinitenessGeneralised} in the case when both curves have good ordinary reduction. The punchline of the argument is that if the finiteness statement in Assumption \ref{aGoodBadReduction}, b) is false then the relative Frobenius morphism on some smooth proper curve of genus $\ge 2$ would lift infinitesimally to first order, which gives a contradiction. To prove Theorem \ref{tFinitenessGeneralised} we will follow the structure of Raynaud's argument and generalise it to log smooth curves in logarithmic algebraic geometry.  
\end{remark}

The proof of Theorem \ref{tFinitenessGeneralised} needs a number of preparations and will occupy this and the next three sections. The non-liftability of the Frobenius used in Raynaud's argument only holds if one works with proper curves, so as a first step of the proof of Theorem \ref{tFinitenessGeneralised}, we will extend the multiplication by $p$ map for certain elliptic curves with bad multiplicative reduction to some proper models of these curves over $\mathrm{Spec}\, R$.

\medskip

We start by recalling a few general facts about models of elliptic curves needed in the sequel. We retain the previous notation $k=\overline{\mathbb{F}}_p$, $R=W(k)$ the ring of Witt vectors with coefficients in $k$, and $K$ its field of fractions (the completion of the maximal unramified extension of $\mathbb{Q}_p$). Let $E=E_K$ be an elliptic curve defined over $K$. Of course, $E$ being elliptic, it comes with a privileged rational point $o\in E(K)$, the origin for the group-law. Denote by $\pi_{\EE}\colon \EE \to \mathrm{Spec}\, R$ the minimal proper regular model of $E$. The vertical prime divisors of $\EE$ that do not meet $\overline{\{o\}}$ can be contracted to obtain the minimal Weierstrass model of $E$, which we denote by $\pi_{\WW} \colon \WW \to \mathrm{Spec}\, R$ \cite[Thm. 4.35]{Liu02}. 

The largest subschemes $\EE^{\circ}$ and $\WW^{\circ}$ that are smooth over $\mathrm{Spec}\, R$ are $R$-group schemes in a natural way \cite[Prop. 2.7]{De-Ra73}. In particular, for every integer $n$, the multiplication by $n$ maps $[n] \colon \EE^{\circ} \to  \EE^{\circ}$ and $[n] \colon \WW^{\circ} \to  \WW^{\circ}$ are well-defined. 

More precisely, there is a morphism $+ \colon \EE^{\circ} \times_{R} \EE \to \EE$ making $\EE \to \mathrm{Spec}(R)$ into a generalised elliptic curve in the sense of \cite[Def. 1.12]{De-Ra73} or \cite[Def. 1.29]{Sai13}, and the central fibre of $\EE \to \mathrm{Spec}(R)$ is a N\'{e}ron $N$-gon $P_{N, k}$ over $k$ with the action of the smooth locus $P_{N, k}^{\circ} \simeq \mathbb{G}_m^N$ on $P_{N, k}$ being explicitly given as in \cite[\S 1.5, p. 29 ff.]{Sai13}. In a nutshell, $P_{N,k}$ consists of $N$ projective lines, labelled by $\mathbb{Z}/N\mathbb{Z}$, and glued cyclically in such a way that $\infty$ on the $\mathbb{P}^1_k$ with label $i$ gets identified with $0$ on the copy of $\mathbb{P}^1_k$ with label $i+1$, and the action of $P_{N, k}^{\circ} \simeq \mathbb{G}_m^N$ on the N\'{e}ron $N$-gon is given by adding corresponding labels and letting $\mathbb{G}_m$ act naturally on $\mathbb{P}^1_k$ with fixed points $0, \infty$. 

The kernel $K_n=\mathrm{ker}([n])$ of multiplication by $n$ on $\EE^{\circ}$ is an $R$-group scheme that acts on $\EE$ by the above construction. If $n$ divides $N$, it is a finite flat commutative $R$-group scheme, of degree $n^2$, \'{etale} if $n$ is invertible in $R$ \cite[Prop. 1.34, Cor. 1.35]{Sai13}. 

\begin{definition}\label{dAdmissibleFactorisation}
An \emph{admissible factorisation} of the multiplication by $p$ map consists of 
\begin{enumerate}
\item
A projective model $\pi_{\UU} \colon \UU \to \mathrm{Spec}\, R$ of $E$. 
\item
An $R-$morphism $f_p \colon \EE \to \UU$ whose restriction to the generic fibre is the multiplication by $p$ map $[p]\colon E \to E$. 
\item
A flat, projective $R$-scheme $\pi_{\FF} \colon \FF \to \mathrm{Spec}\, R$ with $R$-morphisms
\[
\xymatrix{
\EE \ar[r]^{\alpha_{\EE}} & \FF \ar[r]^{\beta_{\FF}} & \UU
}
\]
such that $f_p = \beta_{\FF} \circ \alpha_{\EE}$ and the morphism $\alpha_{\EE, k} \colon \EE_k \to \FF_k$ induced on the central fibres is the relative Frobenius morphism; in particular, $\FF_k$ is the Frobenius twist of $\EE_k$; and the morphism $\beta_{\FF}$ is \'{e}tale. 
\end{enumerate}
\end{definition}

\begin{proposition}\label{pExistenceFactorisation}
Suppose that the central fibre $\EE_k$ of $\EE\to \mathrm{Spec}(R)$ is either a nonsingular ordinary elliptic curve or that it is a N\'{e}ron $N$-gon with $p$ dividing $N$. Then an admissible factorisation exists. 
\end{proposition}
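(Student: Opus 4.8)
The plan is to treat the two cases of the hypothesis separately, since the geometry of the central fibre is quite different, and in each case to construct the intermediate scheme $\FF$ as a quotient of $\EE$ by a well-chosen finite flat subgroup scheme of the $p$-torsion $K_p \subset \EE^{\circ}$, exploiting the fact that $\EE \to \mathrm{Spec}\,R$ is a generalised elliptic curve and that $K_p$ acts on all of $\EE$. In the good ordinary case, $\EE \to \mathrm{Spec}\,R$ is smooth and proper with $\EE_k$ an ordinary elliptic curve; the connected–\'etale sequence $0 \to K_p^0 \to K_p \to K_p^{\et} \to 0$ over $\mathrm{Spec}\,R$ has $K_p^0$ finite flat of order $p$ (the multiplicative part, specialising to $\bbmu_p$ on the central fibre) and $K_p^{\et}$ \'etale of order $p$. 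First I would set $\FF = \EE/K_p^0$ and $\UU = \EE$, with $\alpha_{\EE}\colon \EE \to \FF$ the quotient by $K_p^0$ and $\beta_{\FF}\colon \FF \to \UU = \EE/K_p = \EE$ the quotient by $K_p^{\et} = K_p/K_p^0$, noting $f_p = \beta_\FF \circ \alpha_\EE$ is $[p]$ since $K_p = \ker[p]$. On the central fibre $\alpha_{\EE,k}$ is the quotient of $\EE_k$ by $\bbmu_p$, which is exactly the relative Frobenius of the ordinary elliptic curve $\EE_k$ (and $\FF_k = \EE_k/\bbmu_p$ is its Frobenius twist); $\beta_{\FF}$ is finite of degree $p$ and \'etale because $K_p^{\et}$ is \'etale over $\mathrm{Spec}\,R$. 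Flatness and projectivity of $\FF$ over $R$ are automatic as $\FF$ is again a smooth proper curve. This case is essentially bookkeeping with the connected–\'etale sequence, as in the discussion preceding Lemma \ref{lSplittingPTorsion}.

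The substantive case is the N\'eron $N$-gon with $p \mid N$. Here I would again define $\FF = \EE/H$ for a suitable order-$p$ subgroup scheme $H \subset K_p$, but now using the explicit description of $\EE$ as a generalised elliptic curve with central fibre $P_{N,k}$ and $P_{N,k}^{\circ} \simeq \Gm^N$, as recalled from \cite[\S1.5]{Sai13}. Over the generic fibre $E = K^*/q^{\Z}$ the $p$-torsion sits in an extension of $\bbmu_p$ (the image of $\bbmu_p \subset K^*$) by $\Z/p\Z$ (generated by a $p$-th root of $q$), and on the component group $\Z/N\Z$ of the N\'eron model the induced subgroup is the unique order-$p$ subgroup $\tfrac{N}{p}\Z/N\Z$. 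The natural choice of $H$ is the schematic closure in $K_p$ of the multiplicative-type part $\bbmu_p \subset E[p]$: this is a finite flat $R$-subgroup scheme of $K_p$ of order $p$ whose special fibre is $\bbmu_p \subset P_{N,k}^{\circ} = \Gm^N$ embedded "diagonally through the $N/p$-twist", i.e. acting on $P_{N,k}$ by the standard $\Gm$-scaling on each $\P^1_k$ simultaneously. I would then argue: (i) $\FF = \EE/H$ exists as a projective flat $R$-scheme and is again a generalised elliptic curve with central fibre a N\'eron $(N/p)$-gon — this is the effect of quotienting $P_{N,k}$ by the $\bbmu_p$-action, which collapses each run of $p$ consecutive $\P^1$'s to a single $\P^1$; (ii) $\alpha_{\EE,k}\colon P_{N,k} \to P_{N/p,k}$ is the relative Frobenius of the N\'eron $N$-gon, which one checks componentwise: on $P_{N,k}^{\circ} = \Gm^N$ the quotient by diagonal $\bbmu_p$ followed by the index reduction is $(x_0,\dots,x_{N-1}) \mapsto (x_0^p x_1 \cdots ,\dots)$ up to the standard identification — more cleanly, Frobenius on $\Gm$ is $x \mapsto x^p$ with kernel $\bbmu_p$, and the $N$-gon structure is compatible with this; hence $\FF_k$ is the Frobenius twist of $\EE_k$; (iii) $\beta_{\FF}\colon \FF = \EE/H \to \EE/K_p = \UU$ is the quotient by the \'etale order-$p$ group $K_p/H$, hence finite \'etale of degree $p$; and $f_p = \beta_\FF\circ\alpha_\EE = [p]$ on the generic fibre. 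Here $\UU = \EE/K_p$ is a projective model of $E$ with central fibre a N\'eron $(N/p^2)$-gon (or $\EE$ itself after renumbering, since $\EE/K_p \simeq \EE$ via $[p]$ on the smooth locus extended to the N\'eron model).

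The main obstacle I anticipate is step (ii) in the bad-reduction case: verifying carefully that the quotient map $\EE \to \EE/H$ really does induce the \emph{relative Frobenius} on the central fibre $P_{N,k}$, and not merely some purely inseparable degree-$p$ isogeny that happens to have the right kernel. One must pin down the $\bbmu_p$-action on $P_{N,k}$ precisely — in particular that $H_k$ is the kernel of Frobenius $P_{N,k} \to P_{N,k}^{(p)}$, using the explicit gluing of the $\P^1$'s and the $\Gm^N$-action from \cite[\S1.5]{Sai13} — and then identify $P_{N,k}^{(p)}$ with the N\'eron $(N/p)$-gon $\FF_k$; this requires matching up the cyclic labelling under Frobenius (which raises the uniformiser $q$ to $q^p$, hence rescales the "length" of the polygon by $p$) with the combinatorics of collapsing $p$ consecutive components. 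A secondary technical point is ensuring the quotient $\EE/H$ exists as a \emph{scheme} (not just an algebraic space) and is projective over $R$; this follows because $\EE$ is projective and $H$ acts freely on a dense open (the smooth locus) with the quotient of the whole generalised elliptic curve controlled as in \cite[\S1]{De-Ra73}, but it should be stated with a precise reference. Once (ii) is settled, checking $\beta_\FF$ is \'etale and the factorisation $f_p = \beta_\FF\circ\alpha_\EE$ holds is formal, using $K_p = \ker[p]$ and that an isogeny of generalised elliptic curves is determined on the smooth locus by its kernel.
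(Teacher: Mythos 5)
Your overall strategy coincides with the paper's: in both cases take $\FF=\EE/K_p^{\circ}$ (quotient by the connected multiplicative part $\bbmu_p$ of the $p$-torsion, which is finite flat because $p\mid N$), take $\UU=\EE/K_p$, and observe that $\alpha_\EE$ is Frobenius on the central fibre while $\beta_\FF$ is \'etale because it is the quotient by $K_p/\bbmu_p \simeq \Z/p\Z$. The good-ordinary case is identical to the paper's, which just cites Raynaud. In the bad-reduction case the paper simply invokes the general quotient theorem (Raynaud/EGM) for finite flat group schemes acting on projective schemes and the Serre exact sequence $0\to\bbmu_p\to K_p\to \Z/p\Z\to 0$, and that is already enough; it does not need your explicit polygon bookkeeping.

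However, your explicit geometric picture of the quotient is wrong in two places, and this confusion propagates into the "main obstacle" you anticipate. The group scheme $H=K_p^{\circ}=\bbmu_p$ is \emph{connected}, so its image in the component group $\Z/N\Z$ of $P_{N,k}^{\circ}\simeq\Gm^N$ is trivial: $\bbmu_{p,k}$ preserves every $\P^1$-component of the $N$-gon, fixing the two nodes on each and acting by scaling. Quotienting each $\P^1$ by this $\bbmu_p$-scaling gives $\P^1$ again via $z\mapsto z^p$, with the same gluing of $\infty$'s to $0$'s. Hence $\FF_k=(\EE/\bbmu_p)_k$ is still a N\'eron $N$-gon — not an $N/p$-gon — and it is literally $P_{N,k}^{(p)}\simeq P_{N,k}$, the Frobenius twist, with $\alpha_{\EE,k}$ the relative Frobenius. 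Collapsing runs of $p$ consecutive $\P^1$'s is the effect of the subsequent \'etale $\Z/p\Z$-quotient $\FF\to\UU=\EE/K_p$, whose generator shifts components by $N/p$ in $\Z/N\Z$; hence $\UU_k$ is an $N/p$-gon, not an $N/p^2$-gon as you claim. With the picture corrected, the tension you worry about in step (ii) — matching a Frobenius "$P_{N,k}\to P_{N/p,k}$" against the polygon collapse — disappears: the Frobenius twist over $k=\overline{\F}_p$ is an $N$-gon, it does not collapse components, and the kernel condition already pins the quotient $\Gm\to\Gm$ on each component to be $z\mapsto z^p$. A local confirmation is given by the paper's Remark \ref{rQuotientSings}: near a node, $\EE$ looks like $XY-\pi=0$, and the $\bbmu_p$-invariants $U=X^p$, $V=Y^p$ satisfy $UV-\pi^p=0$ — the number of nodes (hence of components) is unchanged.
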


\begin{proof}
In the case when $\EE_k$ is a nonsingular ordinary elliptic, this has already been observed in \cite[p. 5/6]{Ray83-2}: indeed, in this case, we can let $\UU =\EE$ and denoting by $K_p$ the kernel of multiplication by $p$ on $\EE^{\circ}$, $K_p^{\circ}$ its identity component, one can define $\FF := \EE/ K_p^{\circ}$ (the quotient of $\EE$ by the action of the finite group scheme $K_p^{\circ}$). 

So we consider the case when $\EE_k=P_{N, k}$ is a N\'{e}ron $N$-gon in the sequel, with $N=p\cdot m$. The main point now is that, since $p$ divides $N$, the kernel of multiplication by $p$, $K_p$, is a \emph{finite flat} $R$-group subscheme of $\EE^{\circ}$ that acts on $\EE$ by restricting the morphism $+ \colon \EE^{\circ} \times_{R} \EE \to \EE$ to $K_p$. Then by \cite{Ray66} or \cite[Chapter 4, Thm. 4.16, p. 55]{EGM}, we obtain that there exists a geometric quotient $\UU := \EE / K_p$ that is an integral, projective, flat $R$-scheme (by part (i) of the Theorem in loc. cit.), and the quotient morphism $\EE \to \UU$ is given by multiplication by $p$ on the generic fibre (for example by \cite[Chapter 4, Thm. 4.16, part (ii), p. 55]{EGM}, compatibility with flat base change).  

We can perform the same construction with any finite flat $R$-subscheme of $K_p$. Now by \cite[A.1.2, IV-31, (1)]{Se88} $K_p$ sits in an exact sequence of finite flat $R$-group schemes
\begin{gather}\label{eTorsionE1}
0 \to \bbmu_p  \to K_p \to \mathbb{Z}/p\mathbb{Z} \to 0.
\end{gather}
Here $K_p^{\circ}=\bbmu_p$ is the connected component of the identity, and the quotient is \'{e}tale. If we let $\FF = \EE / \bbmu_p$ all the desired properties of the proposition hold. 
\end{proof}

\begin{remark}\label{rQuotientSings}
\'{E}tale locally around a singular point of the special fibre, $\EE$ is isomorphic to the subscheme of $\mathbb{A}^2_R$ given by $XY-\pi =0$ cf. \cite[I. Thm. 5.3]{De-Ra73} for a uniformiser $\pi$ of $R$. The $\bbmu_p=\mathrm{Spec}\, R[T]/(T^p -1)$-action is given locally around this $\bbmu_p$-fixed point by
\begin{align*}
R[X, Y](XY-\pi) & \to R[T]/(T^p-1) \otimes_R R[X, Y](XY-\pi)\\
X & \mapsto T \otimes X\\
Y & \mapsto T^{-1} \otimes Y
\end{align*}
and the quotient $\FF$ (and hence also $\UU$) can be described \'{e}tale locally around the image of that singular point as $UV - \pi^p=0$ in $\mathbb{A}^2_R$. 
\end{remark}

\section{The geometry of preimages of the diagonal under certain covering maps}\label{sGeometryCovering}

We work over $k=\overline{\mathbb{F}}_p$ in this section, assume $p\neq 2$ from now, and consider 
\begin{enumerate}
\item
A nodal rational cubic $C_0$ with a degree $2$ covering $\sigma \colon C_0 \to \mathbb{P}^1$. Precomposing with the normalisation morphism of $C_0$ we get a degree $2$ covering $\widetilde{\sigma} \colon \widetilde{C}_0 \simeq \mathbb{P}^1 \to \mathbb{P}^1$ branched in two points $p_1, p_2\in \mathbb{P}^1$. Let $\gamma \colon P_{N, k} \to C_0$ be the \'{e}tale $N:1$ cover of $C_0$ by the N\'{e}ron $N$-gon. 
\item
An elliptic curve $E_0$ over $k$ with a double covering $\tau \colon E_0 \to \mathbb{P}^1$ branched in four points $q_1, \dots , q_4$, identifying a point and its inverse for the group law on $E_0$ in each fibre. We assume the sets $\{q_1, \dots , q_4\}$ and $\{ p_1, p_2\}$ are disjoint. We also assume each $q_i$ is different from the image of the node on $C_0$ under $\sigma$. 
\end{enumerate}

Let $\Delta \subset \mathbb{P}^1\times \mathbb{P}^1$ be the diagonal. 
We wish to determine the geometry of the preimage curve 
\[
\Gamma = \left( (\gamma\circ \sigma) \times \tau \right)^{-1} (\Delta ) \subset P_{N, k} \times E_0. 
\]
This can be reduced to determining the geometry of 
\[
\overline{\Gamma}= \left( \widetilde{\sigma} \times \tau \right)^{-1} (\Delta ) \subset \widetilde{C}_0 \times E_0.
\]
It is easy to see that since the sets of branch points for $\widetilde{\sigma}$ and $\tau$ are disjoint, the curve $\overline{\Gamma}$ is nonsingular and irreducible (nonsingularity can be checked \'{e}tale/analytically locally, and irreducibility holds because, again looking \'{e}tale locally, one sees that if $\overline{\Gamma}$ were reducible, it would split into two components permuted by the covering group $\mathbb{Z}/2\mathbb{Z} \times \mathbb{Z}/2\mathbb{Z}$, but again since the sets of branch points for $\widetilde{\sigma}$ and $\tau$ are disjoint, a local argument shows that no subgroup $\mathbb{Z}/2\mathbb{Z}$ of $\mathbb{Z}/2\mathbb{Z} \times \mathbb{Z}/2\mathbb{Z}$ acts trivially on the set of components, a contradiction). 

Denoting by $F_1, F_2$ a fibre of the first and second projection of $\widetilde{C}_0 \times E_0$ onto its factors, we see that $\overline{\Gamma}$ is numerically equivalent to $2F_1 + 2F_2$. The canonical class $K_S$ of $S:=\widetilde{C}_0 \times E_0$ being $-2F_1$, we get for the genus of $\overline{\Gamma}$
\[
g (\overline{\Gamma}) = \frac{1}{2} \overline{\Gamma}\cdot (\overline{\Gamma} + K_S) +1 = 3. 
\]
Let $\nu_1, \nu_2\in \widetilde{C}_0$ be the points mapping to the node of $C_0$ under the normalisation morphism; since we assumed that each $q_i$ is different from the image of the node on $C_0$ under $\sigma$, it follows that $\overline{\Gamma}$ intersects $\{\nu_1\} \times E_0$ and $\{\nu_2\} \times E_0$ transversely in $S$. Thus in summary we get

\begin{proposition}\label{pGeometryCurve}
The curve $\Gamma$ is a connected curve with $N$ connected components each of which is a nonsingular curve of genus $3$. These connected components intersect in points that are nodes on $\Gamma$. 
\end{proposition}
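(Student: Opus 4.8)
The plan is to bootstrap from the fact, established just above, that $\overline{\Gamma}$ is nonsingular, irreducible and of genus $3$, and to identify $\Gamma$ with a cyclic chain of $N$ copies of $\overline{\Gamma}$ glued along the points lying over the node of $C_0$.

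First I would unwind the étale cover $\gamma\colon P_{N,k}\to C_0$. By the explicit description of the Néron $N$-gon, $P_{N,k}$ is the union of $N$ copies $Z_0,\dots,Z_{N-1}$ of $\P^1$, glued cyclically with $\infty_i$ identified to $0_{i+1}$ at a node $\eta_i$; over the smooth locus $C_0^{\mathrm{sm}}=C_0\setminus\{\text{node}\}$ the cover $\gamma$ is finite étale of degree $N$ with the $Z_i\setminus\{0_i,\infty_i\}$ as its sheets, so $\gamma$ restricts on each $Z_i$ to a birational, hence (the curves being smooth and projective) an isomorphism onto the normalisation $\widetilde{C}_0$ of $C_0$; in particular $\gamma$ is an isomorphism over $C_0^{\mathrm{sm}}$, it is a local isomorphism at each node $\eta_i$ (which maps to the node of $C_0$), and $(\gamma\circ\sigma)|_{Z_i}$ is identified with $\widetilde{\sigma}\colon\widetilde{C}_0\to\P^1$. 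Setting $\Gamma_i:=\bigl((\gamma\circ\sigma)|_{Z_i}\times\tau\bigr)^{-1}(\Delta)\subset Z_i\times E_0$, the isomorphism $Z_i\simeq\widetilde{C}_0$ (compatible with the maps to $\P^1$) then induces $\Gamma_i\simeq\overline{\Gamma}$, so each $\Gamma_i$ is nonsingular, irreducible, of genus $3$.

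Next I would prove that the map $\coprod_i\Gamma_i\to\Gamma$ induced by the normalisation $\coprod_i Z_i\to P_{N,k}$ (times $\mathrm{id}_{E_0}$) is the normalisation of $\Gamma$, simultaneously locating every singularity of $\Gamma$. Over $Z_i\setminus\{0_i,\infty_i\}$ the map $\gamma$ is an isomorphism onto $C_0^{\mathrm{sm}}$, so $\Gamma\cap\bigl((Z_i\setminus\{0_i,\infty_i\})\times E_0\bigr)$ is isomorphic to an open subscheme of the smooth curve $\Gamma_i$ and in particular is smooth; hence $\Gamma$ can be singular only above the $N$ nodes $\eta_i$. Writing $b:=\widetilde{\sigma}(\nu_1)=\widetilde{\sigma}(\nu_2)$ for the common image of the two branches at the node, $b$ is a regular value of $\widetilde{\sigma}$ (so $b\notin\{p_1,p_2\}$) and $b\notin\{q_1,\dots,q_4\}$ by hypothesis, so $\tau$ is unramified over $b$ and $(\eta_i,e)$ lies on $\Gamma$ exactly for $e\in\tau^{-1}(b)=\{e_1,e_2\}$. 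A local computation at such a point, using that $\gamma\circ\sigma$ near $\eta_i$ is the ``fold'' map $\{xy=0\}\to\mathbb{A}^1$ and $\tau$ near $e_l$ is a local isomorphism, shows that $\Gamma$ acquires an ordinary double point there whose two branches are those of $\Gamma_i$ and of $\Gamma_{i+1}$, with distinct tangent directions, and that $\coprod_i\Gamma_i\to\Gamma$ separates these branches; equivalently this can be read off from the transversality of $\overline{\Gamma}$ with $\{\nu_j\}\times E_0$ noted above. Thus $\coprod_i\Gamma_i\to\Gamma$ is precisely the normalisation of $\Gamma$, and $\Gamma$ has exactly the $2N$ nodes $(\eta_i,e_l)$ and no other singularities.

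Finally I would assemble the statement: $\Gamma$ is a nodal curve whose normalisation is $\coprod_{i\in\mathbb{Z}/N}\Gamma_i$ with each $\Gamma_i\simeq\overline{\Gamma}$ smooth of genus $3$, so the images of the $\Gamma_i$ are the $N$ irreducible components of $\Gamma$, the component with index $i$ meeting those with index $i\pm1$ at the nodes lying over $0_i$ and $\infty_i$; this cyclic pattern forces $\Gamma$ to be connected, which is exactly the assertion. The main obstacle is the local analysis at the $N$ nodes in the third step — one must check that no singularity worse than a node arises there and that no further singularities hide over the branch points of $\widetilde{\sigma}$ or $\tau$ — and this rests entirely on the two facts that $\gamma$ is a local isomorphism both over $C_0^{\mathrm{sm}}$ and at the nodes of $P_{N,k}$, and that $b$ avoids $\{p_1,p_2\}\cup\{q_1,\dots,q_4\}$.
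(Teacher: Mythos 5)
Your proof is correct and follows the same line as the paper: reduce to $\overline{\Gamma}$ (shown nonsingular, irreducible, of genus $3$) and use the transversality of $\overline{\Gamma}$ with $\{\nu_1\}\times E_0$ and $\{\nu_2\}\times E_0$ at the points lying over the node of $C_0$. You make explicit the cyclic identification of each component $Z_i$ of $P_{N,k}$ with $\widetilde{C}_0$ via $\gamma$, the local analysis at the nodes $(\eta_i,e_l)$, and the observation that $b\notin\{p_1,p_2\}$ (because $\nu_1\neq\nu_2$), all of which the paper leaves implicit in the passage ``Thus in summary we get.''
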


\section{The connection to torsion points}\label{sConnectionTorsion}

Suppose now that we are given two elliptic curves $E_1, E_2$ over $K$ with standard projections $\sigma_i$ as in Theorem \ref{tFinitenessGeneralised}. Then, with the hypotheses and notation of Section \ref{sExtension} (adding indices $1$ and $2$ to $\EE$ etc.), the minimal proper regular model $\EE_1 \to \mathrm{Spec}\, R$ of $E_1$ has central fibre a N\'{e}ron $p$-gon, whereas $\EE_2 \to \mathrm{Spec}\, R$ has central fibre an ordinary reduction elliptic curve. Proposition \ref{pExistenceFactorisation} and its proof then produce admissible factorisations
\[
\xymatrix{
\EE_1 \ar[r]^{\alpha_{\EE_1}} & \FF_1 \ar[r]^{\beta_{\FF_1}} & \UU_1, \\
\EE_2 \ar[r]^{\alpha_{\EE_2}} & \FF_2 \ar[r]^{\beta_{\FF_2}} & \UU_2 
}
\]
and $\UU_1 =\WW_1$ is the minimal Weierstrass model and $\UU_2 =\EE_2$. 

We put
\begin{enumerate}
\item
$\sA = \EE_1 \times_{\mathrm{Spec}\, R} \EE_2$, $\BB = \FF_1 \times_{\mathrm{Spec}\, R} \FF_2$, $\CC = \UU_1 \times_{\mathrm{Spec}\, R} \UU_2$ with structural morphisms to $\mathrm{Spec}\, R$ denoted by $\pi_{\sA}, \pi_{\BB}, \pi_{\CC}$. (Note that this is a slight departure from the notation used in Section \ref{sGoodMult} inasmuch there the letter $\sA$ was used for what is denoted by $\CC$ here and in the sequel. However, the notation we now adopt will make the following arguments, somewhat heavy on notation anyway, more transparent and readable we hope). 

\item
$\alpha = \alpha_{\EE_1} \times_R \alpha_{\EE_2}$, $\beta = \beta_{\FF_1} \times_R \beta_{\FF_2}$. 
\end{enumerate}
So we get a sequence of morphisms of $R$-schemes
\[
\xymatrix{
\sA \ar[r]^{\alpha} & \BB \ar[r]^{\beta} & \CC
}
\]
and $\beta\circ \alpha$ restricted to the generic fibre is the multiplication by $p$ map on the abelian surface $E_1 \times E_2$, $\beta$ is \'{e}tale, and $\alpha$ restricts to the relative Frobenius on the central fibre of $\pi_{\sA}\colon \sA \to \mathrm{Spec}\, R$.

By the assumptions made in Theorem \ref{tFinitenessGeneralised}, we are also given standard double covers
\[
\sigma_{E_i}\colon E_i \to \mathbb{P}^1_K, \: i=1, 2, 
\]
extending to double covers
\[
\sigma_{\UU_i}\colon \UU_i \to \mathbb{P}^1_R, \: i=1, 2
\]
of the minimal Weierstrass models. 

Given some model over $\mathrm{Spec}\, R$, we denote the largest open subscheme of it that is smooth over $\mathrm{Spec}\, R$ by an upper $\circ$, such as in $\sA^{\circ}$ for example.

\medskip

With $\Delta_R \subset \mathbb{P}^1_R \times \mathbb{P}^1_R$ the diagonal, we introduce the further notation
\[
\Gamma_{\CC, R} = \left( \sigma_{\UU_1}\times_R\sigma_{\UU_2} \right)^{-1} \left( \Delta_R \right) \subset \CC , \quad \Gamma_{\BB, R} := \beta^{-1}\left( \Gamma_{\CC, R} \right) \subset \BB ,
\]
and denote by $\Gamma_{\CC, k} \subset \CC_k$, $\Gamma_{\BB, k} \subset \BB_k$ the special fibres of these $R$-schemes. Furthermore, we write $\overline{\Gamma}_{\sA, k}$ for the \emph{reduced} preimage of $\Gamma_{\BB, k}$ under $\alpha_k \colon \sA_k \to \BB_k$. 

\medskip

Note that $\Gamma_{\CC, R}$ was denoted by $\XX$ previously in Section \ref{sGoodMult}. 

\medskip

We also denote $R_j :=R/ p^{j+1} R$, and by an upper index in round brackets the pull back of the various $R$-schemes to $\mathrm{Spec}\, R_j$. So, for example, $(\sA^{\circ})^{(1)} (R_1)$ are $R_1$-valued points of $(\sA^{\circ})^{(1)}$, the pull back of $\sA^{\circ}$ to $\mathrm{Spec}\, (R/p^2 R)$. 

\medskip

Let $\Sigma$ be the set of points in $(\sA^{\circ})^{(1)} (R_1)$ that lift points of $\overline{\Gamma}_{\sA, k}^{\circ}$, and set $\Lambda = \alpha^{(1)} \left( \Sigma \right) \subset (\BB^{\circ})^{(1)} (R_1)$. Then $p\Sigma = \beta^{(1)}(\Lambda)$ is the subset of points in $p \left( (\sA^{\circ})^{(1)} (R_1) \right) $ that lift points of $\Gamma_{\CC, k}^{\circ}$. Moreover,

\begin{gather*}
\beta^{(1)} \left( \Lambda \cap (\Gamma_{\BB, R}^{\circ})^{(1)} (R_1)  \right) = \beta^{(1)} \left( \Lambda \cap (\beta^{(1)})^{-1}\left( (\Gamma_{\CC, R}^{\circ})^{(1)} (R_1) \right)  \right)\\
= \beta^{(1)} \left( \Lambda \right) \cap \left( (\Gamma_{\CC, R}^{\circ})^{(1)} (R_1) \right) = p\Sigma \cap \left( (\Gamma_{\CC, R}^{\circ})^{(1)} (R_1) \right). 
\end{gather*} 

Thus we obtain

\begin{lemma}\label{lFiniteCheck}
If the image of $\Lambda \cap (\Gamma_{\BB, R}^{\circ})^{(1)} (R_1)$ in $\Gamma_{\BB, k}^{\circ}(k)$ is finite, then the image of $p\Sigma \cap \left( (\Gamma_{\CC, R}^{\circ})^{(1)} (R_1) \right)$ in $\Gamma_{\CC, k}^{\circ}(k)$ is finite. 
\end{lemma}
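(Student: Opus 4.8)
The plan is to deduce the Lemma directly from the set-theoretic identity
\[
\beta^{(1)} \left( \Lambda \cap (\Gamma_{\BB, R}^{\circ})^{(1)} (R_1) \right) = p\Sigma \cap \left( (\Gamma_{\CC, R}^{\circ})^{(1)} (R_1) \right)
\]
established just above the statement, together with the triviality that the image of a finite set under any map is finite. First I would record the compatibility of $\beta$ with the various decorations in play: since $\beta$ is \'{e}tale, smoothness over $\mathrm{Spec}\, R$ is detected on the source, so the $R$-smooth locus $\BB^{\circ}$ of $\BB$ equals $\beta^{-1}(\CC^{\circ})$; combined with $\Gamma_{\BB, R} = \beta^{-1}(\Gamma_{\CC, R})$ this gives $\Gamma_{\BB, R}^{\circ} = \beta^{-1}(\Gamma_{\CC, R}^{\circ})$, so in particular $\beta$ carries $\Gamma_{\BB, R}^{\circ}$ into $\Gamma_{\CC, R}^{\circ}$, and the same holds after base change to $\mathrm{Spec}\, R_1$ and to $\mathrm{Spec}\, k$.

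Next I would invoke the functoriality of the specialisation (reduction mod $p$) map with respect to the $R$-morphism $\beta$. Writing $\mathrm{sp}_{\BB}$ and $\mathrm{sp}_{\CC}$ for the maps $(\Gamma_{\BB, R}^{\circ})^{(1)}(R_1) \to \Gamma_{\BB, k}^{\circ}(k)$ and $(\Gamma_{\CC, R}^{\circ})^{(1)}(R_1) \to \Gamma_{\CC, k}^{\circ}(k)$ induced by the closed immersion $\mathrm{Spec}\, k \hookrightarrow \mathrm{Spec}\, R_1$, one has $\mathrm{sp}_{\CC} \circ \beta^{(1)} = \beta_k \circ \mathrm{sp}_{\BB}$, because taking $R_1$-points and $k$-points of a fixed $R$-morphism commutes with restriction of scalars along that closed immersion.

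Combining the two, the image in $\Gamma_{\CC, k}^{\circ}(k)$ of $p\Sigma \cap (\Gamma_{\CC, R}^{\circ})^{(1)}(R_1)$ is
\[
\mathrm{sp}_{\CC}\left( \beta^{(1)} \left( \Lambda \cap (\Gamma_{\BB, R}^{\circ})^{(1)} (R_1) \right) \right) = \beta_k\left( \mathrm{sp}_{\BB}\left( \Lambda \cap (\Gamma_{\BB, R}^{\circ})^{(1)} (R_1) \right) \right),
\]
and the set inside $\beta_k(\cdot)$ is, by hypothesis, precisely the image of $\Lambda \cap (\Gamma_{\BB, R}^{\circ})^{(1)} (R_1)$ in $\Gamma_{\BB, k}^{\circ}(k)$, which is finite. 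Its image under $\beta_k$ is therefore finite, which is exactly the conclusion.

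I do not expect a genuine obstacle here: the argument is pure bookkeeping. The one point I would make sure to spell out --- and the only place where an error could creep in --- is the compatibility of $\beta$ with all three superscripts at once (the $\circ$ for the $R$-smooth loci, the $(1)$ for reduction mod $p^2$, and the formation of the $\Gamma$'s as preimages of the diagonal), so that the commutative square relating $\mathrm{sp}_{\BB}$, $\mathrm{sp}_{\CC}$, $\beta^{(1)}$ and $\beta_k$ is simply the restriction of the tautological square for $\beta \colon \BB \to \CC$ to the relevant locally closed subschemes. Once that is in hand, ``image of a finite set is finite'' finishes the proof; one could alternatively note that $\beta_k$ is finite \'{e}tale, hence even has finite fibres, but this is not needed.
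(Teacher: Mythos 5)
Your proof is correct and takes essentially the same approach as the paper: the paper establishes the set-theoretic identity $\beta^{(1)}\bigl( \Lambda \cap (\Gamma_{\BB, R}^{\circ})^{(1)} (R_1) \bigr) = p\Sigma \cap (\Gamma_{\CC, R}^{\circ})^{(1)} (R_1)$ in the displayed computation immediately preceding the Lemma and then simply writes ``Thus we obtain,'' leaving the remaining bookkeeping implicit. You have spelled out exactly that bookkeeping --- the compatibility of $\beta$ with the smooth loci (via \'{e}taleness) and with the specialisation maps --- so that the conclusion follows from ``the image of a finite set is finite,'' which is precisely what the paper's ``Thus we obtain'' is gesturing at.
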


In Theorem \ref{tFinitenessGeneralised} we assumed that the elliptic curve $E_1$ over $K$ has a Tate uniformisation $K^*/q^{\mathbb{Z}}$ with a parameter $q\in K^*$ that is a $p$-th power of a uniformiser $\varpi$ in $K$: $q=\varpi^p$. We will now use that assumption to prove

\begin{proposition}\label{pRotationalSymmetry}
If the image of $\Lambda \cap (\Gamma_{\BB, R}^{\circ})^{(1)} (R_1)$ in $\Gamma_{\BB, k}^{\circ}(k)$ is infinite, then this image is infinite in every irreducible component of $\Gamma_{\BB, k}$. 
\end{proposition}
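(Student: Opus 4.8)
The plan is to exploit the extra hypothesis that the Tate parameter $q$ of $E_1$ is a $p$-th power of a uniformiser $\varpi\in K$, i.e. $q=\varpi^p$, in order to produce an automorphism of the model $\EE_1$ over $\mathrm{Spec}\,R$ which permutes the $p$ components of the N\'eron $p$-gon $(\EE_1)_k$ cyclically and which is compatible (up to the relevant translations) with the multiplication-by-$p$ map. Concretely: the Tate curve $K^*/q^{\mathbb{Z}}$ carries the translation action by the point $\varpi \bmod q^{\mathbb{Z}}$, whose order in $E_1(K)/E_1(K)^{\circ}$ is exactly $p$ because $q=\varpi^p$; since a uniformiser is a unit away from the central fibre, this translation extends to an $R$-automorphism $\rho_1$ of the minimal regular model $\EE_1$ whose reduction $\rho_{1,k}$ rotates the N\'eron $p$-gon by one step (it sends the component labelled $i$ to the component labelled $i+1$ in the description recalled in Section~\ref{sExtension}). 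I would first establish this carefully, using the explicit description of the component group of a Tate curve and the fact that $\rho_1$ normalises $\EE_1^{\circ}$, so that it descends through the admissible factorisation $\EE_1\xrightarrow{\alpha_{\EE_1}}\FF_1\xrightarrow{\beta_{\FF_1}}\UU_1=\WW_1$ to automorphisms $\bar\rho_1$ of $\FF_1$ and of $\WW_1$, with $\bar\rho_1$ again rotating the central fibres cyclically.

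Next I would transport this symmetry to the product situation. Set $\rho=\rho_1\times_R \mathrm{id}_{\EE_2}$ acting on $\sA=\EE_1\times_R\EE_2$, and likewise its descent $\bar\rho$ acting on $\BB=\FF_1\times_R\FF_2$; both reduce to automorphisms of the central fibres that cyclically permute the $p$ components (recall from Proposition~\ref{pGeometryCurve} that $\Gamma_{\BB,k}$ has $p$ components, one over each component of the N\'eron $p$-gon, each a smooth genus-$3$ curve, glued at nodes). The key point is that $\rho$ and $\bar\rho$ preserve the relevant diagonal-preimage curves: since $\rho_1$ is a translation and the involution $\iota_1$ on $E_1$ sends a point to its inverse, the composite $\sigma_{\UU_1}\circ\bar\rho_1$ differs from $\sigma_{\UU_1}$ by the automorphism of $\mathbb{P}^1$ corresponding to that translation descended mod $\iota_1$; one checks (using that $\varpi$ has order $p$ in the component group, hence the descended automorphism of $\mathbb{P}^1_R$ is the identity on the relevant open part, or can be absorbed) that $\bar\rho$ carries $\Gamma_{\CC,R}^{\circ}$, hence $\Gamma_{\BB,R}^{\circ}=\beta^{-1}(\Gamma_{\CC,R}^{\circ})$, into itself. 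Similarly $\rho$ preserves $\overline\Gamma_{\sA,k}^{\circ}$ and commutes with $\alpha^{(1)}$, so it preserves the set $\Sigma$ of Section~\ref{sConnectionTorsion} and hence $\Lambda=\alpha^{(1)}(\Sigma)$, and it preserves $\Lambda\cap(\Gamma_{\BB,R}^{\circ})^{(1)}(R_1)$ as a set. Therefore the image of this set in $\Gamma_{\BB,k}^{\circ}(k)$ is stable under the cyclic rotation $\bar\rho_k$ of the $p$ components; if it meets one component in infinitely many points, applying the rotation shows it meets every component in infinitely many points. This is exactly the assertion.

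The main obstacle I anticipate is the bookkeeping around how the translation $\rho_1$ interacts with the standard projection $\sigma_{\UU_1}$: a translation on $E_1$ does \emph{not} in general commute with the inversion involution $\iota_1$, so it does not a priori descend to $\P^1$. The resolution must use that we are translating by a $2$-torsion-like element of the component group in the quotient $E_1/\iota_1$ — more precisely, one works with $\overline{\mathrm{mult}}_{p}$ and the induced maps on $\P^1_R\times_R\P^1_R$ as in Definition~\ref{dMultiplicationByP}, where the involutions have already been divided out, so that the rotation of the N\'eron polygon descends to a genuine rotation of the chain of $\P^1$'s underlying $(\overline{\mathrm{mult}}_p)_0^{-1}$ of the diagonal. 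I would therefore phrase the whole argument on the $\P^1\times\P^1$ side (the "$\overline{\mathbb{S}}_1$" picture of the proof of Theorem~\ref{tMainMixedReductionTypes}), where the automorphism is manifestly well-defined, and only at the end translate back through $\beta$ to the statement about $\Gamma_{\BB,k}$. The remaining steps — that $\rho_1$ extends over $R$, that it reduces to the polygon rotation, and that it commutes with $\alpha^{(1)}$ — are then routine given the explicit Tate-curve and N\'eron-polygon descriptions already recalled in the paper.
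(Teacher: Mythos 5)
Your proposal is built on the same idea as the paper's proof: use the hypothesis $q=\varpi^p$ to produce the $p$-torsion point $t\in E_1(\overline K)$ corresponding to $\varpi$ under the Tate uniformisation, so that $x_t=(t,0)\in\sA^{\circ}(R)$ reduces to a generator of the component group $\Z/p$ of $\sA_k^{\circ}$, and then translate to move between components. You also correctly identify that the translation should preserve $\Sigma$, $\Lambda$, and $\Gamma_{\BB,R}^{\circ}$. Where your argument goes off course is the ``main obstacle'' paragraph: the $\iota_1$-compatibility problem you worry about never actually arises, and the detour through $\P^1_R\times_R\P^1_R$ and $\overline{\mathrm{mult}}_p$ is unnecessary. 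The clean point, which the paper uses, is that one translates $y\in\Lambda\cap(\Gamma_{\BB,R}^{\circ})^{(1)}(R_1)$ by $\alpha^{(1)}(x_t)$ on $\BB$ (not on $\EE_1$, $\UU_1$, or $\P^1$). Since $\beta\circ\alpha=[p]$ and $x_t$ is $p$-torsion, $\alpha^{(1)}(x_t)$ lies in the kernel of $\beta^{(1)}$; hence translation by it on $\BB^{(1)}$ induces the identity on $\CC^{(1)}$ and therefore preserves $\Gamma_{\BB,R}^{\circ}=\beta^{-1}(\Gamma_{\CC,R}^{\circ})$ tautologically, with no need to discuss the inversion involution or the standard projection at all. (The same computation shows that $s\mapsto s+x_t$ preserves $\Sigma$, hence $\Lambda$.) Your remark that $\varpi\in K_p$ forces the descended automorphism of $\UU_1=\EE_1/K_p$ to be the identity is essentially this observation stated one floor down, but the justification you offer (``$\varpi$ has order $p$ in the component group'') is weaker than what is needed: it is the fact that $\varpi$ is itself $p$-torsion, so $[p](x_t)=0$, that makes $\alpha^{(1)}(x_t)\in\ker\beta^{(1)}$. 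Once this is said cleanly, the conclusion is immediate: since $\alpha_k$ (relative Frobenius) is the identity on component groups, repeatedly adding $\alpha^{(1)}(x_t)$ moves the specialisation of $y$ cyclically through all $p$ components of $\Gamma_{\BB,k}$, while staying inside $\Lambda\cap(\Gamma_{\BB,R}^{\circ})^{(1)}(R_1)$.
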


\begin{proof}
This follows from the rotational symmetry of the situation, more precisely: choose a $p$-torsion point $t$ of $E_1(\overline{K})$ such that $(t, \mathrm{id}_{E_2})$ defines an $R$-valued point $x_t$ of $\sA^{\circ}$ intersecting the central fibre $\sA^{\circ}_k$ in a point not lying on the identity component of $\sA^{\circ}_k$. Such $t$ exist, for example, the torsion point $t$ corresponding to $\varpi \in K^*$ under the Tate uniformisation. 
Also $x_t$ induces an $R_1$-valued point of $(\sA^{\circ})^{(1)}$ which we denote by the same symbol.

Suppose now given an $R_1$-valued point $y$ in $\Lambda \cap (\Gamma_{\BB, R}^{\circ})^{(1)} (R_1)$ that specialises to a point on a certain component of $\Gamma_{\BB, k}^{\circ}(k)$. Adding the $R_1$-valued point $\alpha^{(1)}(x_t)$ to $y$ multiple times for the structure of $\BB^{\circ}$ as an $R$-group scheme, we obtain from $y$ points in $\Lambda \cap (\Gamma_{\BB, R}^{\circ})^{(1)} (R_1)$ specialising into points on all the other components. 
\end{proof}

\section{Log deformation theory and Frobenius liftings}\label{sLogFrobenius} 

We start by noticing that $\pi_{\sA}\colon \sA \to \mathrm{Spec}\, R$ becomes log smooth if we endow $\sA$ with the divisorial log structure determined by the central fibre $\sA_k \subset \sA$ and $\mathrm{Spec}\, R$ with the divisorial log structure given by its closed point \cite[Thm. 4.1]{Kato96} or \cite[IV., Thm. 3.1.18]{Ogus18}. We denote the resulting morphism of log schemes
\[
\pi_{\sA}^{\dagger} \colon \sA^{\dagger} \to (\mathrm{Spec}\, R)^{\dagger}, 
\]
and will adhere to the same practice of denoting log schemes by an added dagger in other instances below. 

In fact, $\pi_{\BB}\colon \BB \to \mathrm{Spec}\, R$ and $\pi_{\CC}\colon \CC \to \mathrm{Spec}\, R$ also become log smooth over $(\mathrm{Spec}\, R)^{\dagger}$ if we endow the total spaces with the divisorial log structures determined by the central fibres, and $\alpha$, $\beta$ naturally determine morphisms of log schemes, which we denote $\alpha^{\dagger}$, $\beta^{\dagger}$; indeed, it suffices to check this \'{e}tale locally around singular points of the central fibres where these fibrations are given by
\[
(xy - \pi^p =0 ) \subset \mathbb{A}_R^2
\]
(where we denote a uniformiser of $R$ by $\pi$). By \cite[Ex. 3.27, 3.28]{Gross11}, the log morphism down to $(\mathrm{Spec}\, R)^{\dagger}$ can be described, using charts, by the diagram
\[
\xymatrix{
S_p = \mathbb{N}^2 \oplus_{\mathbb{N}} \mathbb{N} \ar[r] & R [x,y]/(xy - \pi^p) \\
\mathbb{N}\ar[u] \ar[r] & R \ar[u] 
}
\]
where $S_p$ is the submonoid of $\mathbb{N}^2 \oplus \mathbb{N}$ generated by $$\alpha_1 = ((1,0),0), \alpha_2 = ((0,1),0), \varrho = ((0,0),1)$$ with one relation $\alpha_1 + \alpha_2 = p\varrho$, and denoting by $1$ the standard generator of $\mathbb{N}$ (the copy in the left hand lower corner in the diagram), the maps are given as follows:
\begin{enumerate}
\item
$\mathbb{N} \to S_p$ maps $1\mapsto \varrho$;
\item
$S_p \to R[x,y]/(xy-\pi^p)$ sends $\alpha_1\mapsto x, \alpha_2\mapsto y, \varrho \mapsto \pi$;
\item
$\mathbb{N}\mapsto R$ satisfies $1\mapsto \pi, 0 \mapsto 1$;
\item
$R \to R [x,y]/(xy - \pi^p)$ is the natural inclusion. 
\end{enumerate}
Thus the toroidal characterisation of log smoothness \cite[Thm. 4.1]{Kato96} applies. 

\medskip

Restricting the log structure from $\pi_{\sA}^{\dagger}$ to the subscheme $\overline{\Gamma}_{\sA, k}$, we obtain a log scheme $\overline{\Gamma}_{\sA, k}^{\dagger}$ log smooth over the standard log point $(\mathrm{Spec}\, k)^{\dagger}$, which one checks \'{e}tale locally as before.

Using \cite[Prop. 3.40, 3.28]{Gross11}, we can lift $\overline{\Gamma}_{\sA, k}^{\dagger}$ to a log smooth curve $\ZZ^{\dagger} \to (\mathrm{Spec}\, R)^{\dagger}$. Note that $\Gamma_{\BB, R}\to \mathrm{Spec}\, R$ also becomes log-smooth if we endow total space and base with the divisorial log structures determined by the central fibre and marked point, yielding $\Gamma_{\BB, R}^{\dagger} \to (\mathrm{Spec}\, R)^{\dagger}$. 

Our goal now is to show that under the assumptions of Proposition \ref{pRotationalSymmetry}, the morphism $\alpha$ induces a first order infinitesimal lifting of the relative Frobenius 
\[
\xymatrix{
(\ZZ^{(1)})^{\dagger} \ar[rr]^{\Phi}\ar[rd]  &  & (\Gamma_{\BB, R}^{(1)})^{\dagger} \ar[ld] \\
 & (\mathrm{Spec}\, R_1)^{\dagger} & 
}
\]
which is a morphism of log schemes that are log smooth over $(\mathrm{Spec}\, R_1)^{\dagger}$. This will yield a contradiction as in \cite[Lemma I.5.4]{Ray83-2}, using a log version of the Cartier operator and log differential forms. Then by Proposition \ref{pRotationalSymmetry} and Lemma \ref{lFiniteCheck} we conclude that the conclusion of Theorem \ref{tFinitenessGeneralised} holds. 

\medskip

To start we have

\begin{lemma}\label{lLocalLifting}
There exists a canonical morphism of log schemes
\[
\xymatrix{
(\ZZ^{(1)})^{\dagger} \ar[rr]^{\varphi}\ar[rd]  &  & (\BB^{(1)})^{\dagger} \ar[ld] \\
 & (\mathrm{Spec}\, R_1)^{\dagger} & 
}
\]
that lifts $\alpha_k^{\dagger}$ on $\overline{\Gamma}_{\sA, k}^{\dagger}$ and satisfies $\varphi ((\ZZ^{\circ})^{(1)} (R_1)) =\Lambda$. 
\end{lemma}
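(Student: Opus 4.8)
The plan is to build $\varphi$ by a log-infinitesimal-lifting argument which is the scheme-theoretic incarnation of Lemma~\ref{lFormulaWitt}~b): the vanishing of the differential of the relative Frobenius $\alpha_k$ will make all the local choices immaterial. Recall that $(\sA^{(1)})^{\dagger}\to(\mathrm{Spec}\,R_1)^{\dagger}$ is log smooth, that $\overline{\Gamma}_{\sA,k}^{\dagger}$ is log smooth over the standard log point, and that $(\ZZ^{(1)})^{\dagger}$ is a log smooth lift of $\overline{\Gamma}_{\sA,k}^{\dagger}$ over $(\mathrm{Spec}\,R_1)^{\dagger}$, so that $\overline{\Gamma}_{\sA,k}^{\dagger}\hookrightarrow(\ZZ^{(1)})^{\dagger}$ is a strict closed immersion cut out by the square-zero ideal $p\OO\cong\OO_{\overline{\Gamma}_{\sA,k}}$. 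First I would apply the infinitesimal lifting property of the log smooth morphism $(\sA^{(1)})^{\dagger}\to(\mathrm{Spec}\,R_1)^{\dagger}$ to the strict closed immersion $\overline{\Gamma}_{\sA,k}^{\dagger}\hookrightarrow\sA_k^{\dagger}\hookrightarrow(\sA^{(1)})^{\dagger}$: \'etale-locally on $\overline{\Gamma}_{\sA,k}$, say over a chart $U$, there is an $R_1$-morphism of log schemes $\beta_U\colon(\ZZ^{(1)})^{\dagger}|_U\to(\sA^{(1)})^{\dagger}$ extending it. Here the explicit toric charts of Section~\ref{sLogFrobenius} are used at the finitely many points of $\overline{\Gamma}_{\sA,k}$ lying over the nodes of the central fibre, in order to know that all the log structures in sight are the divisorial ones and that the relevant immersions are strict. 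One then sets $\varphi_U:=\alpha^{(1)}\circ\beta_U\colon(\ZZ^{(1)})^{\dagger}|_U\to(\BB^{(1)})^{\dagger}$, which by construction lifts $\alpha_k^{\dagger}$ restricted to $\overline{\Gamma}_{\sA,k}^{\dagger}$.

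The key step is that the $\varphi_U$ glue, and this is where $d\alpha_k=0$ enters. On an overlap $U\cap U'$ the two lifts $\beta_U,\beta_{U'}$ of the same log morphism into the log smooth $(\sA^{(1)})^{\dagger}$ differ by a section $D$ of $\mathrm{Hom}_{\OO_{\overline{\Gamma}_{\sA,k}}}\!\bigl(\Omega^{1}_{\sA_k^{\dagger}/(\mathrm{Spec}\,k)^{\dagger}}|_{\overline{\Gamma}_{\sA,k}},\,p\OO\bigr)$. Post-composing with $\alpha^{(1)}$, the difference of $\varphi_U$ and $\varphi_{U'}$ is $D$ precomposed with the log differential $d\alpha_k\colon\alpha_k^{*}\Omega^{1}_{\BB_k^{\dagger}/(\mathrm{Spec}\,k)^{\dagger}}\to\Omega^{1}_{\sA_k^{\dagger}/(\mathrm{Spec}\,k)^{\dagger}}$; but $\alpha_k=\alpha_{\EE_1,k}\times\alpha_{\EE_2,k}$ is a product of relative Frobenius morphisms, so $d\alpha_k=0$, whence $\varphi_U=\varphi_{U'}$ on $U\cap U'$. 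Thus the $\varphi_U$ patch to a single $R_1$-morphism $\varphi\colon(\ZZ^{(1)})^{\dagger}\to(\BB^{(1)})^{\dagger}$ lifting $\alpha_k^{\dagger}|_{\overline{\Gamma}_{\sA,k}^{\dagger}}$, and the very same computation shows $\varphi$ does not depend on any of the local choices of $\beta_U$, which is the asserted canonicity. Note that I do not claim the $\beta_U$ themselves glue to a global $\beta$ --- the obstruction to that lives in $H^{1}(\overline{\Gamma}_{\sA,k},\,\mathcal{T}_{\sA_k^{\dagger}/(\mathrm{Spec}\,k)^{\dagger}}|_{\overline{\Gamma}_{\sA,k}}\otimes p\OO)$ and need not vanish --- only that the composite $\alpha^{(1)}\circ\beta_U$ does.

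Finally I would check $\varphi((\ZZ^{\circ})^{(1)}(R_1))=\Lambda$. For $\subseteq$: a point $z\in(\ZZ^{\circ})^{(1)}(R_1)$ reduces to $\bar z\in\overline{\Gamma}_{\sA,k}^{\circ}(k)$; choosing a chart $U$ around $\bar z$ gives $\varphi(z)=\alpha^{(1)}(\beta_U(z))$ with $\beta_U(z)\in(\sA^{\circ})^{(1)}(R_1)$ reducing to $\bar z$, hence $\beta_U(z)\in\Sigma$ and $\varphi(z)\in\alpha^{(1)}(\Sigma)=\Lambda$. For $\supseteq$: every $\lambda\in\Lambda$ is $\alpha^{(1)}(w)$ with $w\in\Sigma$ reducing to some $\bar w\in\overline{\Gamma}_{\sA,k}^{\circ}(k)$; since $(\ZZ^{\circ})^{(1)}\to\mathrm{Spec}\,R_1$ is smooth, $\bar w$ lifts to some $z\in(\ZZ^{\circ})^{(1)}(R_1)$, and then $\varphi(z)=\alpha^{(1)}(\beta_U(z))$ where $\beta_U(z)$ and $w$ both lie in $(\sA^{\circ})^{(1)}(R_1)$ and reduce to $\bar w$, so by Lemma~\ref{lFormulaWitt}~b) applied to $\alpha^{(1)}$ (again using $d\alpha_k=0$) they have the same image, i.e.\ $\varphi(z)=\lambda$. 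I expect the main obstacle to be the careful bookkeeping of the log structures at the nodal points of $\sA_k$ --- verifying strictness of the immersions, the divisorial nature of the log structures, and that $\alpha_k$ really is the relative Frobenius there (so that $d\alpha_k=0$ holds also in the log sense) --- which is exactly what the toric chart computations of Section~\ref{sLogFrobenius} are designed to supply; the gluing computation itself, once set up, is routine log deformation theory.
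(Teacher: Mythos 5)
Your proof is correct and follows essentially the same route as the paper: obtain étale-local lifts of $\overline{\Gamma}_{\sA,k}^{\dagger}\hookrightarrow\sA_k^{\dagger}$ into $(\sA^{(1)})^{\dagger}$ via log smoothness, compose with $\alpha^{(1)}$, and use $d\alpha_k=0$ (so that the ambiguity by a derivation dies) to glue and to see that the result is canonical. You are merely more explicit than the paper where it says the final set equality is ``clear by construction'' and where it compresses the gluing computation into a sentence.
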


\begin{proof}
We wish to mimic \cite[Lemma I.5.2]{Ray83-2} in the present log setting. Since $(\sA^{(1)})^{\dagger} \to (\mathrm{Spec}\, R_1)^{\dagger}$ is log smooth, we can lift the inclusion of $\overline{\Gamma}_{\sA, k}^{\dagger}$ into the central fibre \'{e}tale locally, using the categorical characterisation, or rather definition, of log smoothness \cite[Definition 3.7]{Kato96}. Two different such lifts differ by a derivation \cite[Section. 1.1]{Ser06}, but since the differential of $\alpha_k \colon \sA_k \to \BB_k$ is zero, we get a well-defined map to $\BB^{(1)}$ if we compose with $\alpha^{(1)}$.  Since morphisms can be defined \'{e}tale-locally on the source, these local lifts glue to a morphism $\varphi\colon (\ZZ^{(1)})^{\dagger}  \to (\BB^{(1)})^{\dagger}$. The property $\varphi ((\ZZ^{\circ})^{(1)} (R_1)) =\Lambda$ is clear by construction. 
\end{proof}

\begin{lemma}\label{lFrobeniusLifting}
Suppose that the the assumptions of Proposition \ref{pRotationalSymmetry} are satisfied, in particular, the image of $\Lambda \cap (\Gamma_{\BB, R}^{\circ})^{(1)} (R_1)$ in $\Gamma_{\BB, k}^{\circ}(k)$ is infinite. Then the morphism $\alpha$ induces a first order infinitesimal lifting of the relative Frobenius 
\[
\xymatrix{
(\ZZ^{(1)})^{\dagger} \ar[rr]^{\Phi}\ar[rd]  &  & (\Gamma_{\BB, R}^{(1)})^{\dagger} \ar[ld] \\
 & (\mathrm{Spec}\, R_1)^{\dagger} & 
}
\]
which is a morphism of log schemes that are log smooth over $(\mathrm{Spec}\, R_1)^{\dagger}$. 
\end{lemma}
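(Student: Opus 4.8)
The strategy is to show that the log morphism $\varphi\colon (\ZZ^{(1)})^\dagger \to (\BB^{(1)})^\dagger$ produced in Lemma \ref{lLocalLifting} in fact factors through $(\Gamma_{\BB, R}^{(1)})^\dagger$; the resulting morphism $\Phi$ is then automatically a first order lift of the relative Frobenius, since its reduction modulo $p$ is $\alpha_k^\dagger$ restricted to $\overline{\Gamma}_{\sA, k}^\dagger$, which we will identify with a relative (log) Frobenius. The infiniteness hypothesis is precisely what forces this factorisation, just as in Raynaud's argument.

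First I would set $W := \varphi^{-1}(\Gamma_{\BB, R}^{(1)}) \subseteq \ZZ^{(1)}$, a closed subscheme, and prove that $W = \ZZ^{(1)}$. By Lemma \ref{lLocalLifting} we have $\varphi\bigl((\ZZ^{\circ})^{(1)}(R_1)\bigr) = \Lambda$, so $W$ contains every $R_1$-point $z$ of $(\ZZ^{\circ})^{(1)}$ with $\varphi(z) \in \Lambda \cap (\Gamma_{\BB, R}^{\circ})^{(1)}(R_1)$. The reduction of $\varphi$ modulo $p$ is $\alpha_k|_{\overline{\Gamma}_{\sA, k}}\colon \overline{\Gamma}_{\sA, k} \to \Gamma_{\BB, k}$; since $\alpha_k$ is a universal homeomorphism (it is a product of relative Frobenius morphisms, cf.\ the end of the argument), it induces a bijection on irreducible components, so combining the standing hypothesis with Proposition \ref{pRotationalSymmetry} shows that the specialisations of these points $z$ meet \emph{every} irreducible component of $\ZZ_k = \overline{\Gamma}_{\sA, k}$ in infinitely many points. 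As $\ZZ_k$ is reduced (it is the \emph{reduced} preimage $\overline{\Gamma}_{\sA, k}$, cf.\ Proposition \ref{pGeometryCurve}), the mod-$p$ reduction of $W$ equals $\ZZ_k$. To pass from $k$ to $R_1 = R/p^2$: flatness of $\ZZ^{(1)}$ over $R_1$ identifies $p\,\OO_{\ZZ^{(1)}}$ with $\OO_{\ZZ_k}$, so the ideal sheaf of $W$ is contained in $p\,\OO_{\ZZ^{(1)}}$ and corresponds to an ideal sheaf $\mathcal{J} \subseteq \OO_{\ZZ_k}$; an $R_1$-point of $(\ZZ^{\circ})^{(1)}$ lies in $W$ if and only if its specialisation lies in $V(\mathcal{J})$, since $p$ is a non-zero-divisor of $R_1$. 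The above specialisations being Zariski-dense in each component, $V(\mathcal{J}) = \ZZ_k$, hence $\mathcal{J} = 0$ and $W = \ZZ^{(1)}$. I expect this mod-$p^2$ density step --- that a closed subscheme of the non-reduced scheme $\ZZ^{(1)}$ containing enough $R_1$-points on each component must be all of $\ZZ^{(1)}$ --- to be the main obstacle; it is a local version of the scheme-theoretic density arguments used in Proposition \ref{pComputInt}.

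It then remains to upgrade the resulting scheme morphism $\Phi\colon \ZZ^{(1)} \to \Gamma_{\BB, R}^{(1)}$ to a morphism of log schemes and to identify its reduction. The closed immersion $\Gamma_{\BB, R}^\dagger \hookrightarrow \BB^\dagger$ is strict: \'{e}tale-locally around a node, $\Gamma_{\BB, R}$ is cut out inside $\BB$ by coordinates transverse to the singular locus, so the restricted log structure is the divisorial one on $\Gamma_{\BB, R}$ appearing in Section \ref{sLogFrobenius} (compare the local models there and in Remark \ref{rQuotientSings}); the base change of a strict immersion over $R_1$ is again strict, so the log morphism $\varphi$, whose underlying map factors through $\Gamma_{\BB, R}^{(1)}$, factors through $(\Gamma_{\BB, R}^{(1)})^\dagger$, and both $(\ZZ^{(1)})^\dagger$ and $(\Gamma_{\BB, R}^{(1)})^\dagger$ are log smooth over $(\Spec R_1)^\dagger$ by base change of the log smoothness recorded in Section \ref{sLogFrobenius}. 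Finally, by the definition of an admissible factorisation $\alpha_{\EE_i, k}$ is the relative Frobenius of $\EE_{i, k}/k$, and relative Frobenius commutes with fibre products over $k$, so $\alpha_k = \alpha_{\EE_1, k} \times_k \alpha_{\EE_2, k}$ is $\mathrm{Fr}_{\sA_k/k}\colon \sA_k \to \sA_k^{(p)} = \BB_k$; since $k$ is perfect and $\Gamma_{\BB, k}$ is reduced, the reduced preimage $\overline{\Gamma}_{\sA, k}$ of $\Gamma_{\BB, k}$ under $\alpha_k$ is canonically the Frobenius twist $\Gamma_{\BB, k} = \overline{\Gamma}_{\sA, k}^{(p)}$, and $\alpha_k|_{\overline{\Gamma}_{\sA, k}} = \mathrm{Fr}_{\overline{\Gamma}_{\sA, k}/k}$ by functoriality of the relative Frobenius; these identifications persist at the level of log structures by means of the explicit charts of Section \ref{sLogFrobenius}. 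Hence $\Phi$ reduces modulo $p$ to the relative log Frobenius of $\overline{\Gamma}_{\sA, k}^\dagger$ over the standard log point, which is exactly the asserted first order infinitesimal lifting of the relative Frobenius.
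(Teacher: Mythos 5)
Your proposal is essentially the paper's argument: define $W := \varphi^{-1}(\Gamma_{\BB,R}^{(1)})$, use Proposition \ref{pRotationalSymmetry} to get $R_1$-points of $W$ whose specialisations are Zariski dense in each component of $\ZZ_k = \overline{\Gamma}_{\sA,k}$, and conclude $W = \ZZ^{(1)}$. The paper invokes \cite[11.10.9]{EGAIV} at the last step (scheme-theoretic density of a family of sections), whereas you work out the ideal-theoretic argument by hand via the flatness identification $p\,\OO_{\ZZ^{(1)}} \isom \OO_{\ZZ_k}$; the two are equivalent, and your explicit version is a useful unpacking. Your concluding paragraph on strictness of $\Gamma_{\BB,R}^\dagger \hookrightarrow \BB^\dagger$ and the identification of $\alpha_k|_{\overline{\Gamma}_{\sA,k}}$ with the relative (log) Frobenius makes precise what the paper leaves to Lemma \ref{lLocalLifting} and the construction of $\overline{\Gamma}_{\sA,k}$.

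One justification is misstated. You write that the equivalence ``$z$ lies in $W$ iff its specialisation lies in $V(\mathcal{J})$'' holds ``since $p$ is a non-zero-divisor of $R_1$''. But $p$ is a zero-divisor in $R_1 = R/p^2R$ (indeed $p\cdot p = 0$). The equivalence is nonetheless correct, and for a closely related reason: the annihilator of $p$ in $R_1$ is exactly $pR_1$ (because $R$ is a DVR, so $pa \in p^2R$ forces $a \in pR$). Thus for $z^\# \colon \OO_{\ZZ^{(1)}} \to R_1$, writing $\II_W = p\widetilde{\mathcal{J}}$ with $\widetilde{\mathcal{J}}$ the full preimage of $\mathcal{J}$, one has $z^\#(\II_W) = 0$ iff $p\,z^\#(\widetilde{\mathcal{J}}) = 0$ iff $z^\#(\widetilde{\mathcal{J}}) \subseteq pR_1$ iff the reduction $z_0^\#$ kills $\mathcal{J}$, i.e.\ $z_0 \in V(\mathcal{J})$. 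With this correction the argument is sound. (Also note the first step, $W_0 = \ZZ_k$, already follows without density, simply because $\varphi_0 = \alpha_k|_{\ZZ_k}$ factors through $\Gamma_{\BB,k}$ by the very definition of $\overline{\Gamma}_{\sA,k}$ as a reduced preimage; the density is only really needed for the mod-$p^2$ step.)
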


\begin{proof}
This is the analogue in the log setting of \cite[Lemma I.5.3]{Ray83-2}. We wish to show that the morphism $\phi$ of Lemma \ref{lLocalLifting} factors through the closed subscheme $(\Gamma_{\BB, R}^{(1)})^{\dagger}$ in $(\BB^{(1)})^{\dagger}$. We denote by $\widetilde{\ZZ^{(1)}}$ the closed subscheme of $\ZZ^{(1)}$ that we obtain when we pull back $\Gamma_{\BB, R}^{(1)}$ have via $\varphi$. We want to show that $\widetilde{\ZZ^{(1)}} = \ZZ^{(1)}$ and for that it suffices to show that $\widetilde{\ZZ^{(1)}}$ is schematically dense in $\ZZ^{(1)}$. Since we assume that the image of $\Lambda \cap (\Gamma_{\BB, R}^{\circ})^{(1)} (R_1)$ in $\Gamma_{\BB, k}^{\circ}(k)$ is infinite, this image is infinite in every irreducible component of $\Gamma_{\BB, k}$ by Proposition \ref{pRotationalSymmetry}. Therefore there is a set of sections in $\widetilde{\ZZ^{(1)}} (R_1)$ with Zariski dense image in every irreducible component of the special fibre of $\ZZ$, which is $\overline{\Gamma}_{\sA, k}$. Then $\widetilde{\ZZ^{(1)}}$ is schematically dense in $\ZZ^{(1)}$ by \cite[11.10.9]{EGAIV}. 
\end{proof}

We now want to show that there is no lifting of Frobenius as in Lemma \ref{lFrobeniusLifting}, showing the finiteness of the image of $p\Sigma \cap \left( (\Gamma_{\CC, R}^{\circ})^{(1)} (R_1) \right)$ in $\Gamma_{\CC, k}^{\circ}(k)$ under our assumptions. 

\begin{lemma}\label{lFrobDoesNotLift}
Suppose $C^{\dagger} \to (\mathrm{Spec}\, R_1)^{\dagger}$ and $D^{\dagger} \to (\mathrm{Spec}\, R_1)^{\dagger}$ are log smooth curves, and denote by $C_0^{\dagger} \to (\mathrm{Spec}\, k)^{\dagger}$ and $D_0^{\dagger} \to (\mathrm{Spec}\, k)^{\dagger}$ their central fibres, which are the base changes to the standard log point. Assume $D_0$ is the Frobenius twist of $C_0$. Suppose there is a nonsingular component $D_0'$ of $D_0$ on which
\[
\omega_{D_0}\left( \sum_{i=1}^n x_i\right) 
\]
has positive degree, where $x_1, \dots , x_n$ are the double points of $D_0$ or log marked points lying on $D_0'$ as in \cite[Example 3.26 and Examples 3.36 (6)]{Gross11}. 
Suppose also that on the corresponding component $C_0'$ of $C_0$ there is a matching number $y_1, \dots , y_n$ of double points or log marked points. 
Then there is no 
first order infinitesimal lifting of the relative Frobenius  
\[
\xymatrix{
C^{\dagger} \ar[rr]^{\Phi}\ar[rd]  &  & D^{\dagger} \ar[ld] \\
 & (\mathrm{Spec}\, R_1)^{\dagger} & .
}
\]
\end{lemma}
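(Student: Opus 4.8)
The plan is to transport Raynaud's argument \cite[Lemma I.5.4]{Ray83-2} into the logarithmic setting, with the ordinary Cartier operator replaced by its logarithmic counterpart. So I assume for contradiction that a first order lift $\Phi\colon C^{\dagger}\to D^{\dagger}$ of the relative Frobenius exists over $(\mathrm{Spec}\,R_1)^{\dagger}$, and analyse the induced map on log differentials. Since $C^{\dagger}$ and $D^{\dagger}$ are log smooth curves, the sheaves $\Omega^1_{C^{\dagger}/(\mathrm{Spec}\,R_1)^{\dagger}}$ and $\Omega^1_{D^{\dagger}/(\mathrm{Spec}\,R_1)^{\dagger}}$ are line bundles, and I first consider
\[
d\Phi\colon \Phi^{*}\Omega^1_{D^{\dagger}/(\mathrm{Spec}\,R_1)^{\dagger}}\longrightarrow \Omega^1_{C^{\dagger}/(\mathrm{Spec}\,R_1)^{\dagger}}.
\]
Its reduction modulo $p$ is the differential of the relative log Frobenius $F\colon C_0^{\dagger}\to D_0^{\dagger}$, which vanishes (in a log coordinate $u$ one has $F^{*}(d\log u)=p\,d\log u$ and $F^{*}(du)=pu^{p-1}du$, both $\equiv 0 \bmod p$). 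Hence $d\Phi$ factors through $p\,\Omega^1_{C^{\dagger}/(\mathrm{Spec}\,R_1)^{\dagger}}$, and dividing by $p$ and reducing mod $p$ — using the canonical identification $p\,\Omega^1_{C^{\dagger}}/p^{2}\,\Omega^1_{C^{\dagger}}\cong \Omega^1_{C_0^{\dagger}/(\mathrm{Spec}\,k)^{\dagger}}$ — yields an $\OO_{C_0}$-linear morphism of line bundles
\[
\zeta\colon F^{*}\Omega^1_{D_0^{\dagger}/(\mathrm{Spec}\,k)^{\dagger}}\longrightarrow \Omega^1_{C_0^{\dagger}/(\mathrm{Spec}\,k)^{\dagger}}.
\]

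The heart of the matter, and the step I expect to be the main obstacle, is to show that $\zeta$ is \emph{nonzero on the component} $C_0'$. This is the logarithmic analogue of \cite[Lemma I.5.3]{Ray83-2}: by adjunction for the finite morphism $F$ together with the projection formula, $\zeta$ corresponds to a morphism $\bar\zeta\colon\Omega^1_{D_0^{\dagger}}\to F_{*}\Omega^1_{C_0^{\dagger}}$ whose image consists of log-closed forms, and the local computation underlying Raynaud's lemma (it only uses the effect of $p^{-1}d\Phi$ on local coordinates) should show that the composite of $\bar\zeta$ with the logarithmic Cartier operator $\mathcal C$ is multiplication by a unit — here one invokes the logarithmic Cartier isomorphism, available by \cite{Kato96}, \cite{Ogus18}. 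Restricting this identity to the nonsingular component $D_0'$ gives $\bar\zeta|_{D_0'}\neq 0$, and since $F^{-1}(D_0')$ has reduced support $C_0'=(D_0')^{(p)}$ (the relative Frobenius being a universal homeomorphism), it follows that $\zeta$ restricts to a \emph{nonzero} map over $C_0'$. The delicate points here are purely at the nodes of $C_0$ and $D_0$: one must check that the logarithmic Cartier operator is genuinely available in this semistable situation, that the naive de Rham complex must be replaced by the log de Rham complex, and that $F_{*}$ is compatible with restriction to an irreducible component.

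Finally I would derive the contradiction by a degree count on the projective curve $C_0'$. The line bundle $\Omega^1_{D_0^{\dagger}}$ restricts on $D_0'$ to $\omega_{D_0}\bigl(\sum_{i=1}^{n}x_i\bigr)$ in the notation of the statement, which has positive degree by hypothesis; likewise $\Omega^1_{C_0^{\dagger}}$ restricts on $C_0'$ to a line bundle of the \emph{same} degree, because $C_0'$ is the Frobenius twist of $D_0'$ and carries the matching family $y_1,\dots,y_n$ of double points and log marked points. As $F|_{C_0'}\colon C_0'\to D_0'$ has degree $p$, the line bundle
\[
\mathcal{H}om_{\OO_{C_0'}}\!\bigl(F^{*}(\Omega^1_{D_0^{\dagger}}|_{D_0'}),\ \Omega^1_{C_0^{\dagger}}|_{C_0'}\bigr)
\]
has degree $(1-p)\cdot\deg_{D_0'}\!\bigl(\omega_{D_0}(\textstyle\sum_i x_i)\bigr)<0$ (recall $p\ge 3$ here), so it has no nonzero global section. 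This contradicts the nonvanishing of $\zeta|_{C_0'}$ established in the previous step, and the lemma follows. The remaining ingredients (the vanishing of $dF$, the $p$-divisibility of $d\Phi$, the numerical computation) are formally identical to the classical, non-logarithmic case.
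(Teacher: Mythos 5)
Your proposal is correct and reproduces the paper's own argument essentially step for step: form $d\Phi$, observe that its reduction modulo $p$ vanishes, divide by $p$ to obtain a map $\zeta$ of log cotangent line bundles on the central fibre, use the logarithmic Cartier operator to show $\zeta$ is nonzero, identify the restriction of the log cotangent bundle to the relevant component with $\omega_{D_0}(\sum x_i)$ via \cite[Examples 3.36 (6)]{Gross11}, and conclude by the impossibility of a nonzero morphism of line bundles of degree $pd$ to one of degree $d$ on $C_0'$ when $d>0$. (Minor slip: since $D_0$ is the Frobenius twist of $C_0$, one has $D_0'=(C_0')^{(p)}$ rather than $C_0'=(D_0')^{(p)}$, but this does not affect the degree count over the perfect field $k$.)
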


\begin{proof}
We argue similarly to \cite[Lemma I.5.4]{Ray83-2}. First, since $C^{\dagger} \to (\mathrm{Spec}\, R_1)^{\dagger}$ and $D^{\dagger} \to (\mathrm{Spec}\, R_1)^{\dagger}$ are log smooth curves, the sheaves of log differentials $\Omega^1_{C^{\dagger} / (\mathrm{Spec}\, R_1)^{\dagger}}$ and $\Omega^1_{D^{\dagger} / (\mathrm{Spec}\, R_1)^{\dagger}}$ are locally free of rank $1$, and in any event we have a natural functorial morphism of these lines bundles
\[
\Phi^* \colon \Phi^* \Omega^1_{D^{\dagger} / (\mathrm{Spec}\, R_1)^{\dagger}} \to \Omega^1_{C^{\dagger} / (\mathrm{Spec}\, R_1)^{\dagger}},
\]
cf. \cite[p. 115, 116]{Gross11}. Since the differential of the restriction of $\Phi$ to the central fibre, $\Phi_0$, is zero, this morphism of line bundles $\Phi^*$ factors through $p \Omega^1_{C^{\dagger} / (\mathrm{Spec}\, R_1)^{\dagger}}$ and dividing by $p$, we get a morphism 
\[
\tau \colon \Phi^*_0  \Omega^1_{D_0^{\dagger} / (\mathrm{Spec}\, k)^{\dagger}} \to \Omega^1_{C_0^{\dagger} / (\mathrm{Spec}\, k)^{\dagger}}
\]
or, what is the same thing by adjunction, a morphism
\[
\tau' \colon \Omega^1_{D_0^{\dagger} / (\mathrm{Spec}\, k)^{\dagger}} \to (\Phi_0)_*\Omega^1_{C_0^{\dagger} / (\mathrm{Spec}\, k)^{\dagger}}. 
\]
Now both of these maps are nonzero because away from the log marked or double points of $C_0$, the Cartier operator furnishes an inverse to $\tau'$ as in \cite[p. 8, proof of Lemma I.5.4]{Ray83-2}. But now \cite[Examples 3.36 (6)]{Gross11} tells us that $\Omega^1_{D_0^{\dagger} / (\mathrm{Spec}\, k)^{\dagger}}$ restricted to $D_0'$ is nothing but $\omega_{D_0}\left( \sum_{i=1}^n x_i\right)$, which we assumed to have positive degree $d>0$, say. Then $\Phi^*_0  \Omega^1_{D_0^{\dagger} / (\mathrm{Spec}\, k)^{\dagger}}$ will have degree $pd$ on the corresponding component $C_0'$ of $C_0$ (which is just a Frobenius twist of $D_0$). This is a contradiction because $\Omega^1_{C_0^{\dagger} / (\mathrm{Spec}\, k)^{\dagger}}$ has the same degree $d$ when restricted to $C_0'$, but there cannot be a nonzero morphism from a line bundle of degree $pd$ to one of degree $d$ for $d>0$. 
\end{proof}

We can now finally put everything together and give the 

\begin{proof}[Proof of Theorem \ref{tFinitenessGeneralised}]
If the conclusion of the Theorem is wrong, then in particular, by Proposition \ref{pRotationalSymmetry} and Lemma \ref{lFiniteCheck}, we are in the case when Lemma \ref{lFrobeniusLifting} applies. But by Proposition \ref{pGeometryCurve}, each component of $\Gamma_{\BB , k}$ is a nonsingular curve of genus $3$. This is a contradiction to Lemma \ref{lFrobDoesNotLift}. 
\end{proof}

\end{document}